\documentclass[a4paper]{amsart}

\usepackage{graphicx} 
\usepackage{amsmath,amssymb,amsthm}
\usepackage{eulervm}
\usepackage{mathrsfs}
\usepackage{array}
\usepackage{comment}
\usepackage{color}
\usepackage{graphics}
\usepackage{tikz}
\usepackage{marvosym}
\usepackage{hyperref}
\usepackage[shortlabels]{enumitem}
\setlist{font=\normalfont}\usepackage{array}

\usepackage{tcolorbox}
\tcbuselibrary{theorems}

\theoremstyle{plain}
\newtheorem{theorem}{Theorem}

\newtheorem{corollary}[theorem]{Corollary}
\newtheorem{lemma}[theorem]{Lemma}
\newtheorem{proposition}[theorem]{Proposition}
\newtheorem{fact}[theorem]{Fact}
\theoremstyle{definition}
\newtheorem{remark}[theorem]{Remark}
\newtheorem{definition}[theorem]{Definition}
\newtheorem*{notation*}{Notation}
\newtheorem{example}[theorem]{Example}

\newtheorem{problem}[theorem]{Problem}

\renewcommand{\leq}{\leqslant}
\renewcommand{\geq}{\geqslant}
\renewcommand{\nleq}{\nleqslant}

\renewcommand{\phi}{\varphi}
\renewcommand{\rho}{\varrho}
\renewcommand{\epsilon}{\varepsilon}

\newcommand{\N}{\mathbb{N}} 

\newcommand{\FL}[1]{\operatorname{FL}_{#1}}         

\newcommand{\Con}[1]{\operatorname{Con}{#1}}        
\newcommand{\Eq}[1]{\operatorname{Eq}({#1})}        
\newcommand{\Set}[2]{\left\{{#1} : {#2}\right\}}      
  \newcommand{\PSet}[1]{\mathscr{P}(#1)}         

\newcommand{\tworepr}{$2$-rep\-re\-sent\-able}
\newcommand{\tworepry}{$2$-rep\-re\-sent\-abil\-i\-ty}

\newcommand{\twogood}{$2$-con-embedding}
\DeclareMathOperator{\spec}{Sp}

\newcommand{\av}{\underline{a}}

\newcommand{\ev}{\underline{e}}
\newcommand{\uv}{\underline{u}}
\newcommand{\vv}{\underline{v}}
\newcommand{\alg}{\mathbb{A}}
\newcommand{\abs}[1]{\left\lvert #1 \right\rvert}
\newcommand{\compl}[1]{\overline{#1}}

\newcolumntype{L}{>{$}l<{$}}  \newcolumntype{R}{>{$}r<{$}}  \newcolumntype{C}{>{$}c<{$}}

\newcounter{LCard}
\newcounter{LType}

\newcommand{\RLatticeItem}[2]{%
\ifnum\value{LCard}<4
  {{\bf\theLCard.}$^{(\text{#1})}$.}~{#2}\medskip\addtocounter{LCard}{1}
\else
  {{\bf\theLCard/\theLType.}$^{(\text{#1})}$}~{#2}\medskip\addtocounter{LType}{1} 
\fi}
\newcommand{\NRLatticeItem}[2]{%
  \textcolor{red}{{\bf\theLCard/\theLType.}$^{(\text{#1})}$}~#2\medskip\addtocounter{LType}{1}}

\begin{document} 

\title{Congruence lattices of $2$-valued unary algebras}

\author[M. Dormán]{Miklós Dormán}
\address{Bolyai Institute\\
     Aradi vértanúk tere 1.\\
     H-6720 Szeged\\
     Hungary}
\email{dorman@math.u-szeged.hu}

\author[K.~K\'atai-Urb\'an]{Kamilla~K\'atai-Urb\'an}
\address{Bolyai Institute\\
     Aradi vértanúk tere 1.\\
     H-6720 Szeged\\
     Hungary}
\email{katai@math.u-szeged.hu}

\author[E.~K.~Horv\'ath]{Eszter~K.~Horv\'ath}
\address{Bolyai Institute\\
     Aradi vértanúk tere 1.\\
     H-6720 Szeged\\
     Hungary}
\email{horeszt@math.u-szeged.hu}

\author[T.~Waldhauser]{Tam\'as~Waldhauser}
\address{Bolyai Institute\\
     Aradi vértanúk tere 1.\\
     H-6720 Szeged\\
     Hungary}
\email{twaldha@math.u-szeged.hu}


\dedicatory{Dedicated to Ivan Chajda on the occasion of his 80th birthday.}

\thanks{The research of the authors was partially supported by the National Research, Development and Innovation Office of Hungary under grant ADVANCED 153383.} 
\keywords{congruence lattices, partition lattices, representability of lattices, distributive lattices, Galois connections}
\subjclass{08A30,08A40,08A60,06B15}

\begin{abstract}
We study congruence lattices of finite algebras whose basic operations are unary and have exactly two values.
We prove that the class of finite lattices representable in this way is closed under taking filters, glued sums and ordinal sums, and direct products with finite distributive lattices, but it is not closed under taking ideals or arbitrary direct products. 
A classification of lattices with at most seven elements with respect to representability is also presented.

\end{abstract}

\maketitle


\section{Introduction}

The representation of lattices as congruence lattices is a classical problem of universal algebra.
Gr\"atzer and Schmidt \cite{GS63} proved that every algebraic lattice is isomorphic to the congruence lattice of an algebra.
Their theorem does not guarantee that the algebra is finite when the lattice is finite.
The abstract finite congruence lattice representation problem asks whether every finite lattice is isomorphic to the congruence lattice of a finite algebra.
For this problem, it is enough to consider unary algebras: replacing the basic operations of an algebra by all its unary polynomial operations does not change its congruence lattice \cite[Chapter~II, \S5, Exercise~7]{BS}.

There is also a concrete version of the representation problem.
Given a finite set $A$ and a $(0,1)$-sublattice $L$ of the lattice $\Eq{A}$ of equivalence relations on $A$, one asks whether $L$ is the congruence lattice of an algebra on $A$.
The preservation relation between unary operations and equivalence relations gives a Galois connection whose closed sets on the relational side are precisely the concrete congruence lattices.
This implies immediately that a $(0,1)$-sublattice of $\Eq{A}$ is a congruence lattice of some algebra on $A$ if and only if it is closed under primitive positive definability \cite[Corollary 2.2]{Snow00}.
This was observed earlier by Werner \cite{Wer76} in terms of graphical compositions.

Jakub\'\i{}kov\'a-Studenovsk\'a, P\"oschel and Radeleczki \cite{JSPR18} studied the lattice formed by all congruence lattices on a fixed finite set.
Subsequently, they described its maximal proper members in detail \cite{JSPR21}.
That paper also gives a description of the congruences of a monounary algebra as a union of intervals of the partition lattice.
Related Galois connections involving generalized quasiorders are studied in \cite{JSPR}.

In this paper we study the Galois connection induced by the preservation relation between equivalence relations and unary operations with exactly two values.
Constant operations preserve every equivalence relation, so two-element ranges give the first nontrivial case when restricting the range sizes of unary operations.
We write $T_2(A) = \big\{ f\colon A \to A : \abs{f(A)} = 2 \big\}$ and call an algebra $\alg=(A;F)$ with $F\subseteq T_2(A)$ a $2$-valued unary algebra.
The Galois closed classes of relations are then exactly the congruence lattices of $2$-valued unary algebras.

Our main result in Section~\ref{sect_closure} gives an explicit description of the Galois closure of a set of equivalence relations on a finite set $A$ as a union of certain intervals of $\Eq{A}$ determined by lattice terms in disjunctive and conjunctive normal forms (see Theorem~\ref{thm_int}).
This yields a solution to the concrete representation problem when restricted to $2$-valued unary algebras, which was already obtained by Bandelt \cite[Corollary~3]{Bandelt}.

Turning our attention to the abstract representation problem, we introduce the following terminology.

\begin{definition}\label{def_goodfi}
	We say that a finite lattice $L$ is \emph{\tworepr{}} if there is a finite set $A$ and an embedding $\varphi\colon L\hookrightarrow \Eq{A}$ such that the image of $\phi$ is the congruence lattice of an algebra $\alg=(A;F)$ with $F \subseteq T_2(A)$.
	In this case we say that $\phi$ is a \emph{\twogood{}}.
	Note that all \twogood{}s are $(0,1)$-embeddings.
\end{definition}

Every finite partition lattice is \tworepr{}, since $\Eq{A}=\Con{(A;\emptyset)}$.
Finite distributive lattices form another basic class of examples.
Quackenbush and Wolk \cite{QW71} proved that every distributive $(0,1)$-sublattice of $\Eq{A}$ is the congruence lattice of an algebra on $A$, and their construction uses only $2$-valued unary operations.
Pudl\'ak \cite{Pu77} proved that finite strongly representable lattices, i.e., lattices for which every $(0,1)$-embedding into a partition lattice gives a congruence lattice, must be distributive.
However, the characterization of \tworepr{} nondistributive lattices remains an open problem.

For unrestricted finite representability, closure under intervals, direct products, glued sums and ordinal sums is known; see \cite[Lemmas~3.4--3.6 and Corollary~3.7]{Snow00}.
We prove in Section~\ref{sect_constructions} that \tworepry{} is preserved by taking filters, glued sums and ordinal sums, and by taking the direct product with a finite distributive lattice.
On the other hand, closure under arbitrary direct products fails: for example, $M_3\times M_3$ is not \tworepr{}.
Nevertheless, if $L_1$ and $L_2$ are \tworepr{}, then $(L_1\times L_2)\oplus\mathbf{1}$ (i.e., the lattice obtained from $L_1\times L_2$ by adjoining an extra top element) is \tworepr{}.
Taking an ideal need not preserve \tworepry{}, but adding an extra top element to an ideal of a \tworepr{} lattice yields again a \tworepr{} lattice.
These results provide constructions of \tworepr{} lattices as well as obstructions to such representations.

In Section~\ref{sect_conclusion} we formulate some open problems related to \tworepry{}, and we also consider the possible sizes of $2$-valued unary algebras with a given congruence lattice.

We present the results of computer studies of small algebras and small lattices in the Appendix.
More precisely, in Section~\ref{sect_four-element-sets} we list the congruence lattices of $2$-valued unary algebras of at most four elements.
Section~\ref{sect_seven-element-lattices} contains a classification of lattices of size at most seven with respect to \tworepry{}.
For \tworepr{} lattices, a suitable $2$-valued unary algebra was found by computer; for the remaining lattices we give appropriate lattice terms in disjunctive and conjunctive normal forms that contradict \tworepry{} according to Theorem~\ref{thm_int}.

\section{Preliminaries and notation}

\subsection{General notational conventions}

The set of natural numbers is denoted by $\N$ (zero is not included).
For $n \in \N$, we use the notation $[n] = \{1,2,\dots,n\}$, and $\PSet{[n]}$ stands for the power set of $[n]$.
Letters $a,b,c$ refer to elements of a set $A$, and the letters $u,v,w$ refer to elements of a lattice $L$.
Underlined letters denote tuples; if not indicated otherwise, tuples are of length $n$, and their entries are indexed by $1,\dots,n$, such as in $\av=(a_1,\dots,a_n) \in A^n$ and $\uv=(u_1,\dots,u_n) \in L^n$.

\subsection{Lattices}

We consider finite lattices; these lattices are always bounded: there is a least (bottom) element $0_L$ and a greatest (top) element $1_L$.
Sublattices containing these boundary elements are called $(0,1)$-sublattices, and, similarly, homomorphisms preserving the boundary elements are referred to as $(0,1)$-homomorphisms.
The embeddings used to represent lattices as congruence lattices will always preserve $0$ and $1$.
The elements covering $0_L$ (covered by $1_L$) are called the atoms (coatoms) of $L$.
The interval determined by the elements $u,v \in L$ is the set $[u;v]_L = \Set{w \in L}{u \leq w \leq v} \subseteq L$; of course, this interval is nonempty only if $u \leq v$.
Intervals of the form $[0_L;u]_L$ and $[u;1_L]_L$ are called principal ideals and principal filters, respectively.
The (isomorphism types of) one- and two-element lattices are denoted by $\mathbf{1}$ and $\mathbf{2}$.

\begin{definition}\label{def_ordsum}
	The \emph{ordinal sum} $L_1\oplus L_2$ of two lattices is the lattice obtained by putting $L_2$ on top of $L_1$. More precisely, the underlying set of the ordinal sum is the disjoint union $L_1\mathop{\dot\cup} L_2$; the order is defined by keeping the original orders within $L_1$ and $L_2$ and declaring $u<v$ whenever $u\in L_1$ and $v\in L_2$.
	The \emph{glued sum} $L_1\oplus_w L_2$ of two bounded lattices is defined in a similar manner, but here the greatest element of $L_1$ is identified with the least element of $L_2$, and $w$ denotes this identified element.
	Figure~\ref{fig_ordinal-sum} illustrates these two constructions.
\end{definition}

\begin{figure}[ht]
	\centering
	\begin{tikzpicture}[every node/.style={font=\small}]
	
	\tikzset{
	    latticeoutline/.style={thick},
	    extremum/.style={circle, fill=black, inner sep=1.2pt}
	}
	
	\begin{scope}[xshift=0cm]
	
	    \draw[latticeoutline]
	        (0,0) .. controls (-0.95,0.7) and (-0.95,1.5) .. (0,2.2)
	              .. controls ( 0.95,1.5) and ( 0.95,0.7) .. (0,0);
	    \node at (0,1.1) {$L_1$};
	
	    \draw[latticeoutline]
	        (0,2.9) .. controls (-0.95,3.6) and (-0.95,4.4) .. (0,5.1)
	                .. controls ( 0.95,4.4) and ( 0.95,3.6) .. (0,2.9);
	    \node at (0,4.0) {$L_2$};
	
	    \draw[latticeoutline] (0,2.2) -- (0,2.9);
	
	    \node[extremum, label=left:{$0_{L_1}$}] at (0,0) {};
	    \node[extremum, label=left:{$1_{L_1}$}] at (0,2.2) {};
	    \node[extremum, label=left:{$0_{L_2}$}] at (0,2.9) {};
	    \node[extremum, label=left:{$1_{L_2}$}] at (0,5.1) {};
	
	    \node at (0,-0.9) {$L_1\oplus L_2$};
	
	\end{scope}
	
	\begin{scope}[xshift=7cm]
	
	    \draw[latticeoutline]
	        (0,0) .. controls (-0.95,0.7) and (-0.95,1.5) .. (0,2.2)
	              .. controls ( 0.95,1.5) and ( 0.95,0.7) .. (0,0);
	    \node at (0,1.1) {$L_1$};
	
	    \draw[latticeoutline]
	        (0,2.2) .. controls (-0.95,2.9) and (-0.95,3.7) .. (0,4.4)
	                .. controls ( 0.95,3.7) and ( 0.95,2.9) .. (0,2.2);
	    \node at (0,3.3) {$L_2$};
	
	    \node[extremum, label=left:{$0_{L_1}$}] at (0,0) {};
	    \node[extremum, label=right:{$w=1_{L_1}=0_{L_2}$}] at (0,2.2) {};
	    \node[extremum, label=left:{$1_{L_2}$}] at (0,4.4) {};
	
	    \node at (0,-0.9) {$L_1\oplus_w L_2$};
	
	\end{scope}
	
	\end{tikzpicture}
	\caption{The ordinal sum $L_1\oplus L_2$ and the glued sum $L_1\oplus_w L_2$.}
	\label{fig_ordinal-sum}
\end{figure}

\begin{remark}\label{rem_oplus}
	The following connection holds between the ordinal sum and the glued sum: $(L_1\oplus_v\mathbf{2} )\oplus_w L_2\cong L_1\oplus L_2$, where $v$ and $w$ denote the identified elements.
\end{remark}

\subsection{Disjunctive and conjunctive normal forms}\label{subsect_DNF}

Let $\FL{n}$ denote the free bounded lattice on $n$ free generators; in other words, $\FL{n}$ is the set of all lattice terms in the variables $x_1,\dots,x_n$.
We allow the empty meet $\bigwedge \emptyset = 1$ and the empty join $\bigvee \emptyset = 0$; this will not cause any problems, as we consider finite (hence bounded) lattices.
\begin{definition}[{\cite[Chapter~V, \S4.1, Theorem~404]{Gratzer11}}]
	A lattice term is said to be in \emph{disjunctive normal form} (DNF) if it is of the following form for some $\mathcal{M}\subseteq \PSet{[n]}$:
	\[
		t^-_{\mathcal{M}} = \bigvee_{M\in\mathcal{M}}\bigwedge_{i\in M}x_i \in \FL{n}.
	\]
	Similarly, the \emph{conjunctive normal form} (CNF) corresponding to $\mathcal{N}\subseteq \PSet{[n]}$ is the lattice term
	\[
		t^+_{\mathcal{N}} = \bigwedge_{N\in\mathcal{N}}\bigvee_{i\in N}x_i \in \FL{n}.
	\]
\end{definition}
Different set systems $\mathcal{M}$ may induce disjunctive normal forms that are equivalent over every bounded lattice; however, if we assume that $\mathcal{M}$ is an upset (i.e., $M \in \mathcal{M}$ and $M \subseteq M' \subseteq [n]$ imply $M' \in \mathcal{M}$), then we can recover $\mathcal{M}$ from the term operation by evaluating it at special tuples.
Similarly, there is a bijective correspondence between upsets $\mathcal{N}\subseteq \PSet{[n]}$ and conjunctive normal forms $t^+_{\mathcal{N}}$.
To see this, let us define the tuple $\ev_{I}\in L^n$ for a lattice $L$ and $I\subseteq [n]$ by
\[
  e_i=\begin{cases}
    1_L, & \text{ if } i\in I,\\
    0_L, & \text{ if } i\notin I.
  \end{cases}
\]
If $\mathcal{M}$ and $\mathcal{N}$ are upsets, then
\begin{equation}\label{equ_t(e_I)}
t^-_\mathcal{M}(\ev_I)=\begin{cases}
  1_L, &\text{ if } \ I\in \mathcal{M},\\
  0_L, &\text{ if } \ I\notin \mathcal{M},	
\end{cases}
\qquad
t^+_\mathcal{N}(\ev_I)=\begin{cases}
  1_L, &\text{ if } \ \compl{I}\notin \mathcal{N},\\
  0_L, &\text{ if } \ \compl{I}\in \mathcal{N},
\end{cases}
\end{equation}
where $\compl{I} = [n] \setminus I$ denotes the complement of $I$.

Over distributive lattices, every term is equivalent to a DNF, and we shall denote the DNF corresponding to $t \in \FL{n}$ by $t^-$ or $t^-_{\mathcal{M}}$; the latter notation also indicates the underlying upset $\mathcal{M}\subseteq \PSet{[n]}$.
Similarly, for every $t \in \FL{n}$, there is a unique upset $\mathcal{N}\subseteq \PSet{[n]}$ such that the identity $t \approx t^+_{\mathcal{N}}$ holds in every bounded distributive lattice.
The notation is motivated by the fact that $t^-$ and $t^+$ are the least, respectively, greatest elements of the free bounded lattice $\FL{n}$ that are equivalent to $t$ over the variety of bounded distributive lattices.
In the next proposition we prove only the weaker statement $t^- \leq t^+$, as we only need this in the sequel, and we also describe the connection between the upsets $\mathcal{M}$ and $\mathcal{N}$ corresponding to the DNF $t^-_{\mathcal{M}}$ and to the CNF $t^+_{\mathcal{N}}$ of the same term $t \in \FL{n}$.

\begin{proposition}\label{prop_MN}
	For all upsets $\mathcal{M}, \mathcal{N}\subseteq \PSet{[n]}$, the following hold.
	\begin{enumerate}[(i)]
		\item\label{item_MN-identity} The identity $t^-_{\mathcal{M}}\approx t^+_{\mathcal{N}}$ holds in the variety of bounded distributive lattices if and only if $\mathcal{N}=\Set{ \compl{I} }{ I\notin\mathcal{M} }$.
		\item\label{item_MN-inequality} If $\mathcal{N}=\Set{ \compl{I} }{ I\notin\mathcal{M} }$, then the inequality $t^-_{\mathcal{M}}\leq t^+_{\mathcal{N}}$ holds in the variety of all bounded lattices. 
	\end{enumerate}
\end{proposition}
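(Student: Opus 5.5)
For part (i), I will test the identity on the two-element lattice $\mathbf{2}$ and then use that $\mathbf{2}$ generates the variety of bounded distributive lattices. The identity $t^-_{\mathcal{M}}\approx t^+_{\mathcal{N}}$ holds in every bounded distributive lattice if and only if it holds in $\mathbf{2}$. This is the standard fact that every distributive lattice is a subdirect power of $\mathbf{2}$, with the bounds preserved when we restrict to $(0,1)$-homomorphisms onto $\mathbf{2}$. Every tuple in $\mathbf{2}^n$ has the form $\ev_I$ for a unique $I\subseteq[n]$. So by \eqref{equ_t(e_I)} the identity holds in $\mathbf{2}$ if and only if, for every $I\subseteq[n]$,
\[
I\in\mathcal{M} \iff \compl{I}\notin\mathcal{N}.
\]
Equivalently, $\compl{I}\in\mathcal{N}$ holds exactly when $I\notin\mathcal{M}$. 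Since complementation is a bijection of $\PSet{[n]}$, this says precisely that $\mathcal{N}=\Set{\compl{I}}{I\notin\mathcal{M}}$.

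One small point needs checking: this $\mathcal{N}$ must actually be an upset. If $I\notin\mathcal{M}$ and $J\subseteq I$, then $J\notin\mathcal{M}$, because $\mathcal{M}$ is an upset. Hence the family $\Set{\compl{I}}{I\notin\mathcal{M}}$ is closed upward, so it is a legitimate choice of $\mathcal{N}$. The upset hypothesis is also exactly what makes \eqref{equ_t(e_I)} valid, so I will cite it there.

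For part (ii), I will argue directly in an arbitrary bounded lattice. Since $t^-_{\mathcal{M}}$ is a join and $t^+_{\mathcal{N}}$ is a meet, it suffices to show the following: for every $M\in\mathcal{M}$ and every $N\in\mathcal{N}$,
\[
\bigwedge_{i\in M}x_i\leq\bigvee_{i\in N}x_i .
\]
This inequality holds in every lattice as soon as $M\cap N\neq\emptyset$. Indeed, for any $i\in M\cap N$ we have
\[
\bigwedge_{i\in M}x_i\leq x_i\leq\bigvee_{i\in N}x_i .
\]
So the key step is the disjointness claim: $M\cap N=\emptyset$ is impossible. Write $N=\compl{I}$ with $I\notin\mathcal{M}$. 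If $M\cap N=\emptyset$, then $M\subseteq I$. Since $\mathcal{M}$ is an upset, this forces $I\in\mathcal{M}$, a contradiction.

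The empty cases are covered automatically. If $\mathcal{M}=\emptyset$, the left side is $0$; if $\mathcal{N}=\emptyset$, the right side is $1$. The case $M=\emptyset$ or $N=\emptyset$ is also handled by the intersection argument. The only real obstacle is getting the complement and upset bookkeeping right in both parts; the argument itself is elementary.
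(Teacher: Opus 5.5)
Your proposal is correct and follows the paper's proof essentially step by step. Part (i) evaluates at the tuples $\ev_I$ in the generating lattice $\mathbf{2}$ via \eqref{equ_t(e_I)}, and part (ii) reduces to $\bigwedge_{i\in M}x_i\leq\bigvee_{i\in N}x_i$ and uses the upset property of $\mathcal{M}$ to get $M\cap N\neq\emptyset$. Your extra check that $\Set{\compl{I}}{I\notin\mathcal{M}}$ is itself an upset is a harmless addition the paper leaves implicit; in your displayed chain, rename the chosen index (e.g.\ to $j$, as the paper does) so it does not clash with the bound variable $i$.
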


\begin{proof}\ 
\begin{enumerate}[(i)]
\item Since the variety of bounded distributive lattices is generated by the two-element lattice $\mathbf{2}$, the identity $t^-_{\mathcal{M}}\approx t^+_{\mathcal{N}}$ holds in every distributive lattice if and only if $t^-_{\mathcal{M}}(\ev_I)=t^+_{\mathcal{N}}(\ev_I)$ for every $I\subseteq [n]$. According to \eqref{equ_t(e_I)}, this means that $I\in \mathcal{M}$ is equivalent to $\compl{I}\notin \mathcal{N}$.
\item The inequality 
\[
 \bigvee_{M\in\mathcal{M}}\bigwedge_{i\in M}x_i
\leq
\bigwedge_{\compl{N}\notin\mathcal{M}}\bigvee_{i\in N}x_i
\]
holds if and only if for every $M\in\mathcal{M}$ and every 
$\compl{N}\notin\mathcal{M}$, we have
$
\bigwedge_{i\in M}x_i \leq \bigvee_{i\in N}x_i.
$
Since $\mathcal{M}$ is an upset, $M\in\mathcal{M}$ and 
$\compl{N}\notin\mathcal{M}$ imply that $M\cap N\neq\emptyset$.
Hence there exists $j\in M\cap N$, and for this $j$ we have
$
\bigwedge_{i\in M}x_i \leq x_j \leq \bigvee_{i\in N}x_i.
\qedhere
$
\end{enumerate}
\end{proof}

\begin{example}\label{example_median}
For $n=3$ and $\mathcal{M} = \big\{ \{1,2\}, \, \{1,3\}, \, \{2,3\}, \, \{1,2,3\} \big\}$, the upset $\mathcal{N}$ corresponding to $\mathcal{M}$ according to Proposition~\ref{prop_MN} is $\mathcal{M}$ itself.
The DNF and CNF induced by this upset are the ternary lower and upper median terms denoted by $m^-$ and $m^+$, respectively: 
	\begin{align*}
		m^-(x,y,z)&:=t^-_{\mathcal{M}}(x,y,z)=(x\wedge y)\vee(x\wedge z)\vee(y\wedge z),\\
		m^+(x,y,z)&:=t^+_{\mathcal{N}}(x,y,z)=(x\vee y)\wedge(x\vee z)\wedge(y\vee z).
	\end{align*}
\end{example}

\subsection{Equivalence relations}

The set of equivalence relations on a set $A$ is a lattice, denoted by $\Eq{A}$; we call these lattices partition lattices (with a bit of abuse of terminology), because of the natural correspondence between equivalence relations and partitions.
The least element of $\Eq{A}$ is the equality relation, while its greatest element is the total relation $A \times A$. 
We use the simplified notation $0_A = 0_{\Eq{A}}$ and $1_A = 1_{\Eq{A}}$ for these two relations.

The equivalence class, or block, of $a \in A$ with respect to $\rho \in \Eq{A}$ is $a/\rho = \Set{b \in A}{(a,b) \in \rho}$, and the set of equivalence classes, i.e., the partition corresponding to $\rho$, is denoted by $A/\rho$.
The atoms of $\Eq{A}$ are those equivalence relations that have only one non-singleton block and this block has exactly two elements.
On the other hand, the coatoms are those equivalences that have exactly two blocks.
Finite partition lattices are atomistic and coatomistic: every element of $\Eq{A}$ is the join of the atoms below it and also the meet of the coatoms above it. 

We will need the following description of the structure of intervals in partition lattices. 
\begin{fact}\label{fact_intervals-in-Eq(A)}
	Let $\rho,\sigma \in \Eq{A}$ such that $\rho \leq \sigma$ and $A/\sigma = \{A_1,\dots,A_t\}$.
	Then we have 
	\[
		[\rho;\sigma]_{\Eq{A}} \cong \Eq{[m_1]} \times \dots \times \Eq{[m_t]},
	\] 
	where $m_i$ is the number of blocks of $\rho$ within $A_i\ (i=1,\dots,t)$.
\end{fact}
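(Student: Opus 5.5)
We will construct an explicit isomorphism from $[\rho;\sigma]_{\Eq{A}}$ onto $\Eq{[m_1]} \times \dots \times \Eq{[m_t]}$. It is built in two steps: first pass to the quotient by $\rho$, then split the result along the blocks of $\sigma$.

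For the first step, let $B = A/\rho$ and consider the map $\theta \mapsto \theta/\rho$, where $\theta/\rho = \Set{(a/\rho,\, b/\rho)}{(a,b)\in\theta}$. This is the standard correspondence theorem for equivalence relations. For $\theta \geq \rho$, the relation $\theta/\rho$ is an equivalence on $B$, and it is well defined because $\theta$ is a union of $\rho$-blocks in each coordinate. Its inverse sends $\tau \in \Eq{B}$ to $\Set{(a,b)}{(a/\rho,b/\rho)\in\tau}$. Both maps are monotone, so $[\rho;1_A]_{\Eq{A}} \cong \Eq{B}$ as posets, hence as lattices. Under this isomorphism $\sigma$ corresponds to $\bar\sigma=\sigma/\rho$. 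The blocks of $\bar\sigma$ are $B_i = \Set{a/\rho}{a\in A_i}$, and by assumption $\abs{B_i}=m_i$. Restricting the isomorphism gives $[\rho;\sigma]_{\Eq{A}} \cong [0_B;\bar\sigma]_{\Eq{B}}$.

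For the second step, map $\tau \leq \bar\sigma$ to the tuple $(\tau\cap B_1^2,\dots,\tau\cap B_t^2)\in \Eq{B_1}\times\dots\times\Eq{B_t}$. This map is injective because $\tau\subseteq\bar\sigma=\bigcup_i B_i^2$, so $\tau=\bigcup_i(\tau\cap B_i^2)$. It is surjective because a union of equivalences on the disjoint sets $B_i$ is an equivalence on $B$ lying below $\bar\sigma$. Both the map and its inverse preserve inclusion, so we obtain a lattice isomorphism. Finally, compose with $\Eq{B_i}\cong\Eq{[m_i]}$, induced by any bijection $B_i\to[m_i]$.

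No step is a real obstacle. The only point needing care is the order-isomorphism claims, and these follow because an order isomorphism between lattices is automatically a lattice isomorphism. This lets us avoid computing joins, which are transitive closures, explicitly.
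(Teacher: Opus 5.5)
Your proof is correct and complete. The paper gives no proof of this statement (it is recorded as a standard fact about partition lattices), and your route is the usual justification: the correspondence $[\rho;1_A]_{\Eq{A}}\cong\Eq{A/\rho}$, then splitting the principal ideal $[0_B;\bar\sigma]_{\Eq{B}}$ along the blocks of $\bar\sigma$, using that order isomorphisms between lattices are lattice isomorphisms.

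One point of wording: the hypothesis $\rho\leq\theta$ is not needed for well-definedness of $\theta/\rho$, which is just an image set. It is needed for its transitivity and for the identity $\theta=\Set{(a,b)}{(a/\rho,b/\rho)\in\theta/\rho}$. Both follow from the saturation property you mention, so this does not affect the argument.
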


\subsection{Algebras and congruences}

The set of all transformations (unary operations) on a set $A$ (i.e., maps $f\colon A \to A$) is denoted by $T(A)$, while the set of all $2$-valued transformations is denoted by $T_2(A)$, that is, 
\[
	T_2(A)=\big\{ f\in T(A) : \abs{f(A)}=2 \big\}.
\]
By a $2$-valued unary algebra we mean an algebra $\alg=(A;F)$ with $F \subseteq T_2(A)$.

\begin{definition}
	The unary operation $f \in T(A)$ \emph{preserves} the equivalence relation $\rho \in \Eq{A}$ if $f(\rho) \subseteq \rho$, i.e., 
	\[
		\forall a,b \in A\colon\; (a,b) \in \rho \implies (f(a),f(b)) \in \rho.
	\]
	We denote this fact by $\rho \lhd f$, and we extend this notation to a set $F \subseteq T(A)$ of operations naturally: $\rho \lhd F$ means that $\rho \lhd f$ for all $f \in F$.
	We say that $\rho$ is a \emph{congruence} of the algebra $\alg=(A;F)$ if $\rho \lhd F$.
	The set of all congruences of $\alg$ is denoted by $\Con{\alg}$; note that $\Con{\alg}$ is a $(0,1)$-sublattice of $\Eq{A}$.
\end{definition}

The preservation relation $\lhd$ induces a Galois connection between ($2$-valued) transformations and equivalence relations on a given set $A$, where the Galois closed sets on the ``relational side'' are exactly the congruence lattices of ($2$-valued unary) algebras.
For this Galois connection, we use the following notation: for any $H \subseteq \Eq{A}$, let 
\begin{align*}
  H^*       &= \Set{ f\in T(A) }{ \rho\lhd f \text{ for all } \rho\in H },\\
  H^{\circledast} &= \Set{ f\in T_2(A) }{ \rho\lhd f \text{ for all } \rho\in H }
           = H^*\cap T_2(A)\subseteq H^*,\\
  H^{**}      &= \Set{ \sigma\in\Eq{A} }{ \sigma\lhd f \text{ for all } f\in H^{*} },\\
  H^{\circledast*} &= \Set{ \sigma\in\Eq{A} }{ \sigma\lhd f \text{ for all } f\in H^{\circledast} }\supseteq H^{**}.
\end{align*}
Observe that $H^{**}$, the Galois closure of $H$, is the least subset of $\Eq{A}$ containing $H$ that is the set of all congruences of some (unary) algebra $(A;F)$.
Circled asterisks indicate that we restrict our attention to $2$-valued unary operations; in particular, $H^{\circledast*}$ is the least subset of $\Eq{A}$ containing $H$ that is the set of all congruences of some $2$-valued unary algebra $(A;F)$.
We record this latter fact for reference in the following proposition.
\begin{proposition}\label{prop_Lsstar}
	Let $A$ be a finite set and $L\leq\Eq{A}$. Then $L$ is the congruence lattice of a $2$-valued unary algebra if and only if $L= L^{\circledast *}$. 
\end{proposition}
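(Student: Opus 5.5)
This is a direct consequence of the general properties of Galois connections, so the plan is to verify the two facts the paper records just before the statement and combine them.

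\emph{Step 1: $L^{\circledast *}$ is a congruence lattice containing $L$.} Put $F=L^{\circledast}\subseteq T_2(A)$. By definition, $L^{\circledast *}$ is exactly the set of $\sigma\in\Eq{A}$ with $\sigma\lhd f$ for all $f\in F$, so
\[
L^{\circledast *}=\Con{(A;F)}.
\]
Thus $L^{\circledast *}$ is the congruence lattice of a $2$-valued unary algebra. Moreover, every $\rho\in L$ is preserved by every $f\in L^{\circledast}$, so $L\subseteq L^{\circledast *}$.

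\emph{Step 2: minimality.} Suppose $L\subseteq\Con{(A;G)}$ for some $G\subseteq T_2(A)$. Every $g\in G$ then preserves all members of $L$, hence $G\subseteq L^{\circledast}$. Since the map $H\mapsto H^{*}$ is antitone, this gives
\[
L^{\circledast *}\subseteq \Con{(A;G)}.
\]
So $L^{\circledast *}$ is contained in every $2$-valued congruence lattice that contains $L$.

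\emph{Step 3: the equivalence.} If $L=L^{\circledast *}$, then $L=\Con{(A;L^{\circledast})}$ by Step~1. Conversely, if $L=\Con{(A;G)}$ with $G\subseteq T_2(A)$, then Step~2 gives $L^{\circledast *}\subseteq L$, and Step~1 gives $L\subseteq L^{\circledast *}$, so $L=L^{\circledast *}$.

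The only point that needs care is the antitonicity of the derivation operators used in Step~2, and that follows immediately from the definitions. There is no real obstacle here.
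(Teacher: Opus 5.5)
Your proof is correct and follows the paper's approach. The paper gives no separate proof: it treats the proposition as immediate from the Galois connection induced by $\lhd$, namely that $H^{\circledast *}$ is the least congruence lattice of a $2$-valued unary algebra containing $H$, and your Steps~1--3 simply spell out that standard verification.
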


\section{A characterization of congruence lattices of $2$-valued unary algebras}\label{sect_closure}

We assign two equivalence relations on $A$ to every transformation $f\in T_2(A)$:
\[
	\alpha_f=f(A)^2\cup 0_A\quad\text{and}\quad\omega_f=\ker f.
\]
From $\abs{f(A)}=2$ it follows that $\alpha_f$ is an atom and $\omega_f$ is a coatom in $\Eq{A}$.
As the next lemma (a special case of \cite[Proposition~2.3]{JSPR21}, which already appeared in \cite{Bandelt}) shows, $\alpha_f$ and $\omega_f$ determine the equivalence relations preserved by $f$.

\begin{lemma}
	Let $f\in T_2(A)$ with range $f(A)=\{a,b\}$ and let $\rho\in\Eq{A}$. 
	Then $\rho \lhd f$ if and only if $\alpha_f \leq \rho$ or $\rho \leq \omega_f$.
\end{lemma}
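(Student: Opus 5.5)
The plan is to prove the two implications separately. Throughout we use that $\alpha_f$ is the atom whose only nontrivial block is $\{a,b\}$, and that the blocks of $\omega_f=\ker f$ are the two sets $A_a=f^{-1}(a)$ and $A_b=f^{-1}(b)$.

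For sufficiency, assume first $\alpha_f\leq\rho$, so $(a,b)\in\rho$. Take any $(c,d)\in\rho$. Then $(f(c),f(d))$ is either a diagonal pair or one of $(a,b)$, $(b,a)$. Each of these lies in $\rho$, because $\rho$ is reflexive and symmetric. Next assume $\rho\leq\omega_f$. Then any $(c,d)\in\rho$ satisfies $f(c)=f(d)$, so $(f(c),f(d))\in 0_A\subseteq\rho$. In both cases $\rho\lhd f$.

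For necessity, we argue by contraposition. Suppose $\alpha_f\nleq\rho$ and $\rho\nleq\omega_f$. The second assumption gives a pair $(c,d)\in\rho$ with $(c,d)\notin\ker f$. Thus $f(c)\neq f(d)$, so $\{f(c),f(d)\}=\{a,b\}$. The first assumption means $(a,b)\notin\rho$, and by symmetry $(b,a)\notin\rho$ as well. Hence $(f(c),f(d))\notin\rho$, and $f$ does not preserve $\rho$.

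Nothing here is a real obstacle. The only point needing care is that the two-element range forces $\{f(c),f(d)\}=\{a,b\}$ whenever $f(c)\neq f(d)$. Symmetry of $\rho$ then lets us ignore the order of the pair.
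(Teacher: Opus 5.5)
Your proof is correct and is essentially the paper's argument. The paper splits on whether $(a,b)\in\rho$ and, when it is not, writes a chain of equivalences ending in $\rho\leq\ker f=\omega_f$. You run the same reasoning as two separate implications, proving the converse by contraposition.
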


\begin{proof}
	If $\alpha_f \leq \rho$, then $(a,b) \in\rho$, and in this case $(f(c),f(d)) \in \rho$ holds for all $c,d \in A$ (regardless of whether $(c,d) \in \rho$), hence $\rho\lhd f$. 
	If $(a,b) \notin \rho$, then
	\begin{align*}
		\rho \lhd f 
		&\iff \forall (c,d) \in \rho\colon (f(c),f(d)) \in \rho\\
		&\iff \forall (c,d) \in \rho\colon f(c)=f(d)\\
		&\iff \rho \leq \ker f = \omega_f.
		\qedhere
	\end{align*}  
\end{proof}

The above lemma immediately yields the following description of the Galois closure $H^{\circledast *}$, which gives a polynomial-time membership test.

\begin{lemma}\label{lemma_H**}
	Let $H \subseteq \Eq{A}$ be an arbitrary set of equivalence relations.
	For every $\sigma \in \Eq{A}$, the following three conditions are equivalent:
	\begin{enumerate}[(a)]\addtolength{\itemsep}{1mm}
		\item $\sigma\in H^{\circledast *}$;
		\item for each atom $\alpha$ and coatom $\omega$ of $\Eq{A}$, we have
		\begin{align*}
			\big(\forall \rho \in H\colon \alpha \leq \rho \text{ or } \rho \leq \omega\big)
			\ \implies\ 
			\big(\alpha \leq \sigma \text{ or } \sigma \leq \omega\big).
		\end{align*}  
		\item for each pair $(a,b) \notin \sigma$, we have
		\[
			\sigma \leq \bigvee \Set{\rho \in H}{(a,b) \notin \rho}.
		\] 
	\end{enumerate}
\end{lemma}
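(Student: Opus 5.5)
The plan is to prove (a)$\Leftrightarrow$(b) directly from the preceding lemma, and then (b)$\Leftrightarrow$(c) by unwinding what the atoms and coatoms of $\Eq{A}$ look like.

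For (a)$\Leftrightarrow$(b), note first that every pair $(\alpha,\omega)$ with $\alpha$ an atom and $\omega$ a coatom of $\Eq{A}$ is realized as $(\alpha_f,\omega_f)$ for some $f\in T_2(A)$, except when $\alpha\leq\omega$.

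To see this, write $\alpha$ as the atom collapsing $\{a,b\}$ and $\omega$ as the coatom with blocks $B_1,B_2$. If $\alpha\nleq\omega$, then $a$ and $b$ lie in different blocks, and the map sending $B_1$ to $a$ and $B_2$ to $b$ is in $T_2(A)$ with $\alpha_f=\alpha$ and $\omega_f=\omega$. Conversely, for every $f\in T_2(A)$ we have $\alpha_f\nleq\omega_f$, since the two values of $f$ have different kernel classes.

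By the preceding lemma, $f\in H^{\circledast}$ if and only if every $\rho\in H$ satisfies $\alpha_f\leq\rho$ or $\rho\leq\omega_f$. Likewise $\sigma\lhd f$ if and only if $\alpha_f\leq\sigma$ or $\sigma\leq\omega_f$. Hence (a) is exactly (b) restricted to pairs with $\alpha\nleq\omega$.

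For pairs with $\alpha\leq\omega$, the conclusion of (b) holds trivially for any $\sigma$. Indeed, either $\alpha\leq\sigma$, or $\alpha\nleq\sigma$, and in the latter case we must show $\sigma\leq\omega$. So this case is not automatic. The way out is that the hypothesis fails for some $\rho\in H$, or else the claim is handled as follows.

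I will instead reduce to (c) and prove (b)$\Leftrightarrow$(c) only for pairs with $\alpha\nleq\omega$. I will then check separately that (a)$\Rightarrow$(b) also holds for pairs with $\alpha\leq\omega$. This extra check is the main obstacle.

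For the equivalence with (c), fix the pair $(a,b)$ with $\alpha$ the atom of $\{a,b\}$. The set $\rho_0=\bigvee\Set{\rho\in H}{(a,b)\notin\rho}$ is an equivalence relation. The statement ``for all $\rho\in H$, $\alpha\leq\rho$ or $\rho\leq\omega$'' says precisely that every $\rho\in H$ not containing $(a,b)$ lies below $\omega$, that is, $\rho_0\leq\omega$.

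So (b) says: whenever $(a,b)\notin\sigma$ and $\omega$ is a coatom with $\rho_0\leq\omega$, we have $\sigma\leq\omega$. Since $\Eq{A}$ is coatomistic, this is equivalent to $\sigma\leq\rho_0$, which is (c).

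The subtlety is in the coatoms with $\alpha\leq\omega$, i.e., those containing $(a,b)$. To handle it, I will first show that in (c) we may restrict to coatoms separating $a$ from $b$. If $(a,b)\notin\rho_0$, then $\rho_0$ is the meet of the coatoms above it that separate $a$ and $b$. This holds because for any $(c,d)\notin\rho_0$ one can merge the blocks of $\rho_0$ into two classes separating $c,d$ and also $a,b$: put $a/\rho_0$ with $c/\rho_0$ and $b/\rho_0$ with $d/\rho_0$, or swap them, when these blocks coincide appropriately.

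If instead $(a,b)\in\rho_0$, then (c) can fail while (b) restricted to separating coatoms holds vacuously. So I expect to need that (c) with $(a,b)\in\rho_0$ is compatible. Here I would show directly that $\sigma\leq\rho_0$ follows from (a) by a chain argument, or else accept that the statement's (b) quantifies over all pairs. With all pairs included, the coatomistic argument works cleanly, and the equivalence (a)$\Leftrightarrow$(b) must then be justified with the degenerate pairs handled through this merging trick.
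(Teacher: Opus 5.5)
Your proof breaks at its first step. The assertion that $\alpha_f\nleq\omega_f$ for every $f\in T_2(A)$ is false. The two kernel classes of $f$ are the preimages of its values $a,b$. But the points $a$ and $b$ themselves may lie in the same preimage, since nothing forbids $f(a)=f(b)$. For example, $\tau_{112}$ on $[3]$ has range $\{1,2\}$ and $(1,2)\in\ker\tau_{112}$.

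In fact \emph{every} atom--coatom pair is realized. If $\alpha$ collapses $\{a,b\}$ and $\omega$ has blocks $B_1,B_2$, the map sending $B_1$ to $a$ and $B_2$ to $b$ lies in $T_2(A)$ with $\alpha_f=\alpha$ and $\omega_f=\omega$, wherever $a$ and $b$ lie. With this surjectivity, the preceding lemma turns ``$f\in H^{\circledast}$ implies $\sigma\lhd f$, for all $f\in T_2(A)$'' literally into (b). That is why the paper can say the description follows immediately. Your coatomistic argument for (b)$\Leftrightarrow$(c), run over all coatoms $\omega\geq\rho_0$, is correct and completes the proof. No merging trick or chain argument is needed.

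Because of this misconception, your intermediate claim that (a) is exactly (b) restricted to pairs with $\alpha\nleq\omega$ is also wrong. The obstacle you leave open cannot be removed by any argument using only such pairs, since the restricted condition is strictly weaker than (a). Here is a counterexample, in the appendix notation:
\begin{itemize}
\item[(1)] Take $A=[4]$, $H=\{(13),(23),(12)(34)\}$ and $\sigma=(34)$.
\item[(2)] For every pair $\{x,y\}\neq\{3,4\}$, the members of $H$ not containing $(x,y)$ have a join containing $(x,y)$. So no coatom separating $x$ from $y$ satisfies the hypothesis of (b).
\item[(3)] For $\{x,y\}=\{3,4\}$, the conclusion $\alpha\leq\sigma$ holds. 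Hence $\sigma$ satisfies restricted (b); this is exactly the phenomenon you noticed when $(a,b)\in\rho_0$.
\item[(4)] However, $f=\tau_{1112}$ preserves all three members of $H$ and sends $(3,4)$ to $(1,2)\notin\sigma$. So $\sigma\notin H^{\circledast*}$, and here $\alpha_f=(12)\leq\omega_f$.
\end{itemize}
As it stands, the proposal ends with unresolved alternatives and does not prove the lemma.
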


Our main result in this section is the next theorem, which gives a different, more lattice-theoretic description of $H^{\circledast *}$.

\begin{theorem}\label{thm_int} Let $A$ be a finite set, and let $H = \{\rho_1,\dots,\rho_n\} \subseteq \Eq{A}$ be an arbitrary nonempty set of equivalence relations \textup{(}here $n$ denotes the cardinality of $H$\textup{)}.
Then the Galois closure of $H$ can be described as follows:
\[
  \sigma \in H^{\circledast *} \iff \exists t \in \FL{n}\colon \sigma\in\big[t^-(\rho_1,\dots,\rho_n);t^+(\rho_1,\dots,\rho_n)\big].
\]
\end{theorem}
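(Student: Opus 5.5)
We plan to prove both directions using criterion (b) of Lemma~\ref{lemma_H**}, which reduces membership in $H^{\circledast *}$ to a condition involving only atoms $\alpha$ and coatoms $\omega$ of $\Eq{A}$. Throughout we write $\rho_i$ for the elements of $H$ and $\underline{\rho}=(\rho_1,\dots,\rho_n)$.

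For the direction ($\Leftarrow$), suppose $t^-(\underline{\rho})\leq\sigma\leq t^+(\underline{\rho})$ with $t^-=t^-_{\mathcal M}$ and $t^+=t^+_{\mathcal N}$, where $\mathcal N=\Set{\compl I}{I\notin\mathcal M}$ as in Proposition~\ref{prop_MN}. Fix an atom $\alpha$ and a coatom $\omega$ such that every $\rho_i$ satisfies $\alpha\leq\rho_i$ or $\rho_i\leq\omega$. Let $I=\Set{i}{\alpha\leq\rho_i}$; then $\rho_j\leq\omega$ for every $j\notin I$. We split into two cases.
\begin{itemize}
\item If $I\in\mathcal M$, then $\alpha\leq\bigwedge_{i\in I}\rho_i\leq t^-(\underline{\rho})\leq\sigma$, since an atom lies below each $\rho_i$ with $i\in I$ and hence below their meet.
\item If $I\notin\mathcal M$, then $\compl I\in\mathcal N$. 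Hence $\sigma\leq t^+(\underline{\rho})\leq\bigvee_{j\in\compl I}\rho_j\leq\omega$.
\end{itemize}
In both cases the conclusion of Lemma~\ref{lemma_H**}(b) holds, so $\sigma\in H^{\circledast *}$. This direction is straightforward.

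For the direction ($\Rightarrow$), given $\sigma\in H^{\circledast *}$ we must construct a suitable upset $\mathcal M$. The natural choice is
\[
\mathcal M=\Set{I\subseteq[n]}{\textstyle\bigwedge_{i\in I}\rho_i\leq\sigma},
\]
which is an upset. Then $t^-_{\mathcal M}(\underline{\rho})\leq\sigma$ holds automatically. It remains to show $\sigma\leq\bigvee_{j\in N}\rho_j$ for every $N=\compl I$ with $I\notin\mathcal M$.

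Fix $I\notin\mathcal M$ and let $\beta=\bigwedge_{i\in I}\rho_i$; by definition $\beta\not\leq\sigma$. Let $\gamma=\bigvee_{j\notin I}\rho_j$, and suppose for contradiction that $\sigma\not\leq\gamma$. We will produce an atom $\alpha$ and a coatom $\omega$ satisfying:
\begin{itemize}
\item $\alpha\leq\beta$ and $\alpha\not\leq\sigma$;
\item $\gamma\leq\omega$ and $\sigma\not\leq\omega$.
\end{itemize}
These violate Lemma~\ref{lemma_H**}(b): indeed $\alpha\leq\rho_i$ for $i\in I$ and $\rho_j\leq\omega$ for $j\notin I$, yet neither $\alpha\leq\sigma$ nor $\sigma\leq\omega$.

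Constructing the atom is easy: pick a pair $(a,b)\in\beta\setminus\sigma$ and let $\alpha$ be the atom generated by $(a,b)$.

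Constructing the coatom is the main obstacle, because $\omega$ must simultaneously lie above $\gamma$ and fail to lie above $\sigma$. We would pick a pair $(c,d)\in\sigma\setminus\gamma$ and take $\omega$ to be a two-block partition, obtained by grouping the blocks of $\gamma$ into two classes with $c$ and $d$ in different classes. Such an $\omega$ contains $\gamma$ and separates $c$ from $d$, so $\sigma\not\leq\omega$.

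One caveat should be checked: the two choices do not interact, so there is no coupling condition between $\alpha$ and $\omega$ to worry about. Finally, the edge cases $I=\emptyset$ or $I=[n]$ are covered by the conventions for empty meets and joins (e.g.\ when $I=[n]$, $\gamma=0_A$).
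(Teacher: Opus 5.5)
Your proposal is correct and follows essentially the same route as the paper: the same upset $\mathcal{M}=\Set{I}{\bigwedge_{i\in I}\rho_i\leq\sigma}$, the same atom/coatom argument through Lemma~\ref{lemma_H**}(b) in both directions, and the same case split $I\in\mathcal{M}$ versus $\compl{I}\in\mathcal{N}$. The one difference is cosmetic: you argue by contradiction and build a single two-block $\omega\geq\gamma$ separating a pair $(c,d)\in\sigma\setminus\gamma$, whereas the paper shows $\sigma\leq\omega$ for every coatom $\omega$ above $\bigvee_{i\in N}\rho_i$ and then uses coatomisticity of $\Eq{A}$; your construction is just that coatomisticity made explicit.
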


\begin{proof}
Assume that $\sigma\in H^{\circledast *}$, 
and define $\mathcal{M}=\Set{ M\subseteq [n] }{ \sigma\geq\bigwedge_{i\in M}\rho_i }$. 
Then $\mathcal{M}$ is an upset and
\[ 
  t^-_{\mathcal{M}}(\rho_1,\dots,\rho_n)=\bigvee_{M\in\mathcal{M}}\bigwedge_{i\in M}\rho_i\leq\sigma.
\]
It remains to verify the inequality 
\[
  \sigma\leq t^+_{\mathcal{N}}(\rho_1,\dots,\rho_n)=\bigwedge_{N\in\mathcal{N}}\bigvee_{i\in N}\rho_i, 
\]
where $\mathcal{N}=\Set{ \compl{I} }{ I\notin\mathcal{M} }$ (see Proposition~\ref{prop_MN}). 
Equivalently, we need to show that 
$
  \sigma\leq \bigvee_{i\in N}\rho_i
$ 
for all $N\in\mathcal{N}$.
From $\compl{N}\notin \mathcal{M}$ it follows that $\bigwedge_{i\in \compl{N}}\rho_i \nleq \sigma$ by the definition of $\mathcal{M}$. Therefore, using the fact that each element of $\Eq{A}$ is the join of the atoms below it, we see that there exists an atom $\alpha$ such that 
$ 
  \alpha\leq \bigwedge_{i\in \compl{N}}\rho_i \text{ and } \alpha\nleq\sigma.
$ 

Now let $\omega \in \Eq{A}$ be a coatom such that $\bigvee_{i\in N}\rho_i \leq \omega$. 
Then for every $i\in [n]$, either $\alpha\leq \rho_i$ (if $i\in\compl{N}$) or $\rho_i\leq\omega$ (if $i\in N$). 
Since $\sigma\in H^{\circledast *}$, it follows by Lemma~\ref{lemma_H**} that $\alpha\leq\sigma$ or $\sigma\leq\omega$. 
As $\alpha\nleq\sigma$, we conclude that $\sigma\leq\omega$.
This holds for all coatoms $\omega$ above $\bigvee_{i\in N}\rho_i$, hence $\sigma \leq \bigvee_{i\in N}\rho_i$ as claimed (here we use the fact that each element of $\Eq{A}$ is the meet of the coatoms above it).

To prove the other direction, let $t \in \FL{n}$ be an arbitrary term, let $t^-$ and $t^+$ be the corresponding DNF and CNF induced by the upsets $\mathcal{M}$ and $\mathcal{N}$, respectively.
Suppose that $\sigma\in\big[t^-(\rho_1,\dots,\rho_n);t^+(\rho_1,\dots,\rho_n)\big]$; then for every $M\in\mathcal{M}$ we have $\bigwedge_{i\in M}\rho_i \leq \sigma$ and for every $N\in \mathcal{N}$ we have $\sigma\leq \bigvee_{i\in N}\rho_i$.
Let $\alpha$ be an arbitrary atom in $\Eq{A}$, and let $I=\Set{i\in[n]}{\alpha \leq \rho_i}$.
Similarly, let $\omega$ be a coatom and let $J=\Set{i\in[n]}{\rho_i \leq \omega}$.
According to Lemma~\ref{lemma_H**}, in order to verify $\sigma \in H^{\circledast *}$, we need to prove that if $I \cup J = [n]$, then $\alpha\le \sigma$ or $\sigma\le \omega$. 
If $I\in \mathcal{M}$, then we are done, as 
$
	\alpha \leq \bigwedge_{i\in I}\rho_i \leq \sigma.
$ 
If $I\notin\mathcal{M}$, then by Proposition~\ref{prop_MN}, we have $\compl{I}\in \mathcal{N}$; furthermore, the assumption $I \cup J = [n]$ implies that $\compl{I} \subseteq J$, hence 
$
	\sigma\leq \bigvee_{i\in \compl{I}}\rho_i \leq \bigvee_{i\in J}\rho_i\leq\omega,
$
and this completes the proof.
\end{proof}

Using Proposition~\ref{prop_Lsstar} and Theorem~\ref{thm_int}, we obtain the following characterization of congruence lattices of $2$-valued unary algebras.
Note that this result is equivalent to Corollary~3 in \cite{Bandelt}.

\begin{corollary}\label{cor_intconc}
	For any finite set $A$ and any sublattice $L\leq\Eq{A}$, the following two conditions are equivalent:
	\begin{enumerate}[(a)]\addtolength{\itemsep}{1mm}
		\item there exists an algebra $\alg=(A;F)$ with $F\subseteq T_2(A)$ such that $L=\Con(\alg)$; 
		\item\label{item_interval-conditionc} for every $n \in \N$, $t \in \FL{n}$ and for all $\rho_1,\dots,\rho_n\in L$, we have
			\[
				\big[t^-(\rho_1,\dots,\rho_n);t^+(\rho_1,\dots,\rho_n)\big]_{\Eq{A}}\subseteq L.
			\]
	\end{enumerate}
\end{corollary}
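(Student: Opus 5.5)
The corollary follows by combining Proposition~\ref{prop_Lsstar} with Theorem~\ref{thm_int}. The key step is to apply Theorem~\ref{thm_int} not to an arbitrary subset but to $H=L$ itself, which is finite because $A$ is finite. I also note at the outset that condition (a) forces $L$ to be a $(0,1)$-sublattice, since it is a congruence lattice. Conversely, condition (b) forces the same: taking $n=1$ and $t$ the constant term $0$ gives $t^-=t^+=0$, so the interval $\{0_A\}\subseteq L$; likewise the constant $1$ gives $1_A\in L$. Whether the empty joins and meets are admitted as terms of $\FL{n}$ is fine, since the paper explicitly allows them. So Proposition~\ref{prop_Lsstar} applies in either direction, and the task reduces to showing that (b) is equivalent to $L=L^{\circledast*}$.

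For (a)$\Rightarrow$(b), assume $L=L^{\circledast*}$ and take $\rho_1,\dots,\rho_n\in L$ and $t\in\FL{n}$. Put $H=\{\rho_1,\dots,\rho_n\}$. There may be repetitions among the $\rho_i$, but Theorem~\ref{thm_int} is stated with $n=|H|$. I would handle this by identifying the variables corresponding to equal relations. This produces a term $t'$ on the distinct relations whose DNF and CNF, when evaluated, give the same values $t^-(\underline{\rho})$ and $t^+(\underline{\rho})$. The point is that substituting variables into a DNF (respectively CNF) yields a term that is again distributively equivalent to $t'$ and is still a join of meets (respectively a meet of joins). Since $t^-$ is the least such term and $t^+$ the greatest, the interval for $t'$ contains the interval for $t$. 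If $H$ happens to be empty (i.e.\ $n=0$), nothing needs checking, because $n\in\N$ starts from $1$.

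With this in hand, Theorem~\ref{thm_int} gives $[t^-(\underline\rho);t^+(\underline\rho)]\subseteq H^{\circledast*}$. Since $H\subseteq L$ and the Galois closure is monotone, $H^{\circledast*}\subseteq L^{\circledast*}=L$, which proves (b).

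For (b)$\Rightarrow$(a), list the elements of $L$ as $\rho_1,\dots,\rho_n$ and let $\sigma\in L^{\circledast*}$. By Theorem~\ref{thm_int} there is a term $t$ with $\sigma\in[t^-(\underline\rho);t^+(\underline\rho)]$, and by (b) this interval lies in $L$, so $\sigma\in L$. Together with the trivial inclusion $L\subseteq L^{\circledast*}$, this gives equality, and Proposition~\ref{prop_Lsstar} yields (a).

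The only delicate point is the repeated-arguments issue in the first direction. If the variable-identification argument feels awkward, a cleaner route is to observe that the proof of the ``$\Leftarrow$'' half of Theorem~\ref{thm_int} never uses distinctness of the $\rho_i$. That proof only uses Lemma~\ref{lemma_H**} with the sets $I$ and $J$, and it goes through verbatim for any tuple from $H$. I would use this observation to handle repetitions directly.
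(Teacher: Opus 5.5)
Your proof is correct and is the paper's argument: the paper obtains the corollary by combining Proposition~\ref{prop_Lsstar} with Theorem~\ref{thm_int}, as you do with $H=L$ for (b)$\Rightarrow$(a) and with monotonicity of the closure for (a)$\Rightarrow$(b). Your handling of repeated arguments fills in a detail the paper passes over. Of your two fixes, the second is the cleaner: the ``$\Leftarrow$'' half of the proof of Theorem~\ref{thm_int} uses the tuple only through the set $\{\rho_1,\dots,\rho_n\}$ via Lemma~\ref{lemma_H**}, whereas the first relies on the extremality of $t^{\mp}$, which the paper asserts without proof.
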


If $L\leq\Eq{A}$ is a distributive $(0,1)$-sublattice, then $t^-$ and $t^+$ induce the same term operation on $L$ for every term $t$, thus the interval in condition \ref{item_interval-conditionc} of Corollary~\ref{cor_intconc} is always trivial.
This gives the following corollary, which was already proved by Quackenbush and Wolk \cite{QW71} (note that their construction gives $2$-valued unary algebras, though it is not mentioned explicitly).

\begin{corollary}\label{cor_dist}
	If $A$ is a finite set, then every distributive $(0,1)$-sublattice of $\Eq{A}$ is the congruence lattice of a $2$-valued unary algebra.
	Consequently, every finite distributive lattice is \tworepr{}.
\end{corollary}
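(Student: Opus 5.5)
The plan is to apply Corollary~\ref{cor_intconc} directly. Let $L \leq \Eq{A}$ be a distributive $(0,1)$-sublattice. To show that $L$ is the congruence lattice of a $2$-valued unary algebra, it suffices to verify condition~\ref{item_interval-conditionc}. So fix $n\in\N$, a term $t\in\FL{n}$ and $\rho_1,\dots,\rho_n\in L$.

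Write $t^-=t^-_{\mathcal{M}}$ and $t^+=t^+_{\mathcal{N}}$ for the DNF and CNF attached to $t$; these are the upsets for which $t\approx t^-_{\mathcal{M}}$ and $t\approx t^+_{\mathcal{N}}$ hold in every bounded distributive lattice. In particular $t^-_{\mathcal{M}}\approx t^+_{\mathcal{N}}$ holds in $L$. Here we use that $L$ is a $(0,1)$-sublattice, so that empty meets and joins evaluate to $1_A$ and $0_A$, which are the bounds of $L$ as well. Lattice terms are computed the same way in $L$ and in $\Eq{A}$, because $L$ is a sublattice. Hence
\[
t^-(\rho_1,\dots,\rho_n)=t^+(\rho_1,\dots,\rho_n)=t(\rho_1,\dots,\rho_n)\in L,
\]
with the values taken in $\Eq{A}$. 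The interval $\big[t^-(\rho_1,\dots,\rho_n);t^+(\rho_1,\dots,\rho_n)\big]_{\Eq{A}}$ is therefore the singleton $\{t(\rho_1,\dots,\rho_n)\}$, which lies in $L$. Condition~\ref{item_interval-conditionc} holds, and Corollary~\ref{cor_intconc} gives $L=\Con{\alg}$ for some $\alg=(A;F)$ with $F\subseteq T_2(A)$.

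For the second claim, let $D$ be a finite distributive lattice. We need a $(0,1)$-embedding of $D$ into some finite $\Eq{A}$; its image is then a distributive $(0,1)$-sublattice, and the first part makes that embedding a \twogood{}. To build it, let $J$ be the set of join-irreducible elements of $D$. Birkhoff's representation gives a $(0,1)$-embedding $D\hookrightarrow\PSet{J}$. Next, for a finite set $J$, the Boolean lattice $\PSet{J}$ $(0,1)$-embeds into $\Eq{A}$ with $A=J\cup\{\ast\}$ and $\ast\notin J$: send $X\subseteq J$ to the equivalence whose only non-singleton block is $X\cup\{\ast\}$.

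Joins correspond to unions of the blocks through $\ast$. The union of two such blocks is again a block through $\ast$, so no transitive closure is created. Meets correspond to intersections, $\emptyset\mapsto 0_A$, and $J\mapsto 1_A$. The map is clearly injective.

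The only point needing care is this explicit embedding. The first part is immediate, and one could alternatively cite any standard $(0,1)$-embedding of a finite distributive lattice into a finite partition lattice.
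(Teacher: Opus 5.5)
Your proposal is correct and follows the paper. The first part is exactly the paper's observation that $t^-$ and $t^+$ induce the same term operation on a distributive $(0,1)$-sublattice, so the intervals in condition~(b) of Corollary~\ref{cor_intconc} are singletons. For the second part, your explicit $(0,1)$-embedding (join-irreducibles plus one adjoined point, with $X\mapsto$ the partition whose only non-singleton block is $X\cup\{\ast\}$) is the same construction the paper uses in the proof of Proposition~\ref{prop_spec2(D)}.
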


We formulate two more consequences of Corollary~\ref{cor_intconc}: a characterization of \twogood{}s (see Definition~\ref{def_goodfi}) and a necessary condition for \tworepry{}.

\begin{corollary}\label{cor_intabst}
	For any finite lattice $L$, any finite set $A$ and any $(0,1)$-embedding $\varphi\colon L \hookrightarrow \Eq{A}$, the following three conditions are equivalent:
	\begin{enumerate}[(a)]\addtolength{\itemsep}{1mm}
		\item $\varphi$ is a \twogood{}; 
		\item\label{item_interval-condition-phi-b} for every $n \in \N$, $t \in \FL{n}$ and for all $\uv\in L^n$, we have 
			\[
				\big[\varphi(t^-(\uv));\varphi(t^+(\uv))\big]_{\Eq{A}}\subseteq \varphi(L).
			\]
		\item\label{item_interval-condition-phi-c} for every $n \in \N$, $t \in \FL{n}$ and for all $\uv\in L^n$, we have 
			\[
				\varphi\big([t^-(\uv);t^+(\uv)]_L\big) = \big[\varphi(t^-(\uv));\varphi(t^+(\uv))\big]_{\Eq{A}}.
			\]
	\end{enumerate}
\end{corollary}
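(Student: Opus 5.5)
The plan is to prove the cycle (a)$\Rightarrow$(b)$\Rightarrow$(c)$\Rightarrow$(a), transporting everything through the embedding. The basic observation is that, since $\varphi$ is a $(0,1)$-lattice embedding, it commutes with every lattice term. Hence for all $\uv\in L^n$ we have $\varphi(t^\pm(\uv)) = t^\pm(\varphi(u_1),\dots,\varphi(u_n))$, where the right-hand side is evaluated in $\Eq{A}$. The image $\varphi(L)$ is a $(0,1)$-sublattice of $\Eq{A}$. By Definition~\ref{def_goodfi}, $\varphi$ is a \twogood{} exactly when $\varphi(L)$ is the congruence lattice of a $2$-valued unary algebra on $A$.

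\textbf{(a)$\Rightarrow$(b).} Apply Corollary~\ref{cor_intconc} to the sublattice $\varphi(L)$ with the elements $\rho_i=\varphi(u_i)$. The interval $[t^-(\rho_1,\dots,\rho_n);t^+(\rho_1,\dots,\rho_n)]_{\Eq{A}}$ is contained in $\varphi(L)$. By the commutation remark, this interval is exactly $[\varphi(t^-(\uv));\varphi(t^+(\uv))]_{\Eq{A}}$.

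\textbf{(b)$\Rightarrow$(c).} The inclusion ``$\subseteq$'' holds in general, because $\varphi$ is order-preserving, so $\varphi$ maps $[t^-(\uv);t^+(\uv)]_L$ into $[\varphi(t^-(\uv));\varphi(t^+(\uv))]_{\Eq{A}}$. For ``$\supseteq$'', take $\sigma$ in the right-hand interval. By (b), $\sigma=\varphi(w)$ for some $w\in L$. Since an embedding also reflects the order, $\varphi(t^-(\uv))\le\varphi(w)\le\varphi(t^+(\uv))$ gives $t^-(\uv)\le w\le t^+(\uv)$ in $L$. So $\sigma$ lies in the image of the $L$-interval.

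\textbf{(c)$\Rightarrow$(a).} Condition (c) trivially implies (b), since the left-hand side of (c) lies in $\varphi(L)$. Condition (b), rewritten via the commutation remark, is precisely condition~\ref{item_interval-conditionc} of Corollary~\ref{cor_intconc} for the sublattice $\varphi(L)$. This works because every tuple of elements of $\varphi(L)$ has the form $(\varphi(u_1),\dots,\varphi(u_n))$. Corollary~\ref{cor_intconc} then yields a $2$-valued unary algebra with congruence lattice $\varphi(L)$, so $\varphi$ is a \twogood{}.

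There is no genuine obstacle. The only points needing care are that $t^-$ and $t^+$ are fixed lattice terms, namely a DNF and a CNF, so they commute with the homomorphism $\varphi$, and that order-reflection of $\varphi$ is what gives (b)$\Rightarrow$(c).
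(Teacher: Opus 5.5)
Your proof is correct and follows essentially the paper's route. The paper gives no separate proof and presents this corollary as an immediate consequence of Corollary~\ref{cor_intconc} applied to the $(0,1)$-sublattice $\varphi(L)$, with $\varphi$ commuting with the terms $t^{\mp}$. Your details — order-reflection for (b)$\Rightarrow$(c), and the fact that every tuple from $\varphi(L)$ has the form $\varphi(\uv)$ for (c)$\Rightarrow$(a) — are exactly the routine verifications the paper leaves implicit.
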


\begin{corollary}\label{cor_necessary}
	If a finite lattice $L$ is \tworepr{}, then for every $n \in \N$, $t \in \FL{n}$ and for all $\uv\in L^n$, the interval $[t^-(\uv);t^+(\uv)]_L$ is isomorphic to an interval in a partition lattice.
\end{corollary}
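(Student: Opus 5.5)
Take a \twogood{} $\varphi\colon L\hookrightarrow\Eq{A}$; it exists by Definition~\ref{def_goodfi}, since $L$ is \tworepr{}. Fix $n\in\N$, $t\in\FL{n}$ and $\uv\in L^n$, and write $u^-=t^-(\uv)$ and $u^+=t^+(\uv)$. By Proposition~\ref{prop_MN}\,(\ref{item_MN-inequality}), the inequality $u^-\leq u^+$ holds in $L$, so the interval $[u^-;u^+]_L$ is nonempty. The goal is to show that $\varphi$ restricted to this interval is an isomorphism onto the interval $[\varphi(u^-);\varphi(u^+)]_{\Eq{A}}$ of the partition lattice $\Eq{A}$.

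First, $\varphi$ is a lattice embedding, so its restriction to $[u^-;u^+]_L$ is an injective lattice homomorphism. Its image lies in $[\varphi(u^-);\varphi(u^+)]_{\Eq{A}}$, because $\varphi$ is monotone. Second, Corollary~\ref{cor_intabst}\,(\ref{item_interval-condition-phi-c}) states exactly that
\[
\varphi\big([u^-;u^+]_L\big)=\big[\varphi(u^-);\varphi(u^+)\big]_{\Eq{A}},
\]
so the restriction is surjective onto that interval. A bijective lattice homomorphism is a lattice isomorphism, which completes the argument.

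The only step needing care is the passage from condition~(\ref{item_interval-condition-phi-b}) to condition~(\ref{item_interval-condition-phi-c}) in Corollary~\ref{cor_intabst}, and we may take that corollary as given. If one wanted to check it directly, the argument is short. By~(\ref{item_interval-condition-phi-b}), every $\sigma$ in the $\Eq{A}$-interval equals $\varphi(w)$ for some $w\in L$. Then $\varphi(u^-)\leq\varphi(w)\leq\varphi(u^+)$, and since $\varphi$ is an order-embedding, $u^-\leq w\leq u^+$. Hence $w$ lies in $[u^-;u^+]_L$.

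If desired, Fact~\ref{fact_intervals-in-Eq(A)} describes the resulting interval explicitly as a product of partition lattices. The statement itself only requires an interval of $\Eq{A}$, so this is not needed.
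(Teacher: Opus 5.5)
Your proof is correct and follows the paper's route. The paper states this corollary as an immediate consequence of Corollary~\ref{cor_intabst}: by condition~(\ref{item_interval-condition-phi-c}) there, the restriction of a \twogood{} $\varphi$ maps $[t^-(\uv);t^+(\uv)]_L$ isomorphically onto the interval $[\varphi(t^-(\uv));\varphi(t^+(\uv))]_{\Eq{A}}$, which is exactly your argument.
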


\begin{example}\label{example_L73d}
Consider the lattice in Figure~\ref{Fig:L73d} and the terms $t^-(x,y,z)=x\vee(y\wedge z)$ and $t^+(x,y,z)=(x\vee y)\wedge (x\vee z)$. We have $t^-(u,v,w)=u$ and $t^+(u,v,w)=1$. 
However, for any (finite) set $A$, the lattice $\Eq{A}$ does not contain an interval that is a $3$-element chain (see Fact~\ref{fact_intervals-in-Eq(A)}). 
Therefore, by Corollary~\ref{cor_necessary}, the lattice is not \tworepr{}.

\begin{figure}[ht]
\centering
\begin{tikzpicture}[x=0.8mm,y=0.8mm, line width=0.35pt,
 lab/.style={draw=none, fill=none, below=3mm, font=\small}]

\newcommand{\vertex}[2]{\filldraw[black] (#1,#2) circle (1.2mm);}
\newcommand{\lat}[4]{%
 \begin{scope}[shift={(#1,#2)}]
  #4
 \end{scope}}

 
\lat{0}{0}{L_{7,3}^d}{
 \draw (0,0)--(-10,10)--(-10,20)--(0,30)--(10,20)--(10,10)--(0,0);
 \draw (0,0)--(0,10)--(10,20);
 
 \vertex{0}{30}
 \node at (-3.5,30) {$1$};
 \vertex{-10}{20}\vertex{10}{20}
 \vertex{-10}{10}\vertex{0}{10}\vertex{10}{10}
 \node at (-13.5,10) {$u$}; \node at (-3.5,10) {$v$}; \node at (13.5,10) {$w$};
 \vertex{0}{0}}
\end{tikzpicture}
\caption{The lattice $L_{7,3}^d$}\label{Fig:L73d}
\end{figure} 
\end{example}

The next proposition considers an extreme case of the interval condition \ref{item_interval-conditionc} of Corollary~\ref{cor_intconc}.

\begin{proposition}\label{prop_median}
	If the congruence lattice of a finite $2$-valued unary algebra $\alg$ contains a $(0,1)$-sublattice isomorphic to $M_3$, then $\Con{\alg}=\Eq{A}$, i.e., every equivalence relation is a congruence of $\alg$.
	Consequently, if $L$ is a finite \tworepr{} lattice that contains a $(0,1)$-sublattice isomorphic to $M_3$,	then $L \cong \Eq{A}$ for some finite set $A$, hence the cardinality of $L$ is a Bell number.
\end{proposition}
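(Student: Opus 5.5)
The plan is to apply Corollary~\ref{cor_intconc} with the median terms of Example~\ref{example_median}. Let $L=\Con{\alg}\leq\Eq{A}$, and suppose $\rho_1,\rho_2,\rho_3\in L$ together with $0_A$ and $1_A$ form a $(0,1)$-sublattice isomorphic to $M_3$. Then $\rho_i\wedge\rho_j=0_A$ and $\rho_i\vee\rho_j=1_A$ for $i\neq j$. Take $t$ to be the ternary median term, so that $t^-=m^-$ and $t^+=m^+$; these are the DNF and CNF of the same term, because $\mathcal{M}$ is self-dual in Proposition~\ref{prop_MN}.

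Evaluating at the atoms of $M_3$ gives
\[
m^-(\rho_1,\rho_2,\rho_3)=(\rho_1\wedge\rho_2)\vee(\rho_1\wedge\rho_3)\vee(\rho_2\wedge\rho_3)=0_A
\]
and
\[
m^+(\rho_1,\rho_2,\rho_3)=(\rho_1\vee\rho_2)\wedge(\rho_1\vee\rho_3)\wedge(\rho_2\vee\rho_3)=1_A.
\]
Condition~\ref{item_interval-conditionc} of Corollary~\ref{cor_intconc} then says that $[0_A;1_A]_{\Eq{A}}=\Eq{A}$ is contained in $L$. Hence $\Con{\alg}=\Eq{A}$, which is the first assertion.

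For the second assertion, let $\phi\colon L\hookrightarrow\Eq{A}$ be a \twogood{}, so $\phi(L)=\Con{\alg}$ for some $2$-valued unary algebra $\alg$ on $A$. Since $\phi$ is a $(0,1)$-embedding, it carries the given $(0,1)$-sublattice $M_3$ of $L$ to a $(0,1)$-sublattice of $\Con{\alg}$ isomorphic to $M_3$. By the first part, $\phi(L)=\Eq{A}$, so $L\cong\Eq{A}$ and $\abs{L}=\abs{\Eq{A}}$ is the Bell number $B_{\abs{A}}$.

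There is no real obstacle here. The only points needing care are that the interval condition is used with the correct pair $t^-,t^+$ coming from one term, which Example~\ref{example_median} guarantees, and that "$(0,1)$-sublattice" is what forces $m^-=0_A$ and $m^+=1_A$ rather than some intermediate elements.
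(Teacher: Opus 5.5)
Your proof is correct and is essentially the paper's own argument. You evaluate the median terms $m^-$ and $m^+$ of Example~\ref{example_median} at the three middle elements of the $M_3$ to get $0_A$ and $1_A$, apply Corollary~\ref{cor_intconc} to obtain $\Con{\alg}=\Eq{A}$, and then observe that any \twogood{} of $L$ must be surjective.
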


\begin{proof}
	Let $\{0_A,\rho,\sigma,\tau,1_A\} \leq \Con{\alg}$ be a sublattice that is isomorphic to $M_3$.
	Then we have $m^-(\rho,\sigma,\tau)=0_A$ and $m^+(\rho,\sigma,\tau)=1_A$ (here $m^-$ and $m^+$ are the median terms introduced in Example~\ref{example_median}), hence Corollary~\ref{cor_intconc} implies that $[0_A,1_A]_{\Eq{A}} \subseteq \Con{\alg}$, thus $\Con{\alg}=\Eq{A}$ indeed.
	The second claim follows immediately: any \twogood{} $\varphi\colon L \hookrightarrow \Eq{A}$ must be surjective.
\end{proof}  

\begin{example}\label{example_M_3^2}

\begin{figure}[ht]
\centering
\begin{tikzpicture}[x=0.9mm,y=0.6mm, line width=0.35pt,
 lab/.style={draw=none, below=3mm, font=\small}]

\newcommand{\vertex}[2]{\filldraw[black] (#1,#2) circle (0.9mm);}
\newcommand{\vertexC}[3]{\filldraw[fill=#3] (#1,#2) circle (0.9mm);}
\newcommand{\lat}[4]{%
 \begin{scope}[shift={(#1,#2)}]
  #4
 \end{scope}}


\lat{0}{0}{M_3\times M_3}{
 \draw (-5,10)--(0,0)--(5,10)--(0,20)--(-5,10);
 \draw (0,0)--(0,20);
 \vertex{-5}{10}\vertex{0}{10}\vertex{0}{20}\vertex{5}{10}}
\lat{-30}{30}{}{
 \draw (-5,10)--(0,0)--(5,10)--(0,20)--(-5,10);
 \draw (0,0)--(0,20);
 \vertex{0}{0}\vertex{0}{10}\vertex{0}{20}\vertex{5}{10}}
\lat{30}{30}{}{
 \draw (-5,10)--(0,0)--(5,10)--(0,20)--(-5,10);
 \draw (0,0)--(0,20);
 \vertex{0}{0}\vertex{-5}{10}\vertex{0}{10}\vertex{0}{20}}
\lat{0}{60}{}{
 \draw (-5,10)--(0,0)--(5,10)--(0,20)--(-5,10);
 \draw (0,0)--(0,20);
 \vertex{0}{0}\vertex{-5}{10}\vertex{0}{10}\vertex{5}{10}}
\lat{0}{30}{}{
 \draw (-5,10)--(0,0)--(5,10)--(0,20)--(-5,10);
 \draw (0,0)--(0,20);
 \vertex{0}{0}\vertex{-5}{10}\vertex{0}{20}\vertex{5}{10}}
\lat{0}{0}{}{%
 \draw (0,0)--(-30,30)--(0,60)--(30,30)--(0,0);}
\lat{-5}{10}{}{%
 \draw (0,0)--(-30,30)--(0,60)--(30,30)--(0,0);}
\lat{0}{10}{}{%
 \draw (0,0)--(-30,30)--(0,60)--(30,30)--(0,0);}
\lat{5}{10}{}{%
 \draw (0,0)--(-30,30)--(0,60)--(30,30)--(0,0);}
\lat{0}{20}{}{%
 \draw (0,0)--(-30,30)--(0,60)--(30,30)--(0,0);}
\lat{0}{0}{}{%
 \draw (0,0) .. controls (3,10) and (3,20) .. (0,30) .. controls (3,40) and (3,50) .. (0,60);}
\lat{0}{10}{}{%
 \draw (0,0) .. controls (-3,10) and (-3,20) .. (0,30) .. controls (-3,40) and (-3,50) .. (0,60);}
\lat{0}{20}{}{%
 \draw (0,0) .. controls (3,10) and (3,20) .. (0,30);
 \draw (0,30) .. controls (3,40) and (3,50) .. (0,60);}
 \draw (-5,10)--(-5,40)--(-5,70);
 \draw (5,10)--(5,40)--(5,70);

 \vertexC{0}{0}{lightgray}
 \vertexC{-35}{40}{lightgray}
 \vertexC{0}{40}{lightgray}
 \vertexC{35}{40}{lightgray}
 \vertexC{0}{80}{lightgray}
\end{tikzpicture}
\caption{The lattice $M_3\times M_3$}\label{fig_M3xM3}
\end{figure}

Let $M_3=\{0,u,v,w,1\}$ and consider the lattice $M_3\times M_3$ (see Figure~\ref{fig_M3xM3}).
The sublattice $\{(0,0),\,(u,u),\,(v,v),\,(w,w),\,(1,1)\} \leq M_3\times M_3$ is isomorphic to $M_3$ (see the gray vertices in Figure~\ref{fig_M3xM3}). 
Hence, if $M_3\times M_3$ were \tworepr, then, by Proposition~\ref{prop_median}, we would have $M_3\times M_3 \cong \Eq{A}$ for some finite set $A$.
However, no partition lattice has exactly $25$ elements; therefore, we can conclude that $M_3\times M_3$ is not \tworepr{}.

On the other hand, it follows from Theorem~\ref{thm_antenna-product} that adjoining an extra top element to $M_3\times M_3$ yields a lattice that is \tworepr{}.
\end{example}

\section{Constructions (not) preserving representability}\label{sect_constructions}

\subsection{Filters, ideals and intervals}

If $\vartheta$ is a congruence of $\alg$, then $\Con{(\alg/\vartheta)} \cong [\vartheta;1_A]_{\Con{\alg}}$.
This immediately implies that principal filters of \tworepr{} lattices are also \tworepr{}.

\begin{proposition}\label{prop_filters}
	If $L$ is a \tworepr{} lattice, then the filter $[u;1_L]$ is also \tworepr{} for every $u \in L$.
\end{proposition}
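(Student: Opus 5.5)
The plan is to pass to a quotient algebra, as the remark before the statement suggests. I only need to check that the quotient of a $2$-valued unary algebra is again $2$-valued, or can be made so without changing its congruence lattice.

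Let $\phi\colon L\hookrightarrow\Eq{A}$ be a \twogood{} with image $\Con{\alg}$, where $\alg=(A;F)$ and $F\subseteq T_2(A)$. Put $\vartheta=\phi(u)$. Then $\vartheta$ is a congruence of $\alg$, so the quotient algebra $\alg/\vartheta=(A/\vartheta;F/\vartheta)$ is defined on the finite set $A/\vartheta$. Its operations are $f/\vartheta\colon a/\vartheta\mapsto f(a)/\vartheta$. By the correspondence theorem, $\Con{(\alg/\vartheta)}$ is the image of $[\vartheta;1_A]_{\Con{\alg}}$ under the isomorphism $[\vartheta;1_A]_{\Eq{A}}\cong\Eq{A/\vartheta}$, $\sigma\mapsto\sigma/\vartheta$. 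Composing this map with the restriction of $\phi$ to $[u;1_L]$ gives a $(0,1)$-embedding $[u;1_L]\hookrightarrow\Eq{A/\vartheta}$. Its image is exactly $\Con{(\alg/\vartheta)}$, because $\phi$ maps $[u;1_L]$ onto $[\vartheta;1_A]_{\Con{\alg}}$.

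The one point needing care is that the operations of $\alg/\vartheta$ must lie in $T_2(A/\vartheta)$. The range of $f/\vartheta$ is $\{a/\vartheta,b/\vartheta\}$, where $f(A)=\{a,b\}$. This set has two elements unless $(a,b)\in\vartheta$, and in that case $f/\vartheta$ is constant. Constant operations preserve every equivalence relation, so I will simply discard them. Let $F'$ be the set of those $f/\vartheta$ with $(a,b)\notin\vartheta$. Then $F'\subseteq T_2(A/\vartheta)$, and $(A/\vartheta;F')$ has the same congruence lattice as $\alg/\vartheta$. This shows that $[u;1_L]$ is \tworepr{}.

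A degenerate case needs a word: if $u=1_L$, the filter is $\mathbf{1}$ and $A/\vartheta$ is a singleton. Then $F'=\emptyset$, and $\Eq{A/\vartheta}\cong\mathbf{1}$, so the claim still holds under the conventions of Definition~\ref{def_goodfi}. No step is a real obstacle; the only thing to verify is the constant-operation issue above.
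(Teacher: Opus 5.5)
Your proof is correct and follows the paper's route: the paper passes to the quotient algebra $\alg/\vartheta$ and uses $\Con{(\alg/\vartheta)} \cong [\vartheta;1_A]_{\Con{\alg}}$, with no further argument. Your extra check, that each quotient operation is either $2$-valued or constant and that the constant ones can be dropped without changing the congruence lattice, fills in a detail the paper leaves implicit.
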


\begin{remark}\label{rem_M_3^2+1}
	The analogous statement fails for principal ideals.
	Indeed, by Theorem~\ref{thm_antenna-product} and Example~\ref{example_M_3^2}, $M_3^2 \oplus \mathbf{1}$ is \tworepr{}, but $M_3^2$ is not. 
\end{remark}

We will prove that adding an extra top element to a principal ideal of a \tworepr{} lattice yields again a \tworepr{} lattice.

\begin{definition}\label{def_Ftau}
  For $\tau\in\Eq{A}$ define $F_{\tau}=\Set{ f\in T_2(A) }{ \ker f\geq \tau }$.
\end{definition}

\begin{lemma}\label{lemma_Ftau}
	For arbitrary  $\rho,\tau\in\Eq{A}$, we have
	\[
		\rho \lhd F_{\tau} \iff \rho\leq\tau \text{ or } \rho=1_A.
	\]
\end{lemma}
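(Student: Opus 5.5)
We use the earlier lemma characterizing preservation: for $f\in T_2(A)$, $\rho\lhd f$ iff $\alpha_f\leq\rho$ or $\rho\leq\omega_f$.

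The backward direction is immediate. If $\rho=1_A$, every operation preserves $\rho$. If $\rho\leq\tau$ and $f\in F_\tau$, then $\rho\leq\tau\leq\ker f=\omega_f$, so $\rho\lhd f$ by the lemma.

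For the forward direction we argue contrapositively. Assume $\rho\nleq\tau$ and $\rho\neq 1_A$; we construct $f\in F_\tau$ with $\alpha_f\nleq\rho$ and $\rho\nleq\omega_f$. Such an $f$ is determined by a coatom $\omega\geq\tau$, i.e.\ a two-block partition $\{B_1,B_2\}$ of $A$ whose blocks are unions of $\tau$-blocks, together with a pair of values $a\neq b$ with $(a,b)\notin\rho$. We then put $f(B_1)=a$ and $f(B_2)=b$, so that $\ker f=\omega\geq\tau$ and $f(A)=\{a,b\}$.

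The two requirements are handled separately.
\begin{itemize}
\item[(1)] Since $\rho\neq 1_A$, there exist $a,b$ with $(a,b)\notin\rho$; these will be the values, giving $\alpha_f\nleq\rho$.
\item[(2)] Since $\rho\nleq\tau$, there is a pair $(c,d)\in\rho$ with $(c,d)\notin\tau$, so $c$ and $d$ lie in different $\tau$-blocks. Let $B_1=c/\tau$ and $B_2=A\setminus B_1$. Both blocks are nonempty unions of $\tau$-blocks, with $c\in B_1$ and $d\in B_2$. Hence $(c,d)\notin\omega_f$, which gives $\rho\nleq\omega_f$.
\end{itemize}
The choices in (1) and (2) are independent, because the values $a,b$ need not be related to the blocks. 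So $f\in F_\tau$ with $\abs{f(A)}=2$ fails to preserve $\rho$, as required.

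The only point needing care is that $f$ is genuinely $2$-valued, which holds because both blocks are nonempty and $a\neq b$.
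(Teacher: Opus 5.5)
Your proof is correct and is essentially the paper's argument. Both use the same witness, a transformation that is constant on one $\tau$-block $c/\tau$ and takes a different constant value elsewhere. The paper argues directly that $(f(c),f(d))$ can be any pair of distinct elements, forcing $\rho=1_A$, while you argue contrapositively through the $\alpha_f$/$\omega_f$ preservation criterion; this is only a difference in presentation.
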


\begin{proof}
	Assume first that $\rho\lhd F_{\tau}$ but $\rho\nleq\tau$; we will prove that $\rho=1_A$. 
	From $\rho\nleq\tau$ it follows that there exists a pair $(a,b)\in\rho\setminus \tau$.
	Let $c\neq d$ be arbitrary elements of $A$ and define the transformation
	\[
	  f\colon A\to A,\quad 
	  f(x)=\begin{cases}
	      c &\text{ if } x \in a/\tau,\\
	      d &\text{ otherwise}.
	     \end{cases}
	\]
	Clearly, $f\in F_{\tau}$, thus $\rho \lhd f$, and this implies that $(c,d)=(f(a),f(b))\in\rho$. 
	Since $c$ and $d$ were arbitrary distinct elements of $A$, it follows that $\rho=A\times A$.
	
	To prove the other direction, assume that $\rho \leq \tau$, and let $(a,b) \in \rho$; then $(a,b) \in \tau$ also holds. 
	For every $f\in F_{\tau}$, we have $\ker f\geq \tau$, hence $f(a)=f(b)$, which obviously guarantees $(f(a),f(b))\in\rho$.
\end{proof}

\begin{theorem}\label{thm_ideals}
	If $L$ is a \tworepr{} lattice and $v \in L \setminus \{ 1_L \}$, then the lattice $[0_L;v]_L \oplus \mathbf{1}$ is also \tworepr{}.
\end{theorem}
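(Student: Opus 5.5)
Let $\varphi\colon L\hookrightarrow\Eq{A}$ be a \twogood{}, so $\varphi(L)=\Con{(A;F)}$ for some $F\subseteq T_2(A)$, and put $\tau=\varphi(v)$. Since $v\neq 1_L$ and $\varphi$ is an embedding, $\tau\neq 1_A$. The natural candidate is the algebra $(A;F\cup F_\tau)$, with $F_\tau$ as in Definition~\ref{def_Ftau}.

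By Lemma~\ref{lemma_Ftau}, an equivalence $\rho$ is a congruence of this algebra if and only if $\rho\in\varphi(L)$ and also $\rho\leq\tau$ or $\rho=1_A$. Since $\varphi$ is an order embedding, the elements of $\varphi(L)$ below $\tau$ are exactly the images $\varphi([0_L;v]_L)$. Hence
\[
	\Con{(A;F\cup F_\tau)}=\varphi([0_L;v]_L)\cup\{1_A\}.
\]

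It remains to check that this set, as a sublattice of $\Eq{A}$, is isomorphic to $[0_L;v]_L\oplus\mathbf{1}$. The restriction of $\varphi$ to the interval is a lattice embedding, because meets and joins of elements below $v$ stay below $v$ and $\varphi$ preserves them. Moreover $1_A$ lies strictly above $\tau$, the top of the image. So the map that sends $[0_L;v]_L$ via $\varphi$ and the new top to $1_A$ is an isomorphism onto the congruence lattice. This isomorphism is a $(0,1)$-embedding into $\Eq{A}$ whose image is the congruence lattice of a $2$-valued unary algebra, so the lattice $[0_L;v]_L\oplus\mathbf{1}$ is \tworepr{}.

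The only delicate points are that $1_A$ is genuinely a new element, which holds because $\tau\neq 1_A$, and that the bottom is preserved, which holds because $\varphi(0_L)=0_A\leq\tau$. Neither presents a real obstacle: the work is already contained in Lemma~\ref{lemma_Ftau}.
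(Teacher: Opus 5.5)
Your proposal is correct and follows the paper's proof essentially verbatim: the paper also adjoins $F_\tau$ for $\tau=\varphi(v)$ to the operations of the algebra. It then reads off $\Con{(A;F\cup F_\tau)}=[0_A;\tau]_{\Con{(A;F)}}\cup\{1_A\}\cong[0_L;v]_L\oplus\mathbf{1}$ from Lemma~\ref{lemma_Ftau}, exactly as you do.
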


\begin{proof}
	If $L$ is \tworepr{}, then there exists an algebra $\alg=(A;F)$ with $F\subseteq T_2(A)$ such that $\Con{\alg} \cong L$. 
	Let $\tau \in \Con{\alg}$ correspond to $v$ under this isomorphism.
	Then the algebra $\alg':=(A;F\cup F_{\tau})$ is also a $2$-valued unary algebra, and Lemma~\ref{lemma_Ftau} shows that $\Con{\alg'}=[0_A;\tau]_{\Con{\alg}} \cup \{1_A\} \cong [0_L;v]_L \oplus \mathbf{1}$.
\end{proof}

\begin{remark}\label{rem_v=1}
	The theorem above is true for $v = 1_L$ as well.
	In this case $[0_L;v]_L \oplus \mathbf{1} = L \oplus \mathbf{1}$, which is \tworepr{} by Corollary~\ref{cor_oplus} if $L$ is \tworepr{}.
\end{remark}

\begin{theorem}\label{thm_intervals}
	If $L$ is a \tworepr{} lattice and $u,v \in L$, then the lattice $[u;v]_L \oplus \mathbf{1}$ is also \tworepr{}.
\end{theorem}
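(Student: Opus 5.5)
The plan is to combine Proposition~\ref{prop_filters} with Theorem~\ref{thm_ideals}. First, if $u \nleq v$, the interval $[u;v]_L$ is empty, and the statement should be read as being about a nonempty interval; so we assume $u\leq v$. By Proposition~\ref{prop_filters}, the principal filter $L' := [u;1_L]_L$ is \tworepr{}. The element $v$ lies in $L'$, and the principal ideal of $L'$ generated by $v$ is
\[
[0_{L'};v]_{L'} = [u;v]_{L'} = [u;v]_L.
\]

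Next we split according to whether $v$ is the top element. If $v \neq 1_L = 1_{L'}$, then Theorem~\ref{thm_ideals}, applied to the \tworepr{} lattice $L'$ and the element $v \in L'\setminus\{1_{L'}\}$, shows directly that $[u;v]_L\oplus\mathbf{1}$ is \tworepr{}. If $v = 1_L$, then $[u;v]_L = L'$ itself. Here we invoke Remark~\ref{rem_v=1}, i.e.\ the fact (Corollary~\ref{cor_oplus}, proved later from the ordinal sum construction) that $L'\oplus\mathbf{1}$ is \tworepr{} whenever $L'$ is.

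To avoid relying on a forward reference, one can also handle this last case directly. Represent $L'$ as $\Con{\alg}$ for a $2$-valued unary algebra $\alg=(A;F)$, and add a new element $e$ to the underlying set to obtain $A\cup\{e\}$. Each $f\in F$ is extended by $f(e)$ equal to a fixed value of $f$. We then add suitable $2$-valued operations $F_\tau$ with $\tau$ the equivalence having blocks $A$ and $\{e\}$; by Lemma~\ref{lemma_Ftau}, these cut the congruences down to those below $\tau$ together with $1$. The main obstacle is verifying that the extended operations preserve exactly the extensions of the old congruences, so that the congruences below $\tau$ correspond precisely to $\Con{\alg}$. For this reason I would prefer simply to cite Remark~\ref{rem_v=1}, in which case the proof is just the two-line composition of the filter and ideal results.
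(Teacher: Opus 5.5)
Your proposal is correct and follows the paper's proof: you pass to the filter $[u;1_L]_L$ via Proposition~\ref{prop_filters}, apply Theorem~\ref{thm_ideals} to it when $v\neq 1_L$, and cover $v=1_L$ by Remark~\ref{rem_v=1}, i.e.\ Corollary~\ref{cor_oplus}. The direct construction you sketched for the case $v=1_L$ also works, and its ``main obstacle'' is immediate: the only pair involving $e$ in a relation below $\tau$ is $(e,e)$, so the extended $f$ preserves $\rho\cup\{(e,e)\}$ if and only if $f$ preserves $\rho$, which is exactly the case $L_2=\mathbf{1}$ of Theorem~\ref{thm_antenna-product}.
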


\begin{proof}
	By Proposition~\ref{prop_filters}, the filter $[u;1_L]_L$ is \tworepr{}, and then applying Theorem~\ref{thm_ideals} to this latter lattice we get that $[u;v]_L \oplus \mathbf{1}$ is \tworepr{} (if $v=1_L$, then we use Corollary~\ref{cor_oplus}, as explained in Remark~\ref{rem_v=1}).
\end{proof}

\subsection{Ordinal sum}

\begin{lemma}\label{lem_oplusc} 
Let $L_1$ and $L_2$ be bounded lattices, and set $L=L_1\oplus_w L_2$. Then for every $n \in \N$, $t \in \FL{n}$ and for all $\uv\in L^n$, the following holds:
 \[
 \big[t^-(\uv);t^+(\uv)\big]\subseteq L_1 \text{\; or \;} \big[t^-(\uv);t^+(\uv)\big]\subseteq L_2.
 \]
 Moreover, in the first case there is a tuple $\uv' \in L_1^n$ such that $t^{\mp}(\uv')=t^{\mp}(\uv)$, and similarly, in the second case there is a tuple $\uv' \in L_2^n$ such that $t^{\mp}(\uv')=t^{\mp}(\uv)$.
 \end{lemma}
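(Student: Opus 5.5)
The plan is to do a case analysis based on the positions of the entries of $\uv$ relative to $w$, and then project into $L_1$ or $L_2$. In $L=L_1\oplus_w L_2$, every element is comparable with $w$. Also, the maps $p_1\colon L\to L_1$, $p_1(x)=x\wedge w$, and $p_2\colon L\to L_2$, $p_2(x)=x\vee w$, are lattice homomorphisms. For $p_1$: it is the identity on $L_1$ and sends $L_2$ to $w$; one checks that it preserves joins and meets directly, since for $x\in L_1$ and $y\in L_2$ we have $x\vee y=y$ and $x\wedge y=x$. Dually for $p_2$. They are not $(0,1)$-homomorphisms into $L$, but $p_1$ preserves $0$ and $p_2$ preserves $1$. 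The empty meet or join in $t^\pm$ needs separate care, discussed below.

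Let $I=\Set{i}{u_i\leq w}$ and set $a=t^-(\uv)$, $b=t^+(\uv)$, with $a\leq b$ by Proposition~\ref{prop_MN}. Since $a$ and $b$ are each comparable with $w$, there are three possibilities. If $b\leq w$ then $[a;b]\subseteq L_1$. If $a\geq w$ then $[a;b]\subseteq L_2$. The remaining case $a<w<b$ must be excluded, and this is the main step. Write $t^-=t^-_{\mathcal M}$ and $t^+=t^+_{\mathcal N}$ with $\mathcal N=\Set{\compl J}{J\notin\mathcal M}$.

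Since $a<w$, every $M\in\mathcal M$ has $\bigwedge_{i\in M}u_i<w$, so $M\cap I\neq\emptyset$; if moreover some $u_i$ with $i\in M$ is strictly below $w$, the meet lies in $L_1\setminus\{w\}$. Since $b>w$, every $N\in\mathcal N$ has $\bigvee_{i\in N}u_i>w$, so $N\not\subseteq I$. Now compare $I$ with $\mathcal M$. If $I\in\mathcal M$, I expect a contradiction from $\compl{I}$ via $\mathcal N$. If $I\notin\mathcal M$, then $\compl I\in\mathcal N$, but $\compl I\subseteq\compl I$ consists of indices with $u_i>w$, which is fine; so this case must instead be handled through the mixed elements. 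To deal with entries equal to $w$ exactly, I will use the partition into $I^{<}=\{u_i<w\}$, $I^{=}=\{u_i=w\}$ and $I^{>}=\{u_i>w\}$. Then $a<w$ forces every $M\in\mathcal M$ to meet $I^<$, so $I^=\cup I^>\notin\mathcal M$, hence $I^<\in\mathcal N$. But $\bigvee_{i\in I^<}u_i<w$ (or equals $0\leq w$ if $I^<$ is empty), giving $b\leq w$, a contradiction. This settles the case, and the empty join/meet conventions are handled uniformly.

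For the ``moreover'' part, in the case $[a;b]\subseteq L_1$ take $\uv'=p_1(\uv)=(u_i\wedge w)_i$. Since $p_1$ is a lattice homomorphism fixing $L_1$, we get $t^\pm(\uv')=p_1(t^\pm(\uv))=t^\pm(\uv)$, because $a,b\in L_1$. The only point to check is the empty meet, $\bigwedge\emptyset=1_L$, whose image $p_1(1_L)=w=1_{L_1}$ is again the evaluation in $L_1$. Dually, in the $L_2$ case use $p_2$, where $p_2(0_L)=w=0_{L_2}$. I expect the exclusion of $a<w<b$ to be the only genuine obstacle. The rest is bookkeeping with homomorphisms and bounded-term conventions.
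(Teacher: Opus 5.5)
Your proof is correct and follows the paper's argument. From $t^-(\uv)<w$, every $M\in\mathcal M$ meets $I^<$, so $\compl{I^<}\notin\mathcal M$ and hence $I^<\in\mathcal N$, which gives $t^+(\uv)\le\bigvee_{i\in I^<}u_i\le w$. The paper uses a chosen subset $N_0=\{i(M)\}\subseteq I^<$ in place of $I^<$, a choice your version shows is unnecessary. The ``moreover'' part uses the same retractions $x\mapsto x\wedge w$ and $x\mapsto x\vee w$.

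Two small fixes are needed. A join of elements strictly below $w$ may equal $w$, so write $\le w$ rather than $<w$ there (only $\le$ is used). Also drop the abandoned case split on whether $I\in\mathcal M$.
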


\begin{proof}
Let $w$ denote the ``gluing point'' as in Definition~\ref{def_ordsum}; furthermore, let $\mathcal{M}$ and $\mathcal{N}$ be the upsets corresponding to $t^-$ and $t^+$, i.e., let $t^-=t^-_{\mathcal{M}}$ and $t^+=t^+_{\mathcal{N}}$, respectively (see Section~\ref{subsect_DNF}). 
It is clear that if $t^-_{\mathcal{M}}(\uv)\in L_2$, then $t^+_{\mathcal{N}}(\uv)\in L_2$, and the claim follows. 

Now suppose that $t^-_{\mathcal{M}}(\uv)<w$; we will show that in this case $t^+_{\mathcal{N}}(\uv)\leq w$. Since
\[
t^-_{\mathcal{M}}(\uv)=\bigvee_{M\in\mathcal{M}}\bigwedge_{i\in M}u_i < w,
\]
we have $\bigwedge_{i\in M}u_i<w$ for every $M\in\mathcal{M}$. This implies that for every $M\in\mathcal{M}$, there exists $i(M)\in M$ such that $u_{i(M)}<w$. 

Recall from Proposition~\ref{prop_MN} that $\mathcal{N}=\Set{\compl{I}}{I\notin\mathcal{M}}$. 
Let $N_0=\Set{i(M)}{M\in\mathcal{M}}$; we claim that $N_0\in\mathcal{N}$. 
Indeed, if $N_0\notin\mathcal{N}$, then $\compl{N_0}\in\mathcal{M}$, hence there exists $i(\compl{N_0})\in \compl{N_0}$ such that $u_{i(\compl{N_0})}<w$. 
However, by the definition of $N_0$, we also have $i(\compl{N_0})\in N_0$, which is a contradiction. 

Therefore $N_0\in\mathcal{N}$, and this allows us to estimate $t^+_{\mathcal{N}}(\uv)$ by the meetand corresponding to $N_0$:
\[
t^+_{\mathcal{N}}(\uv) = \bigwedge_{N\in\mathcal{N}}\bigvee_{i\in N}u_i \leq \bigvee_{i\in N_0}u_i.
\]
For every $i\in N_0$ we have $u_i<w$ by the definition of $N_0$, thus $t^+_{\mathcal{N}}(\uv) \leq w$, and this implies that $[t^-(\uv);t^+(\uv)]\subseteq L_1$.

The last claim of the lemma follows by applying the retraction homomorphisms $L \to L_1,\ u \mapsto u \wedge w$, respectively $L \to L_2,\ u \mapsto u \vee w$.
\end{proof}

\begin{theorem}\label{thm_oplusc}
	If $L_1$ and $L_2$ are \tworepr{} finite lattices, then $L_1\oplus_w L_2$ is also \tworepr{}.
\end{theorem}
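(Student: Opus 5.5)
We construct the representation explicitly and then verify it with Corollary~\ref{cor_intabst}, whose interval condition is made tractable by Lemma~\ref{lem_oplusc}.

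\textbf{Construction.} Take \twogood{}s $\phi_1\colon L_1\hookrightarrow\Eq{A_1}$ and $\phi_2\colon L_2\hookrightarrow\Eq{A_2}$. We may assume $A_1\cap A_2=\emptyset$ except for one common point, or keep them disjoint; the cleanest choice is a product set. Put $A=A_1\times A_2$ and let $\pi_1,\pi_2$ be the kernels of the two projections. On $L_1$ we embed via
\[
\psi(u)=\Set{((a,b),(a',b'))}{(a,a')\in\phi_1(u),\ b=b'},
\]
which maps $L_1$ onto a copy of $\phi_1(L_1)$ inside $[0_A;\pi_2]$; in particular $\psi(1_{L_1})=\pi_2$. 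On $L_2$ we embed via
\[
\psi(u)=\Set{((a,b),(a',b'))}{(b,b')\in\phi_2(u)},
\]
which maps $L_2$ onto $[\pi_2;1_A]$-type relations, namely the preimages of $\phi_2(L_2)$ under $\pi_2$. The two definitions agree at $w$, because both give $\pi_2$. Hence $\psi$ is a $(0,1)$-embedding of $L_1\oplus_w L_2$. Note that $[0_A;\pi_2]_{\Eq{A}}\cong\Eq{A_1}^{|A_2|}$ and $[\pi_2;1_A]\cong\Eq{A_2}$ by Fact~\ref{fact_intervals-in-Eq(A)}.

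\textbf{Verification.} We check condition~\ref{item_interval-condition-phi-b} of Corollary~\ref{cor_intabst}. Fix $t$ and $\uv$. By Lemma~\ref{lem_oplusc}, the interval $[t^-(\uv);t^+(\uv)]$ lies in $L_1$ or in $L_2$, and there is $\uv'$ entirely in that summand with the same values of $t^\mp$.

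\emph{Case $L_2$.} The interval $[\psi t^-(\uv');\psi t^+(\uv')]_{\Eq{A}}$ lies above $\pi_2$. The map $[\pi_2;1_A]\cong\Eq{A_2}$ carries $\psi$ to $\phi_2$, so the interval is the preimage of $[\phi_2 t^-;\phi_2 t^+]\subseteq\phi_2(L_2)$, which is contained in $\psi(L_2)$.

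\emph{Case $L_1$.} The interval lies below $\pi_2$, and there it is the diagonal product of copies of $[\phi_1t^-;\phi_1t^+]$, one for each $b\in A_2$. This is the main obstacle. The interval in $\Eq{A}$ is the full product $\prod_{b}[\phi_1t^-;\phi_1t^+]$, not only its diagonal, so off-diagonal relations (different $\phi_1$-values on different fibres) escape $\psi(L_1)$ unless the interval is trivial. The product construction therefore fails whenever $|A_2|>1$ and $L_1$ is nondistributive.

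\textbf{Repair.} Use a disjoint-union construction instead, so that the bottom part has a single fibre. Let $A=A_1\,\dot\cup\,A_2'$ with $A_2'=A_2\setminus\{c\}$ for some fixed $c\in A_2$, and identify $c$ with all of $A_1$. For $u\in L_1$ set
\[
\psi(u)=\phi_1(u)\cup 0_A,
\]
so that $\psi(w)=A_1^2\cup0_A=:\kappa$. For $u\in L_2$ let $\psi(u)$ be the equivalence obtained from $\phi_2(u)$ by blowing up $c$ to the block $A_1$. Then $[\kappa;1_A]\cong\Eq{A_2}$ by collapsing $A_1$ to $c$, and $[0_A;\kappa]\cong\Eq{A_1}$, with $\psi$ corresponding to $\phi_2$ and $\phi_1$ respectively. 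With this embedding both cases of the verification go through exactly as in the $L_2$ case above, using Lemma~\ref{lem_oplusc} and the fact that $\phi_1$ and $\phi_2$ are \twogood{}s. Corollary~\ref{cor_intabst} then shows that $\psi$ is a \twogood{}, so $L_1\oplus_w L_2$ is \tworepr{}.
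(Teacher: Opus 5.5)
Your repaired construction is the paper's construction. Taking $A=A_1\,\dot\cup\,(A_2\setminus\{c\})$ and blowing up $c$ to the block $A_1$ is the same as gluing $A_1$ and $A_2$ at one point, where the paper uses $\iota_2(\rho_2)=\mathop{tr}(A_1^2\cup\rho_2)$. The verification also matches the paper's: $\psi$ maps into the intervals $[0_A;\kappa]\cong\Eq{A_1}$ and $[\kappa;1_A]\cong\Eq{A_2}$, and the condition of Corollary~\ref{cor_intabst} follows from Lemma~\ref{lem_oplusc} together with the convexity of these images. You correctly diagnose the initial product attempt as failing, so it should simply be deleted from the write-up.
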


\begin{proof}
	Since $L_1$ and $L_2$ are \tworepr{}, there exist finite sets $A_1$ and $A_2$ and \twogood{}s $\varphi_1\colon L_1\hookrightarrow \Eq{A_1}$, $\varphi_2\colon L_2\hookrightarrow \Eq{A_2}$. 
	We can suppose without loss of generality that $A_1$ and $A_2$ have only one common element: $A_1 \cap A_2 = \{a\}$.
	Let $A=A_1\cup A_2$ and consider the following embeddings:
	\begin{align*}
		\iota_1&\colon \Eq{A_1}\hookrightarrow \Eq{A},\quad \rho_1\mapsto \rho_1\cup 0_{A_2\setminus \{a\}},\\
		\iota_2&\colon \Eq{A_2}\hookrightarrow \Eq{A},\quad \rho_2\mapsto \mathop{tr}(A_1^2 \cup \rho_2),
	\end{align*}
	where $\mathop{tr}(\rho)$ denotes the transitive closure of the relation $\rho$. 
	Hence, the blocks of the partition induced by $\iota_1(\rho_1)$ are precisely the blocks of $\rho_1$ together with the singleton blocks $\{x\}$, where $x\in A_2\setminus \{a\}$. 
	Moreover, the blocks of the partition induced by $\iota_2(\rho_2)$ are $A_1\cup (a/\rho_2)$ and $x/\rho_2$, where $x\in A_2\setminus(a/\rho_2)$.
	Let $\vartheta \in Eq(A)$ be the equivalence relation whose only potentially non-singleton block is $A_1$.
	Then the image of $\iota_1$ is $[0_A;\vartheta]_{\Eq{A}}$, and the image of $\iota_2$ is $[\vartheta;1_A]_{\Eq{A}}$.
	
	We obtain the following $(0,1)$-embedding:
	\[
		\varphi\colon L_1\oplus_w L_2\hookrightarrow \Eq{A},\quad 
		\varphi(x)=\begin{cases}
			\iota_1(\varphi_1(x)) &\text{ if } x \in L_1,\\
			\iota_2(\varphi_2(x)) &\text{ if } x\in L_2.
		\end{cases}
	\]
	If $x=w$, the common element of $L_1$ and $L_2$, then $\iota_1(\varphi_1(x)) = \iota_2(\varphi_2(x)) = \vartheta$.
	
	Now we show that the embedding $\varphi$ is a \twogood{}. By Corollary~\ref{cor_intabst}, it suffices to prove that for every $t \in \FL{n}$ and all $u_1,\dots,u_n\in L_1\oplus_w L_2$, we have 
	\[ 
		\big[\varphi(t^-(\uv));\varphi(t^+(\uv))\big]_{\Eq{A}}\subseteq \varphi(L_1\oplus_w L_2).
	\]
	By Lemma~\ref{lem_oplusc}, either $t^-(\uv),t^+(\uv)\in L_1$ or $t^-(\uv),t^+(\uv)\in L_2$, and we may assume $\uv \in L_1^n$ or $\uv \in L_2^n$, respectively.
	The two cases are similar, so we only give the details for the first case. 
	Let $\sigma\in\Eq{A}$ satisfy $\varphi(t^-(\uv))\leq \sigma\leq \varphi(t^+(\uv))$. 
	For $x\in L_1$, we have $\varphi(x)=\iota_1(\varphi_1(x))$, thus $\iota_1(\varphi_1(t^-(\uv)))\leq \sigma\leq\iota_1(\varphi_1 (t^+(\uv)))$.
	The image of $\iota_1$ is convex (namely, the interval $[0_A;\vartheta]_{\Eq{A}}$), hence it follows that $\sigma=\iota_1(\sigma_1)$ for some $\sigma_1\in \big[\varphi_1(t^-(\uv));\varphi_1(t^+(\uv))\big]_{\Eq{A_1}}$. 
	Since $\varphi_1$ is a \twogood{} of $L_1$ into $\Eq{A_1}$, there exists $v\in L_1$ such that $\sigma_1=\varphi_1(v)$.
	This implies $\sigma=\varphi(v) \in \varphi(L_1\oplus_w L_2)$, as claimed. 
\end{proof}

Using Remark~\ref{rem_oplus}, Corollary~\ref{cor_dist} and Theorem~\ref{thm_oplusc}, we obtain the following corollary.

\begin{corollary}\label{cor_oplus}
  If $L_1$ and $L_2$ are \tworepr{} finite lattices, then $L_1\oplus L_2$ is also \tworepr{}.
\end{corollary}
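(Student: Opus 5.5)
The plan is to reduce the ordinal sum to glued sums, exactly as Remark~\ref{rem_oplus} suggests. That remark gives
\[
L_1\oplus L_2\cong (L_1\oplus_v\mathbf{2})\oplus_w L_2,
\]
so it suffices to show that each building block is \tworepr{} and then apply Theorem~\ref{thm_oplusc} twice.

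The steps, in order, are as follows.

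First, the two-element lattice $\mathbf{2}$ is distributive. Hence it is \tworepr{} by Corollary~\ref{cor_dist}; concretely, $\mathbf{2}\cong\Eq{[2]}=\Con{([2];\emptyset)}$.

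Second, $L_1$ and $\mathbf{2}$ are both \tworepr{} finite lattices. Applying Theorem~\ref{thm_oplusc} to them shows that the glued sum $L_1\oplus_v\mathbf{2}$ is \tworepr{}. In this lattice the top of $L_1$ is identified with the bottom of $\mathbf{2}$, so the effect is to adjoin a new top element above $1_{L_1}$.

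Third, $L_1\oplus_v\mathbf{2}$ and $L_2$ are both \tworepr{}. Applying Theorem~\ref{thm_oplusc} once more, we glue the new top of $L_1\oplus_v\mathbf{2}$ to $0_{L_2}$. This shows that $(L_1\oplus_v\mathbf{2})\oplus_w L_2$ is \tworepr{}.

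Finally, \tworepry{} is clearly invariant under isomorphism: compose a \twogood{} with the isomorphism. By Remark~\ref{rem_oplus}, the lattice from the third step is isomorphic to $L_1\oplus L_2$, which completes the argument.

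The only point needing care is checking the isomorphism in Remark~\ref{rem_oplus}. The element $1_{L_1}$ is covered by the new element $w$, which becomes $0_{L_2}$. Thus every element of $L_1$ lies below every element of $L_2$, and the two copies stay disjoint, which is precisely Definition~\ref{def_ordsum}. Since the remark is already stated in the paper, I do not expect a genuine obstacle; the proof is a direct chaining of earlier results.
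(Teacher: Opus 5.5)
Your proposal is correct and is exactly the paper's argument. The paper derives this corollary by combining Remark~\ref{rem_oplus}, Corollary~\ref{cor_dist} (for $\mathbf{2}$) and Theorem~\ref{thm_oplusc} applied twice, just as you do.
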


Proposition~\ref{prop_filters} also gives a partial converse: if $L_1\oplus L_2$ (or $L_1\oplus_w L_2$) is \tworepr{}, then so is $L_2$ (however, as Remark~\ref{rem_M_3^2+1} shows, $L_1$ is not necessarily \tworepr{}).

\subsection{Direct product}

As Example~\ref{example_M_3^2} shows, the class of \tworepr{} lattices is not closed under direct products: $M_3 \times M_3$ is not \tworepr{}, though $M_3$ is clearly \tworepr{}, as $M_3\cong \Eq{[3]} = \Con{([3];\emptyset)}$.
We generalize this example to direct products of larger partition lattices in the next proposition.

\begin{proposition}\label{prop_Eq(A1)xEq(A2)}
	If $m_1,m_2 \geq 3$, then $\Eq{[m_1]} \times \Eq{[m_2]}$ is not \tworepr{}.
\end{proposition}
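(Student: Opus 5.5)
We generalize the argument of Example~\ref{example_M_3^2}: find a $(0,1)$-sublattice isomorphic to $M_3$ in $L=\Eq{[m_1]}\times\Eq{[m_2]}$, apply Proposition~\ref{prop_median} to conclude that $L$ would have to be isomorphic to a partition lattice, and then rule this out.

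\emph{Step 1.} Since $m_1,m_2\geq 3$, each $\Eq{[m_i]}$ contains a $(0,1)$-sublattice $\{0,\rho_i,\sigma_i,\tau_i,1\}\cong M_3$. Such elements are obtained by grouping the elements of $[m_i]$ into three nonempty parts $B_1,B_2,B_3$ and letting $\rho_i$ have blocks $B_1\cup B_2$ and singletons elsewhere. Concretely, let $\rho_i$ collapse $B_1\cup B_2$, let $\sigma_i$ collapse $B_2\cup B_3$, and let $\tau_i$ collapse $B_1\cup B_3$. Pairwise joins are then $1$. Pairwise meets need not be $0$, however, if some $B_j$ has more than one element. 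It is simpler to use coatoms and atoms in the standard way: take three distinct elements $a,b,c$ and the partitions $\{a,b\}\cup\{\text{rest}\}$, and so on. This, too, fails to give a $(0,1)$-sublattice unless $m_i=3$.

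A better choice is to embed $M_3$ diagonally as in the example, using the pair of $M_3$'s rather than requiring $(0,1)$ inside each factor. Consider $\{(0,0),(\alpha,\beta),(\alpha',\beta'),(\alpha'',\beta''),(1,1)\}$. We need pairwise meets equal to $(0,0)$ and pairwise joins equal to $(1,1)$ coordinatewise, so each coordinate must contain three elements that pairwise meet to $0$ and pairwise join to $1$. That is, we need a $(0,1)$-$M_3$ inside $\Eq{[m]}$ for each $m\geq 3$.

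This exists. Partition $[m]$ into three nonempty parts $B_1,B_2,B_3$ with $\abs{B_1}=\abs{B_2}=k\ge1$ (more carefully, fix bijections between the parts). A cleaner classical construction uses the partitions of $[m]$ given as kernels of three maps, as for $\Eq{[4]}\supseteq M_3$ with $\{12|34,13|24,14|23\}$. For general $m$, take $\rho=\{1,2\}\{3,\dots,m\}$, $\sigma=\{1,3\}\{2,4,\dots,m\}$ and $\tau=\{1\}\{2,3\}\{4,\dots,m\}$... I would verify one explicit triple: $\rho=\{1,2\}|\{3,\dots,m\}$, $\sigma=\{2,3\}|\{1,4,\dots,m\}$, $\tau=\{1,3,4,\dots,m\}|\{2\}$. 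The check is as follows. Pairwise meets are $0$ because each of $\rho\wedge\sigma$, $\rho\wedge\tau$ and $\sigma\wedge\tau$ has at most the block $\{4,\dots,m\}$ nontrivial when $m\geq 5$, and this breaks the argument. So I would handle the case $m=3$ directly, and for larger $m$ look for such triples among partitions into intervals. Checking that such a triple exists is the step I expect to be the main obstacle. If it fails, there is a fallback: reduce to filters. Given $m_i\ge3$, choose $\theta_i\in\Eq{[m_i]}$ with exactly three blocks, so that $[\theta_i;1]\cong\Eq{[3]}=M_3$. Then $[(\theta_1,\theta_2);1]_L\cong M_3\times M_3$, and by Proposition~\ref{prop_filters}, if $L$ were \tworepr{} then so would be $M_3\times M_3$, which contradicts Example~\ref{example_M_3^2}. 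This fallback in fact avoids all of the above difficulty, so I would adopt it as the main proof.

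\emph{Step 2 (final argument).} By Fact~\ref{fact_intervals-in-Eq(A)}, the filter above $\theta_i$ is $\Eq{[3]}\cong M_3$. Filters of a direct product are products of filters, so $[(\theta_1,\theta_2);(1,1)]_L\cong M_3\times M_3$. Proposition~\ref{prop_filters} together with Example~\ref{example_M_3^2} then completes the proof.
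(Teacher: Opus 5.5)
Your final argument, the fallback you adopt in Step~2, is correct, and it takes a different route from the paper. Choose $\theta_i\in\Eq{[m_i]}$ with exactly three blocks. Then $[\theta_i;1]\cong\Eq{[3]}\cong M_3$ by Fact~\ref{fact_intervals-in-Eq(A)}, so $[(\theta_1,\theta_2);1_L]_L\cong M_3\times M_3$, and Proposition~\ref{prop_filters} together with Example~\ref{example_M_3^2} finishes the proof.

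The paper instead works inside $L$ itself. It gives an explicit, parity-dependent choice of three partitions forming a $(0,1)$-sublattice $M_3$ of $\Eq{[m]}$ for every $m\geq 3$ (exactly the object your Step~1 was looking for). It embeds this $M_3$ diagonally into the product and invokes Proposition~\ref{prop_median} to conclude that $L$ would have to be a partition lattice. That is impossible because partition lattices are directly indecomposable.

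Your reduction is shorter and more elementary. It needs neither the explicit $M_3$ construction nor the direct indecomposability of partition lattices, which the paper quotes as a known fact. It only uses the count $25$ from Example~\ref{example_M_3^2}, which rests on Proposition~\ref{prop_median} anyway.

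The paper's route gains two things. First, it produces a $(0,1)$-copy of $M_3$ in every $\Eq{[m]}$ with $m\geq 3$, which is reused in Proposition~\ref{prop_spec2(Eq)}. Second, its argument applies verbatim to any product of two lattices that each contain a $(0,1)$-sublattice isomorphic to $M_3$.

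In a write-up, drop the exploratory material of Step~1. It contains abandoned and incorrect attempts, for example the triple whose pairwise meets are not $0$ when $m\geq 5$.
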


\begin{proof}
	First we show that if $m \geq 3$, then $\Eq{[m]}$ contains a $(0,1)$-sublattice $\{0_{[m]},\rho,\sigma,\tau,1_{[m]}\}$ that is isomorphic to $M_3$.
	We distinguish two cases according to the parity of $m$.
	If $m$ is even, say $m=2k$, then let $\rho,\sigma,\tau \in \Eq{[m]}$ be the equivalence relations whose blocks are as follows (the left-hand diagram in Figure~\ref{fig_uvw-even-odd} illustrates the case $m=10$):
	\[\renewcommand{\arraystretch}{1.3}
	\begin{array}{l@{\,}c@{}l}
		\text{blocks of } &\rho&\colon\; \{1,k+1\},\, \{2,k+2\}, \dots, \{k-1,2k-1\},\, \{k,2k\};\\
		\text{blocks of } &\sigma&\colon\; \{1,k+2\},\, \{2,k+3\}, \dots, \{k-1,2k\},\, \{k\},\, \{k+1\};\\
		\text{blocks of } &\tau&\colon\; \{1,\dots,k\},\, \{k+1,\dots,2k\}.
	\end{array}
	\]
	If $m$ is odd, say $m=2k+1$, then let $\rho,\sigma,\tau \in \Eq{[m]}$ be the equivalence relations whose blocks are as follows (the right-hand diagram in Figure~\ref{fig_uvw-even-odd} illustrates the case $m=11$):
	\[\renewcommand{\arraystretch}{1.3}
	\begin{array}{l@{\,}c@{}l}
		\text{blocks of } &\rho&\colon\; \{1,k+1\},\, \{2,k+2\}, \dots, \{k-1,2k-1\},\, \{k,2k\},\, \{2k+1\};\\
		\text{blocks of } &\sigma&\colon\; \{1,k+2\},\, \{2,k+3\}, \dots, \{k-1,2k\},\, \{k, 2k+1\},\, \{k+1\};\\
		\text{blocks of } &\tau&\colon\; \{1,\dots,k\},\, \{k+1,\dots,2k+1\}.
	\end{array}
	\]
	It is straightforward to verify that $\rho \wedge \sigma = \rho \wedge \tau = \sigma \wedge \tau = 0_{[m]}$ and $\rho \vee \sigma = \rho \vee \tau = \sigma \vee \tau = 1_{[m]}$, thus the sublattice $\{0_{[m]},\rho,\sigma,\tau,1_{[m]}\}$ is indeed isomorphic to $M_3$.
	
	Now let us consider the direct product $L := \Eq{[m_1]} \times \Eq{[m_2]}$.
	If $\rho_1,\sigma_1,\tau_1 \in \Eq{[m_1]}$ and $\rho_2,\sigma_2,\tau_2 \in \Eq{[m_2]}$ are constructed as above, then the elements $\rho=(\rho_1,\rho_2)$, $\sigma=(\sigma_1,\sigma_2)$, $\tau=(\tau_1,\tau_2)$ yield a sublattice $\{0_L,\rho,\sigma,\tau,1_L\} \leq L$ that is isomorphic to $M_3$.
	Therefore, by Proposition~\ref{prop_median}, if $L$ is \tworepr{}, then $L$ is isomorphic to a partition lattice.
	However, this is not possible, since partition lattices are directly indecomposable.
\end{proof}

\begin{figure}[t]
	\centering
	\begin{tikzpicture}[x=0.5cm,y=0.5cm, every node/.style={font=\small}]
	
	\begin{scope}[xshift=0cm]
	
	\foreach \i in {1,...,10}{
	    \pgfmathtruncatemacro{\x}{\i-1}
	    \ifnum\i<6
	        \node[circle, draw, fill=black, inner sep=1.3pt] (a\i) at (\x,0) {};
	    \else
	        \node[circle, draw, fill=white, inner sep=1.3pt] (a\i) at (\x,0) {};
	    \fi
	}
	
	\foreach \p/\q in {1/6,2/7,3/8,4/9,5/10}{
	    \draw (a\p) to[out=70,in=110] (a\q);
	}
	
	\foreach \p/\q in {1/7,2/8,3/9,4/10}{
	    \draw[dashed] (a\p) to[out=-70,in=-110] (a\q);
	}
	
	\end{scope}
	
	\begin{scope}[xshift=6.2cm]
	
	\foreach \i in {1,...,11}{
	    \pgfmathtruncatemacro{\x}{\i-1}
	    \ifnum\i<6
	        \node[circle, draw, fill=black, inner sep=1.3pt] (b\i) at (\x,0) {};
	    \else
	        \node[circle, draw, fill=white, inner sep=1.3pt] (b\i) at (\x,0) {};
	    \fi
	}
	
	\foreach \p/\q in {1/6,2/7,3/8,4/9,5/10}{
	    \draw (b\p) to[out=70,in=110] (b\q);
	}
	
	\foreach \p/\q in {1/7,2/8,3/9,4/10,5/11}{
	    \draw[dashed] (b\p) to[out=-70,in=-110] (b\q);
	}
	
	\end{scope}
	
	\end{tikzpicture}
	\caption{Illustration of the equivalence relations $\rho$, $\sigma$, and $\tau$ in the proof of Proposition~\ref{prop_Eq(A1)xEq(A2)} in the cases $m=10$ (left) and $m=11$ (right). Filled and unfilled points indicate the two blocks of $\tau$; solid upper arcs indicate the non-singleton blocks of $\rho$; dashed lower arcs indicate the non-singleton blocks of $\sigma$.}
	\label{fig_uvw-even-odd}
\end{figure}

\begin{lemma}\label{lem_rho_lifting}
  Let $\varphi\colon L\hookrightarrow\Eq{[m]}$ be a \twogood{}, where $m$ is a natural number. 
  For every $\rho \in\Eq{[m]}$, define relations $\rho_0,\rho_1$ on $[m+1]$ by
  \begin{align*}
    \rho_0 &= \rho \cup \{m+1\}^2;\\
    \rho_1 &= \mathop{tr}\big(\rho \cup \{m,m+1\}^2\big),
  \end{align*}
  where $\mathop{tr}$ denotes the transitive closure.
  Then 
  \[
    \hat\varphi\colon L\times \mathbf{2} \hookrightarrow\Eq{[m+1]},\quad 
    \hat\varphi(u,v)=\varphi(u)_v
  \]
 is also a \twogood{}.
\end{lemma}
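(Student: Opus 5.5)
The plan is to avoid the interval criterion of Corollary~\ref{cor_intabst} and instead construct an explicit $2$-valued unary algebra on $[m+1]$ whose congruence lattice is exactly $\hat\varphi(L\times\mathbf{2})$.

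\emph{Setting up the algebra.} Since $\varphi$ is a \twogood{}, there is $F\subseteq T_2([m])$ with $\Con{([m];F)}=\varphi(L)$. For each $f\in F$, define $\hat f\colon[m+1]\to[m+1]$ by $\hat f(x)=f(x)$ for $x\le m$ and $\hat f(m+1)=f(m)$. The range of $\hat f$ equals that of $f$, so $\hat f\in T_2([m+1])$. Add one more operation $g$, with $g(m+1)=m+1$ and $g(x)=m$ for $x\le m$. Then $\alpha_g$ is the atom identifying $m$ and $m+1$, and $\omega_g$ is the coatom with blocks $[m]$ and $\{m+1\}$. Let $\hat F=\{\hat f: f\in F\}\cup\{g\}$.

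\emph{Computing $\Con{([m+1];\hat F)}$.} By the lemma characterizing preservation by $2$-valued maps, $\sigma\lhd g$ holds iff $(m,m+1)\in\sigma$ or $\{m+1\}$ is a block of $\sigma$. In the first case $\sigma=\rho_1$, and in the second $\sigma=\rho_0$, where $\rho=\sigma\cap[m]^2\in\Eq{[m]}$; conversely, every $\rho_0$ and $\rho_1$ is preserved by $g$. It remains to check that, for $v\in\{0,1\}$, $\rho_v\lhd\hat f$ iff $\rho\lhd f$.
\begin{itemize}
\item[$\Rightarrow$] All values of $\hat f$ lie in $[m]$, so restricting to pairs from $[m]$ gives $f(\rho)\subseteq\rho_v\cap[m]^2=\rho$.
\item[$\Leftarrow$] For $v=0$, the only extra pair is $(m+1,m+1)$, which is trivially mapped into $\rho_0$. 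For $v=1$, the extra pairs are $(m+1,x)$ (and their reverses) with $x\in m/\rho$. These map to $(f(m),f(x))\in\rho$, because $(m,x)\in\rho$.
\end{itemize}
Hence $\Con{([m+1];\hat F)}=\{\varphi(u)_v : u\in L,\ v\in\{0,1\}\}$, which is exactly the image of $\hat\varphi$.

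\emph{$\hat\varphi$ is an embedding.} It remains to show that $\hat\varphi$ is an order-embedding, i.e.\ that $\rho_v\le\rho'_{v'}$ iff $\rho\le\rho'$ and $v\le v'$. This follows from three observations: $\rho_v\cap[m]^2=\rho$; the pair $(m,m+1)$ lies in $\rho_v$ iff $v=1$; and $\rho\le\rho'$ implies both $\rho_0\le\rho'_0\le\rho'_1$ and $\rho_1\le\rho'_1$. A bijective order-embedding onto a lattice is a lattice isomorphism, so $\hat\varphi$ is a lattice embedding. Its image is a congruence lattice of a $2$-valued unary algebra, so $\hat\varphi$ is a \twogood{}.

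The only step needing care is the case $v=1$ in the preservation check, where the transitive closure in the definition of $\rho_1$ must be unpacked. Everything else is routine.
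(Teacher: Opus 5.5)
Your proof is correct, but it takes a genuinely different route from the paper.

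The paper works at the level of the embedding. It first checks by hand that $\hat\varphi$ is a lattice homomorphism, using the identities \eqref{F:rho-a} and \eqref{F:rho-b}. It then verifies the interval criterion of Corollary~\ref{cor_intabst}. Each relevant interval has the form $[\kappa_\ell;\mu_\ell]_{\Eq{[m+1]}}$, where $\kappa=\varphi(t^-(\uv))$, $\mu=\varphi(t^+(\uv))$, and $\ell\in\{0,1\}$ is the common value of $t^\mp$ on the second coordinates. The set $\{\rho_\ell : \rho\in\Eq{[m]}\}$ is convex in $\Eq{[m+1]}$, and $[\kappa;\mu]_{\Eq{[m]}}\subseteq\varphi(L)$ because $\varphi$ is a \twogood{}.

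You instead exhibit the witnessing algebra explicitly. The extensions $\hat f$ carry over the constraints coming from $F$. The single extra operation $g$, through its atom $\alpha_g$ and coatom $\omega_g$, forces every congruence to be of the form $\rho_0$ or $\rho_1$. The homomorphism property then comes for free, because $\hat\varphi$ is an order-isomorphism onto $\Con{([m+1];\hat F)}$. That image is a sublattice of $\Eq{[m+1]}$, and this (not merely its being ``a lattice'') is what guarantees that meets and joins in the image agree with those of $\Eq{[m+1]}$. You should state this explicitly.

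Your argument is more elementary: it uses only the preservation lemma for $2$-valued maps, not Theorem~\ref{thm_int}. It is also constructive, giving an algebra with $\abs{F}+1$ operations, in the same spirit as the proofs of Theorems~\ref{thm_ideals} and~\ref{thm_antenna-product}. The paper's route parallels the proof of Theorem~\ref{thm_oplusc}. The identities \eqref{F:rho-a} and \eqref{F:rho-b} it establishes along the way are reused in the proof of Proposition~\ref{prop_spec2(D)}, and your approach does not supply them.
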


\begin{proof} 
	It is straightforward to check that the relations $\rho_0$ and $\rho_1$ are equivalence relations for every $\rho\in\Eq{[m]}$, and the mappings 
	\begin{align*}
		\iota_0\colon \Eq{[m]}\hookrightarrow\Eq{[m+1]},\ &\rho\mapsto\rho_0,\\
		\iota_1\colon \Eq{[m]}\hookrightarrow\Eq{[m+1]},\ &\rho\mapsto\rho_1
	\end{align*}
	are embeddings.
	(Observe that $\rho_0$ adds the singleton block $\{m+1\}$ to $\rho$, while $\rho_1$ is obtained by appending $m+1$ to the block $m/\rho$. Note also the similarity to the embeddings $\iota_1$ and $\iota_2$ defined in the proof of Theorem~\ref{thm_oplusc}.) 
	Moreover, for arbitrary equivalence relations $\rho,\sigma\in\Eq{[m]}$, we have 
	\begin{subequations}\label{F:rho} 
	\begin{align}
		\rho_0\wedge\sigma_1 &= (\rho\wedge\sigma)_0 = \rho_0\wedge\sigma_0,  \label{F:rho-a}\\
		\rho_0\vee\sigma_1 &= (\rho\vee\sigma)_1 = \rho_1\vee\sigma_1.    \label{F:rho-b}
	\end{align}
	\end{subequations}
	
	To prove that $\hat{\phi}$ is a homomorphism, choose arbitrary elements $(u_1,v_1)$ and $(u_2,v_2)$ in $L\times\mathbf{2}$. Then 
	\begin{align*}
		\hat{\phi}((u_1,v_1)\wedge(u_2,v_2)) 
		&= \hat{\phi}(u_1\wedge u_2,v_1\wedge v_2)\\
		&= \phi(u_1\wedge u_2)_{v_1\wedge v_2}\\
		&= (\phi(u_1)\wedge\phi(u_2))_{v_1\wedge v_2}\\
		&= \begin{cases}
			\phi(u_1)_0\wedge\phi(u_2)_0,&\mbox{if } 0\in\{v_1,v_2\},\\
			\phi(u_1)_1\wedge\phi(u_2)_1,&\mbox{if } v_1=v_2=1,
		\end{cases}\\
		\noalign{\noindent and }
		\hat{\phi}(u_1,v_1)\wedge\hat{\phi}(u_2,v_2) 
		&= \phi(u_1)_{v_1}\wedge\phi(u_2)_{v_2}.
	\end{align*}
	The equality of $\hat{\phi}((u_1,v_1)\wedge(u_2,v_2))$ and $\hat{\phi}(u_1,v_1)\wedge\hat{\phi}(u_2,v_2)$ is clear if $v_1=v_2$, otherwise it follows from \eqref{F:rho-a}. 
	For the join operation, we have 
	\begin{align*}
		\hat{\phi}((u_1,v_1)\vee(u_2,v_2)) 
		&= \hat{\phi}(u_1\vee u_2,v_1\vee v_2)\\
		&= \phi(u_1\vee u_2)_{v_1\vee v_2}\\
		&= (\phi(u_1)\vee\phi(u_2))_{v_1\vee v_2}\\
		&= \begin{cases}
			\phi(u_1)_1\vee\phi(u_2)_1,&\mbox{if } 1\in\{v_1,v_2\},\\
			\phi(u_1)_0\vee\phi(u_2)_0,&\mbox{if } v_1=v_2=0,
		\end{cases}\\
		\noalign{\noindent and }
		\hat{\phi}(u_1,v_1)\vee\hat{\phi}(u_2,v_2) 
		&= \phi(u_1)_{v_1}\vee\phi(u_2)_{v_2}.
	\end{align*}
	The equality of $\hat{\phi}((u_1,v_1)\vee(u_2,v_2))$ and $\hat{\phi}(u_1,v_1)\vee\hat{\phi}(u_2,v_2)$ 
	is obvious if $v_1=v_2$, otherwise it follows from \eqref{F:rho-b}. 
	Therefore, the mapping $\hat{\phi}$ is a homomorphism, and thus it is a $(0,1)$-embedding (injectivity is clear, as well as the images of the boundary elements). 
	
	By Corollary~\ref{cor_intabst}, we need to prove that for every natural number $n$, every term $t\in\FL{n}$, and all elements $(u_1,v_1),\dots,(u_n,v_n)\in L\times\mathbf{2}$, the interval
	\[
		I:=\big[\hat{\varphi}\big(t^-((u_1,v_1),\dots,(u_n,v_n))\big);\hat{\varphi}\big(t^+((u_1,v_1),\dots,(u_n,v_n))\big)\big]_{\Eq{[m+1]}}  
	\]
	is a subset of $\hat{\varphi}(L\times\mathbf{2})$. 
	Let us compute the endpoints of $I$:
	\begin{equation}\label{Eq:Formula}
		\hat{\phi}\big(t^{\mp}((u_1,v_1),\dots,(u_n,v_n))\big)
		= \hat{\phi}\big(t^{\mp}(\uv),t^{\mp}(\vv)\big)
		= \big(\phi(t^{\mp}(\uv))\big)_{\ell},
	\end{equation}
	where $\ell:=t^-(\vv)=t^+(\vv)$, as $\mathbf{2}$ is a distributive lattice.
	Setting $\kappa:=\phi(t^-(\uv))$ and $\mu:=\phi(t^+(\uv))$, we can write the interval as
	$I = [\kappa_{\ell};\mu_{\ell}]_{\Eq{[m+1]}}$.
	Let $\xi\in I$, i.e., $\kappa_{\ell}\subseteq\xi\subseteq\mu_{\ell}$, and let us verify that $\xi \in\hat{\phi}(L\times\mathbf{2})$.
	The image of $\iota_\ell$ is a convex set (in fact, an interval), thus there is an equivalence relation $\lambda\in[\kappa;\mu]_{\Eq{[m]}}$ such that $\xi=\lambda_{\ell}$.
	Since $\phi$ is a \twogood{}, we have $[\kappa;\mu]_{\Eq{[m]}}\subseteq\phi(L)$, hence $\lambda = \phi(\eta)$ for some $\eta \in L$.
	We can conclude that $\xi=\hat{\phi}(\eta,\ell)\in\hat{\phi}(L\times\mathbf{2})$, and this completes the proof.
\end{proof}

\begin{theorem}\label{thm_dirdist}
	If $L$ is a finite \tworepr{} lattice and $D$ is a finite distributive lattice, then $L\times D$ is also \tworepr{}.
\end{theorem}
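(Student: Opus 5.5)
The natural route is to reduce to Lemma~\ref{lem_rho_lifting} by embedding $D$ as a suitable piece of a Boolean lattice. The obstacle is that Lemma~\ref{lem_rho_lifting} only gives products with the full Boolean lattice $\mathbf{2}^k$, while a general finite distributive $D$ is merely a $(0,1)$-sublattice of such a power. So I plan to prove the statement in two stages. First, $L\times\mathbf{2}^k$ is \tworepr{}. Second, passing from $\mathbf{2}^k$ to a distributive $(0,1)$-sublattice $D\le\mathbf{2}^k$ preserves \tworepry{}.

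\textbf{Stage 1.} Let $\varphi\colon L\hookrightarrow\Eq{[m]}$ be a \twogood{}. Applying Lemma~\ref{lem_rho_lifting} $k$ times produces a \twogood{} $\psi\colon L\times\mathbf{2}^k\hookrightarrow\Eq{[m+k]}$.

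\textbf{Stage 2.} Since finite distributive lattices embed into finite Boolean lattices by Birkhoff's theorem, fix a $(0,1)$-embedding $D\le\mathbf{2}^k$. Set $\varphi'=\psi|_{L\times D}$; this is a $(0,1)$-embedding. We verify condition (b) of Corollary~\ref{cor_intabst}. Take $t\in\FL{n}$ and a tuple $((u_1,d_1),\dots,(u_n,d_n))\in(L\times D)^n$. Then
\[
t^{\mp}((u_i,d_i)_i)=(t^{\mp}(\uv),\ell),
\]
where $\ell=t^-(\underline d)=t^+(\underline d)\in D$, because $D$ is distributive.

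The interval $[\psi(t^-(\uv),\ell);\psi(t^+(\uv),\ell)]_{\Eq{[m+k]}}$ is contained in $\psi(L\times\mathbf{2}^k)$, because $\psi$ is a \twogood{}. By Corollary~\ref{cor_intabst}(c), this interval equals $\psi$ of the interval $[(t^-(\uv),\ell);(t^+(\uv),\ell)]$ in $L\times\mathbf{2}^k$. Every element of that interval has second coordinate $\ell\in D$, so the interval lies in $L\times D$. Hence the image interval lies in $\varphi'(L\times D)$, and $\varphi'$ is a \twogood{}.

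The main point to check carefully is the use of Corollary~\ref{cor_intabst}(c) for $\psi$ at tuples from the sublattice. This is legitimate because the same term and the same tuple, now viewed in $L\times\mathbf{2}^k$, give the same endpoints $t^{\mp}$, since $D$ is a sublattice. The argument also uses that intervals in a product with fixed second coordinate stay inside $L\times\{\ell\}$. Stage 1 is a routine induction, and the trivial case $D=\mathbf{1}$ is immediate.
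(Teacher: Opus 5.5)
Your proposal is correct and follows essentially the same route as the paper: embed $D$ as a $(0,1)$-sublattice of $\mathbf{2}^k$, iterate Lemma~\ref{lem_rho_lifting} to get a \twogood{} of $L\times\mathbf{2}^k$, restrict it to $L\times D$, and use $t^-(\underline{d})=t^+(\underline{d})$ in the distributive lattice $D$ to force every element of the relevant interval to have second coordinate in $D$. The only cosmetic difference is that you invoke Corollary~\ref{cor_intabst}(c), whereas the paper argues directly via monotonicity of $\varphi^{-1}$, which amounts to the same step.
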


\begin{proof}
	According to Birkhoff's representation theorem (see \cite[Chapter~II, \S1.2, Corollary~109]{Gratzer11}), we may regard $D$ as a $(0,1)$-sublattice of $\mathbf 2^r$ for some natural number $r$.
	Since $L$ is \tworepr{}, Lemma~\ref{lem_rho_lifting} implies that $L\times\mathbf 2^r$ is \tworepr{}, i.e., there exists a \twogood{} $\varphi\colon L \times \mathbf{2}^r \hookrightarrow \Eq{A}$. 
	We show that the restriction $\varphi\vert_{L\times D}\colon L\times D \hookrightarrow\Eq{A}$ is also a \twogood{}.
	Let $t \in \FL{n}$ and let $(u_i,v_i)\in L\times D$ for $i\in [n]$. 
	Then $t^{\mp}((u_1,v_1),\dots,(u_n,v_n))=(t^{\mp}(\uv),t^{\mp}(\vv))$.
	We have to verify that whenever $\sigma\in\Eq{A}$ satisfies 
	\[
		\varphi(t^-(\uv),t^-(\vv))\leq\sigma\leq \varphi (t^+(\uv),t^+(\vv)),
	\]
	there exists $(u,v)\in L\times D$ such that $\sigma=\varphi(u,v)$.
	We know that $\sigma=\varphi(u,v)$ for some $(u,v)\in L\times \mathbf 2^r$, as $\varphi$ is a \twogood{}, so we only have to show that $v \in D$.
	By the monotonicity of the map $\varphi^{-1}$, we have
	\[
		(t^-(\uv),t^-(\vv))\leq(u,v)\leq (t^+(\uv),t^+(\vv)).
	\]
	Distributivity of $D$ implies that $t^-(\vv)=t^+(\vv)$, hence $v=t^-(\vv)=t^+(\vv)\in D$, and thus $(u,v)\in L\times D$, as needed.
\end{proof}

\begin{proposition}\label{prop_dir}
	If the direct product $L_1\times L_2$ is \tworepr{}, then both $L_1$ and $L_2$ are \tworepr{}.
\end{proposition}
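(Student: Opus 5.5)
The plan is to realize each factor as a principal filter of the product and then invoke Proposition~\ref{prop_filters}. In $L_1\times L_2$, the principal filter $[(0_{L_1},1_{L_2});(1_{L_1},1_{L_2})]$ is isomorphic to $L_1$ via $u\mapsto (u,1_{L_2})$. Similarly, the filter $[(1_{L_1},0_{L_2});(1_{L_1},1_{L_2})]$ is isomorphic to $L_2$. Since $L_1\times L_2$ is \tworepr{}, Proposition~\ref{prop_filters} shows that every principal filter of it is \tworepr{}. The notion of \tworepry{} depends only on the isomorphism type, because composing a \twogood{} with an isomorphism gives a \twogood{} with the same image. Hence both $L_1$ and $L_2$ are \tworepr{}.

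The steps, in order, are as follows. First, check that $u\mapsto(u,1_{L_2})$ is a lattice isomorphism of $L_1$ onto $[(0,1);(1,1)]$; this holds because the operations in a direct product are componentwise and the second coordinate is constant. Second, apply Proposition~\ref{prop_filters} with $u=(0_{L_1},1_{L_2})$. Third, repeat symmetrically with $u=(1_{L_1},0_{L_2})$ for $L_2$.

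There is no real obstacle. The only point to state explicitly is that Proposition~\ref{prop_filters} rests on the quotient construction $\Con(\alg/\vartheta)\cong[\vartheta;1_A]_{\Con\alg}$, and a quotient of a $2$-valued unary algebra is again a unary algebra whose operations have at most two values. Operations that become constant on the quotient can be discarded, since constants preserve every equivalence relation. This is already built into the paper's statement of Proposition~\ref{prop_filters}, so we may simply cite it.
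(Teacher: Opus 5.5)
Your proposal is correct and follows the paper's proof: the paper likewise observes that $L_1\times\{1_{L_2}\}$ and $\{1_{L_1}\}\times L_2$ are principal filters isomorphic to $L_1$ and $L_2$, and then applies Proposition~\ref{prop_filters}. Your extra remarks, that \tworepry{} is invariant under isomorphism and that operations of the quotient which become constant may be discarded, are accurate; the paper leaves both implicit.
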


\begin{proof}
	Observe that $L_1\times \{1_{L_2}\}$ and $\{1_{L_1}\}\times L_2$ are principal filters of $L_1\times L_2$ that are isomorphic to $L_1$ and $L_2$, respectively. Therefore, both of them are \tworepr{} by Proposition~\ref{prop_filters}.
\end{proof}

Combining Theorem~\ref{thm_dirdist} and Proposition~\ref{prop_dir}, we obtain the following.

\begin{corollary}
	If $D$ is a finite distributive lattice, then $L\times D$ is \tworepr{} if and only if $L$ is \tworepr{}.
\end{corollary}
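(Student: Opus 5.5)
The statement is an equivalence, and the plan is to get each direction from one earlier result, with no new construction.

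For the ``if'' direction, assume $L$ is \tworepr{}. Then Theorem~\ref{thm_dirdist} applies directly, since $D$ is a finite distributive lattice, and gives that $L\times D$ is \tworepr{}. For the record, that theorem was proved by embedding $D$ as a $(0,1)$-sublattice of $\mathbf{2}^r$. Iterating Lemma~\ref{lem_rho_lifting} $r$ times gives a \twogood{} of $L\times\mathbf{2}^r$, and one then restricts it to $L\times D$. The restriction still satisfies the interval condition of Corollary~\ref{cor_intabst}, because $t^-$ and $t^+$ agree on $D$. None of this needs to be redone here.

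For the ``only if'' direction, assume $L\times D$ is \tworepr{}. Proposition~\ref{prop_dir} then yields that $L$ is \tworepr{}. The underlying reason is that $L\times\{1_D\}$ is the principal filter $[(0_L,1_D);1_{L\times D}]$ of $L\times D$. This filter is isomorphic to $L$, and filters of \tworepr{} lattices are \tworepr{} by Proposition~\ref{prop_filters}.

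There is essentially no obstacle: both directions are immediate citations. The only point to check is the degenerate case $D\cong\mathbf{1}$. Here $L\times D\cong L$, so both implications are trivial, and the earlier results also cover this case, since $r=0$ is allowed in Theorem~\ref{thm_dirdist}.
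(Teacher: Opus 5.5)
Your proposal is correct and follows the paper's own route: the paper also obtains this corollary by combining Theorem~\ref{thm_dirdist} for the ``if'' direction with Proposition~\ref{prop_dir} for the ``only if'' direction. Proposition~\ref{prop_dir} is itself proved, as you describe, through the principal filter $L\times\{1_D\}\cong L$ and Proposition~\ref{prop_filters}.
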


\begin{example}
	Here are some examples of non-distributive lattices whose direct products are \tworepr{}.
	The corresponding $2$-valued unary algebras were found by computer search.
	\begin{itemize}
		\item The lattice $M_3\times N_5$ is \tworepr{} with $\abs{A}=6$.
		\item The lattice $N_5\times N_5$ is \tworepr{} with $\abs{A}=7$.
	\end{itemize}
	\end{example}

\begin{theorem}\label{thm_antenna-product}
If $L_1$ and $L_2$ are \tworepr{} lattices, then so is the lattice $(L_1\times L_2)\oplus \mathbf{1}$.
\end{theorem}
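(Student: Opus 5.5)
The plan is to build an explicit $(0,1)$-embedding of $(L_1\times L_2)\oplus\mathbf{1}$ into a partition lattice and then check condition~\ref{item_interval-condition-phi-b} of Corollary~\ref{cor_intabst}. Let $\varphi_1\colon L_1\hookrightarrow\Eq{A_1}$ and $\varphi_2\colon L_2\hookrightarrow\Eq{A_2}$ be \twogood{}s with $A_1\cap A_2=\emptyset$, and put $A=A_1\cup A_2$. Let $\vartheta=A_1^2\cup A_2^2\in\Eq{A}$. Then $[0_A;\vartheta]_{\Eq{A}}\cong\Eq{A_1}\times\Eq{A_2}$ via $(\rho_1,\rho_2)\mapsto\rho_1\cup\rho_2$ (Fact~\ref{fact_intervals-in-Eq(A)}). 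Write $c=(1_{L_1},1_{L_2})$ for the unique coatom and $1$ for the new top. Define
\[
\varphi(u,v)=\varphi_1(u)\cup\varphi_2(v)\quad\text{for }(u,v)\in L_1\times L_2,\qquad \varphi(1)=1_A .
\]
This is a $(0,1)$-embedding. Indeed, on $L_1\times L_2$ it is a lattice embedding into the sublattice $[0_A;\vartheta]$, and $\varphi(c)=\vartheta<1_A$. Moreover, the join of two elements below $c$ stays below $c$ in both lattices.

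Now fix $t\in\FL{n}$ and $\uv\in\big((L_1\times L_2)\oplus\mathbf{1}\big)^n$, and let $I=\Set{i}{u_i=1}$. Write $t^-=t^-_{\mathcal{M}}$ and $t^+=t^+_{\mathcal{N}}$ with $\mathcal{N}=\Set{\compl{J}}{J\notin\mathcal{M}}$.

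\emph{Key case distinction.} If every $N\in\mathcal{N}$ meets $I$, then $\compl{I}\notin\mathcal{N}$, so $I\in\mathcal{M}$. Hence $t^-(\uv)\geq\bigwedge_{i\in I}u_i=1$ and the interval is trivial. Otherwise some meetand of $t^+(\uv)$ is a join of elements below $c$. Therefore $t^+(\uv)\leq c$, and so $t^-(\uv)\leq c$ as well.

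In this second case I use the map $r\colon x\mapsto x\wedge c$. It is a lattice homomorphism onto $L_1\times L_2$ fixing $L_1\times L_2$. The only point to check is joins: $x\vee y=1$ forces $x=1$ or $y=1$, since $L_1\times L_2$ is closed under joins. Since $r$ is a homomorphism and $t^\mp(\uv)\leq c$, we get $t^\mp(\uv)=t^\mp(r(\uv))$. So we may assume all $u_i=(a_i,b_i)\in L_1\times L_2$. Then $t^\mp(\uv)=(t^\mp(\av),t^\mp(\underline{b}))$ componentwise, and
\[
\big[\varphi(t^-(\uv));\varphi(t^+(\uv))\big]_{\Eq{A}}
=\Set{\sigma_1\cup\sigma_2}{\sigma_j\in[\varphi_j(t^-(\cdot));\varphi_j(t^+(\cdot))]_{\Eq{A_j}}},
\]
because this interval lies below $\vartheta$ and the product decomposition of $[0_A;\vartheta]$ respects intervals. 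By Corollary~\ref{cor_intabst} applied to $\varphi_1$ and $\varphi_2$, each $\sigma_j$ lies in $\varphi_j(L_j)$. Hence every $\sigma$ in the interval is in $\varphi(L_1\times L_2)$, and Corollary~\ref{cor_intabst} shows that $\varphi$ is a \twogood{}.

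The main obstacle is ruling out intervals of the form $[\varphi(t^-(\uv));1_A]$ with $t^-(\uv)<1$: such an interval would contain elements outside the image. This is exactly what the upset argument above handles, via $I\in\mathcal{M}$ whenever $\compl{I}\notin\mathcal{N}$ (Proposition~\ref{prop_MN}). The extra top is needed precisely because without it, joins of $\vartheta$ with atoms across $A_1,A_2$ would escape. In $(L_1\times L_2)\oplus\mathbf{1}$ such joins cannot arise from terms, which is the content of that argument.
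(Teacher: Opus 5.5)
Your proof is correct, but it takes a different route from the paper. The paper does not use Corollary~\ref{cor_intabst} here. It builds the algebra directly on $A=A_1\mathop{\dot\cup}A_2$ with basic operations $F_\tau\cup\hat F_1\cup\hat F_2$, where $\tau$ has blocks $A_1,A_2$ and $\hat f_i$ extends $f_i\in F_i$ by a constant from $f_i(A_i)$ on the other component. Lemma~\ref{lemma_Ftau} confines the congruences to $[0_A;\tau]_{\Eq{A}}\cup\{1_A\}$, and for $\rho\leq\tau$, preserving $\hat f_i$ just means that $\rho\vert_{A_i}$ preserves $f_i$. The resulting congruence lattice is exactly your image of $(L_1\times L_2)\oplus\mathbf{1}$ (taking $\varphi_i(L_i)=\Con{\alg_i}$). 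So the two arguments certify the same concrete representation, from the two sides of the Galois connection.

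The paper's version is shorter, bypasses the DNF/CNF machinery, and exhibits generating operations explicitly. Yours is purely lattice-theoretic, parallels Lemma~\ref{lem_oplusc}, and isolates a reusable fact. In $K\oplus\mathbf{1}$, every interval $[t^-(\uv);t^+(\uv)]$ is either $\{1\}$ (when $I\in\mathcal{M}$) or is realized by a tuple from $K$ (when $\compl{I}\in\mathcal{N}$). Hence take any $0$-preserving lattice embedding $\psi\colon K\hookrightarrow\Eq{A}$ with $\psi(1_K)\neq 1_A$ whose intervals $[\psi(t^-(\uv));\psi(t^+(\uv))]$ stay inside $\psi(K)$. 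It extends, via $1\mapsto 1_A$, to a \twogood{} of $K\oplus\mathbf{1}$. Applied to $K=[0_L;v]_L$ and the restriction of a \twogood{} of $L$, this also yields Theorem~\ref{thm_ideals}. So your case analysis is the relational counterpart of Lemma~\ref{lemma_Ftau}.

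One detail you pass over: $x\mapsto x\wedge c$ sends $1$ to $c$, so neither it nor evaluation inside $L_1\times L_2$ respects empty meets. This is harmless. In your second case $\emptyset\notin\mathcal{M}$ and $\mathcal{N}\neq\emptyset$, so neither $t^-$ nor $t^+$ involves the constant $1$.
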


\begin{proof}
Since $L_1$ and $L_2$ are \tworepr{}, there exist finite algebras $\alg_i=(A_i,F_i)$ with $F_i\subseteq T_2(A_i)$ such that $\Con(\alg_i)\cong L_i$ for $i=1,2$.
Let $A=A_1\mathop{\dot\cup} A_2$ and let $\tau\in\Eq{A}$ be the equivalence relation whose blocks are $A_1$ and $A_2$. For every $f_1\in F_1$ with $f_1(A_1)=\{a_1,b_1\}$, we define $\hat{f}_1\colon A\to A$ by 
\[
\hat{f}_1(x)=\begin{cases}
  f_1(x) &\text{ if } x \in A_1,\\
  a_1 &\text{ if } x\in A_2.
\end{cases}
\]
Similarly, for every $f_2\in F_2$, we define $\hat{f}_2$ by interchanging the roles of $1$ and $2$.

Consider the algebra $\alg=(A;F_\tau\cup \hat{F}_1 \cup \hat{F}_2)$, 
where $F_\tau$ is defined in Definition~\ref{def_Ftau} and $\hat{F}_i = \big\{\hat{f}_i : f_i \in F_i\big\}$ for $i=1,2$.
We claim that
\begin{equation}\label{equ_antenna}
\rho \in \Con(\alg) \iff 
\rho = 1_A 
\text{ or }
\bigl(\rho \leq \tau \text{ and } \rho\vert_{A_i}\in\Con(\alg_i)
 \text{ for } i=1,2\bigr).
\end{equation}
Indeed, if $\rho\in\Con(\alg)$, then $\rho \lhd F_\tau$, hence by Lemma~\ref{lemma_Ftau} we have $\rho=1_A$ or $\rho\leq\tau$.
In the latter case, $\rho \lhd \hat{F}_i$ if and only if $\rho\vert_{A_i}\lhd F_i$, and this proves \eqref{equ_antenna}.

Clearly, it follows from \eqref{equ_antenna} that $\Con{\alg} \cong (\Con{\alg_1} \times \Con{\alg_2}) \oplus \mathbf{1} \cong (L_1 \times L_2) \oplus \mathbf{1}$.
All basic operations of $\alg$ are $2$-valued, thus the proof is complete.  
\end{proof}

\section{Concluding remarks and open problems}\label{sect_conclusion}

We provided a characterization of congruence lattices of $2$-valued unary algebras and we showed that the class of lattices that are isomorphic to such congruence lattices is closed under certain constructions. 
We gave several examples and non-examples, and in the appendix we present results of two systematic computer searches: we list all congruence lattices of $2$-valued unary algebras of sizes up to four (see Appendix~\ref{sect_four-element-sets}) and we classify lattices with at most seven elements with respect to \tworepry{} (see Appendix~\ref{sect_seven-element-lattices}).

Of course, many questions remain open and deserve further study.
The first of these suggests itself immediately.

\begin{problem}
	Generalize the results of this paper to $k$-valued unary algebras for $k=3,4,\dots$, or even to arbitrary finite algebras.
\end{problem}

The class of \tworepr{} lattices is closed under taking filters, but it is not closed under taking ideals. The counterexample $M_3^2 \oplus \bf{1}$ given in Remark~\ref{rem_M_3^2+1} is quite large; perhaps smaller ones do exist.

\begin{problem}
	Find the smallest example of a \tworepr{} lattice having an ideal that is not \tworepr{}.
\end{problem}

Theorem~\ref{thm_dirdist} raises the following problem.

\begin{problem}
	Is there a nondistributive finite lattice $N$ such that for every \tworepr{} finite lattice $L$, the direct product $L \times N$ is also \tworepr{}? 
\end{problem}

For a given finite lattice $L$, we may ask not only about the existence of a finite $2$-valued unary algebra with a congruence lattice isomorphic to $L$, but also about the possible cardinalities of such algebras: let 
\[
	\spec_2(L) = \big\{ \abs{A}: \Con{(A;F)} \cong L \text{ for some } F \subseteq T_2(A) \big\}.
\]

\begin{problem}
	Determine $\spec_2(L)$ for every \tworepr{} lattice $L$.  
\end{problem}

We present two partial results towards the solution of the above problem.
 
\begin{proposition}\label{prop_spec2(Eq)}
	If $m \geq 3$, then $\spec_2\big(\Eq{[m]}\big) = \{m\}$.
\end{proposition}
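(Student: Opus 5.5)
Since $\Eq{[m]}=\Con{([m];\emptyset)}$, we have $m\in\spec_2(\Eq{[m]})$, so the task is to show that no other cardinality is possible. The plan is to combine Proposition~\ref{prop_median} with the fact that a partition lattice determines the size of its underlying set.

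Suppose $\Con{(A;F)}\cong\Eq{[m]}$ with $F\subseteq T_2(A)$ and $m\geq 3$. The proof of Proposition~\ref{prop_Eq(A1)xEq(A2)} shows that $\Eq{[m]}$ contains a $(0,1)$-sublattice isomorphic to $M_3$. Transporting it along the isomorphism, $\Con{(A;F)}$ also contains a $(0,1)$-sublattice isomorphic to $M_3$. Here "$(0,1)$" refers to the bounds of $\Con{(A;F)}$, namely $0_A$ and $1_A$, which are the bounds of $\Eq{A}$. Proposition~\ref{prop_median} then gives $\Con{(A;F)}=\Eq{A}$, and hence $\Eq{A}\cong\Eq{[m]}$.

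It remains to show that $\Eq{A}\cong\Eq{[m]}$ forces $\abs{A}=m$. The cleanest invariant is the length, i.e., the size of a longest chain. A maximal chain in $\Eq{A}$ decreases the number of blocks by exactly one at each covering step (Fact~\ref{fact_intervals-in-Eq(A)}), so every maximal chain has length $\abs{A}-1$. Lattice isomorphisms preserve this length, so $\abs{A}-1=m-1$. An alternative invariant is the number of atoms, $\binom{\abs{A}}{2}$, which is injective on positive integers. Either argument closes the proof.

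The only step needing any care is checking that Proposition~\ref{prop_median} applies, namely that the copy of $M_3$ is a $(0,1)$-sublattice of the congruence lattice of an algebra on $A$ itself. This holds because isomorphisms preserve bounds. One should also note where the hypothesis $m\geq 3$ is used: $M_3$ embeds as a $(0,1)$-sublattice of $\Eq{[m]}$ only when $m\geq 3$. For $m=2$ the lattice $\mathbf{2}$ is distributive and has larger spectrum, so the statement genuinely needs this hypothesis.
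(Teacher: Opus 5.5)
Your proof is correct and follows the paper's argument. The $(0,1)$-copy of $M_3$ from the proof of Proposition~\ref{prop_Eq(A1)xEq(A2)}, combined with Proposition~\ref{prop_median}, forces $\Con{\alg}=\Eq{A}$ and hence $\abs{A}=m$; you also spell out two points the paper leaves implicit, namely $m\in\spec_2(\Eq{[m]})$ via $\Con{([m];\emptyset)}$ and why $\Eq{A}\cong\Eq{[m]}$ forces $\abs{A}=m$.
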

\begin{proof}
	We can apply Proposition~\ref{prop_median} to the lattice $\Eq{[m]}$, as we have seen in the proof of Proposition~\ref{prop_Eq(A1)xEq(A2)} that it contains a $(0,1)$-sublattice isomorphic to $M_3$.
	Thus if $\Con{\alg} \cong \Eq{[m]}$ for some algebra $\alg=(A;F)$ with $F \subseteq T_2(A)$, then $\Con{\alg} = \Eq{A}$, hence $\abs{A}=m$. 
\end{proof}

\begin{proposition}\label{prop_spec2(D)}
	For every nontrivial finite distributive lattice $D$, we have
	\[
		\spec_2(D) = \big\{h(D)+1,h(D)+2,\dots\big\},
	\]
	where $h(D)$ denotes the height of the lattice $D$.
\end{proposition}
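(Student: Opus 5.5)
We have to prove two inclusions: a lower bound $\abs{A}\geq h(D)+1$ for every $2$-valued unary algebra with $\Con{\alg}\cong D$, and realizability for every $m\geq h(D)+1$.

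\emph{Lower bound.} This part is general and does not use $2$-valuedness. If $\Con{\alg}\cong D$, then a maximal chain $0=d_0<d_1<\dots<d_h=1$ of $D$ with $h=h(D)$ is carried to a strictly increasing chain $0_A=\rho_0<\rho_1<\dots<\rho_h=1_A$ in $\Eq{A}$. Each strict step decreases the number of blocks by at least one. There are $\abs{A}$ blocks at $0_A$ and one block at $1_A$, so $h\leq \abs{A}-1$.

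\emph{Upper set.} By Corollary~\ref{cor_dist} (Quackenbush--Wolk), every distributive $(0,1)$-sublattice of $\Eq{A}$ is the congruence lattice of a $2$-valued unary algebra. It therefore suffices to find, for each $m\geq h+1$, a $(0,1)$-embedding $D\hookrightarrow\Eq{[m]}$.

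\begin{itemize}
\item First we handle $m=h+1$. By Birkhoff's theorem, $D$ is the lattice of downsets of its poset $J$ of join-irreducibles, and $\abs{J}=h$. Fix a linear extension $j_1,\dots,j_h$ of $J$ and set $A=\{0,1,\dots,h\}$. For a downset $X\subseteq J$, let $\phi(X)$ be the equivalence relation whose only non-singleton block is $\{0\}\cup\{i: j_i\in X\}$. Meets and joins of such relations correspond to intersections and unions of the index sets, since all the blocks contain $0$. Hence $\phi$ is a lattice embedding, and it sends $\emptyset\mapsto 0_A$ and $J\mapsto 1_A$. (This construction works for any subset lattice, so the linear extension is not even needed.)
\item Next we pass to larger sizes. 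Given a $(0,1)$-embedding $\psi\colon D\hookrightarrow\Eq{[m]}$, compose it with the map $\rho\mapsto\rho_1$ from Lemma~\ref{lem_rho_lifting}, which appends $m+1$ to the block of $m$. By \eqref{F:rho} this map $\iota_1$ is a lattice embedding, and it sends $0_{[m]}$ to the atom $\{m,m+1\}^2\cup 0$, not to $0_{[m+1]}$. To repair the bottom, redefine the image of $0_D$ as $0_{[m+1]}$ and keep $\iota_1\psi$ on $D\setminus\{0_D\}$.
\end{itemize}

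The repair in the second step is where care is needed. Meets of two nonzero elements of $D$ could be $0_D$ while their images meet in the atom rather than in $0$, which breaks the meet homomorphism. For example, this happens for $D=\mathbf 2^2$.

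The fix is to enlarge the embedding directly instead. Take $A=\{0,1,\dots,h\}\cup\{h+1,\dots,m-1\}$ and use the same construction as in the case $m=h+1$, except that the distinguished block of $\phi(J)=1_A$ must be all of $A$. So set $\phi(X)$ to have non-singleton block $\{0\}\cup\{i:j_i\in X\}$ when $X\neq J$, and $\phi(J)=1_A$. This still preserves meets, and it preserves joins as long as $J$ is join-irreducible in $D$ or the join lands on $J$ only when it is the union. Joins of proper downsets can equal $J$, and then they map to the block on $\{0,\dots,h\}$, not to $1_A$. Hence the extra points must be absorbed more cleverly. Choose a single element $j_h$ that is maximal in $J$, and attach all the extra points $h+1,\dots,m-1$ to the index of $j_h$: the block of $\phi(X)$ becomes $\{0\}\cup\{i\leq h: j_i\in X\}$, together with $\{h+1,\dots,m-1\}$ whenever $j_h\in X$. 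Meets and joins still correspond to intersections and unions, because membership of $j_h$ behaves like any other coordinate. We get $\phi(\emptyset)=0_A$ and $\phi(J)=1_A$, so $\phi$ is a $(0,1)$-embedding into $\Eq{[m]}$ for every $m\geq h+1$.

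Corollary~\ref{cor_dist} then gives $m\in\spec_2(D)$. The main point to verify is this block-union bookkeeping, which is routine.
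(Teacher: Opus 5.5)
Your proof is correct. The lower bound and the case $m=h(D)+1$ are essentially the paper's: you count blocks along a longest chain, and you use the ``star'' relations whose only non-singleton block is $\{0\}\cup S$ for a downset $S$ of $J(D)$.

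The difference is in how you reach every $m>h(D)+1$. The paper argues inductively. Starting from an arbitrary $(0,1)$-embedding $\varphi\colon D\hookrightarrow\Eq{[m]}$, it splits $D$ into a prime ideal $I$ and the complementary prime filter $F$, and puts $\psi(x)=\varphi(x)_0$ for $x\in I$ and $\psi(x)=\varphi(x)_1$ for $x\in F$. This is exactly the working version of your abandoned first attempt. Instead of switching to $\iota_0$ only at $0_D$, it switches on a whole prime ideal, which is downward closed and closed under meets. By \eqref{F:rho-a} and \eqref{F:rho-b}, the $\mathbf 2^2$ obstruction you found then disappears.

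You instead give, for each $m$, the explicit composite $D\hookrightarrow\PSet{J}\hookrightarrow\PSet{[m-1]}\hookrightarrow\Eq{\{0,\dots,m-1\}}$. The middle map copies the coordinate of $j_h$ onto the extra points, and the last map is the star embedding. Each factor is a $(0,1)$-lattice embedding, so nothing delicate remains to check.

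Your route is more elementary: it needs only Birkhoff's theorem, not the prime ideal theorem, and it is fully explicit. The paper's route proves a stronger structural fact: any given $(0,1)$-embedding of $D$ into $\Eq{[m]}$, not just the star-shaped one, can be modified into one into $\Eq{[m+1]}$.

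For a clean write-up, delete the two discarded attempts. Also note that neither the linear extension nor the maximality of $j_h$ is used. You only need $J\neq\emptyset$ so that $j_h$ exists, and that is where the nontriviality of $D$ enters.
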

\begin{proof}
	First let us recall that for distributive lattices, \twogood{}s coincide with $(0,1)$-embeddings, as condition \ref{item_interval-conditionc} of Corollary~\ref{cor_intconc} holds automatically.
	Recall also that by Corollary~\ref{cor_dist}, $\spec_2(D)$ is not empty (but that corollary doesn't give any information about the elements of this set).
	It is clear that the least element of $\spec_2(D)$ must be at least $h(D)+1$, since the heigth of $\Eq{[m]}$ is $m-1$.
	
	Let $J(D)$ denote the set of join-irreducible elements of $D$. According to Birkhoff's representation theorem (see \cite[Chapter~II, \S1.2, Theorem~107]{Gratzer11}), $D$ is isomorphic to the lattice of downsets of the poset $J(D)$, thus the height of $D$ is $\abs{J(D)}$ (see \cite[Chapter~II, \S1.2, Corollary~112]{Gratzer11}).
	
	In order to prove that $h(D)+1 \in \spec_2(D)$, let us append an external least element $\bot$ to the poset $J(D)$.
	For an arbitrary element $u \in D$, let $\varphi(u)$ denote the equivalence relation on $J(D) \cup \{\bot\}$ whose only potentially non-singleton block is $S \cup \{\bot\}$, where $S$ denotes the downset of $J(D)$ corresponding to $u$.
	Then $\varphi\colon D \to \Eq{J(D) \cup \{\bot\}}$ is a $(0,1)$-embedding, hence $\abs{J(D)}+1 \in \spec_2(D)$ indeed.
	
	Now it suffices to prove that $m \in \spec_2(D)$ implies $m+1 \in \spec_2(D)$ for all natural numbers $m$.
	Assume that $\varphi\colon D\hookrightarrow \Eq{[m]}$ is a \twogood{}. 
	By the prime ideal theorem (see \cite[Chapter~II, \S1.4, Theorem~115]{Gratzer11}), $D$ can be decomposed into the union of a prime ideal $I$ and a prime filter $F$ that are disjoint from each other.
	Consider the embeddings $\iota_0$ and $\iota_1$ used in the proof of Lemma~\ref{lem_rho_lifting}.
	Using \eqref{F:rho-a} and \eqref{F:rho-b} it is straightforward to verify that the following map $\psi\colon D\hookrightarrow \Eq{[m+1]}$ is a $(0,1)$-embedding:
	\[
		\psi(x) = 
		\begin{cases}
			\varphi(x)_0 & \text{ if } x \in I,\\
			\varphi(x)_1 & \text{ if } x \in F.
		\end{cases}
	\]
	Since $D$ is distributive, every $(0,1)$-embedding of $D$ into a partition lattice is a \twogood{}; hence $m+1\in\operatorname{Sp}_2(D)$.
\end{proof}

Proposition~\ref{prop_spec2(Eq)} implies that there is no \twogood{} $\Eq{[m]} \to \Eq{[m']}$ whenever $3 \leq m < m'$.
However, this does not rule out the existence of a $(0,1)$-embedding $\Eq{[m]} \to \Eq{[m']}$, and this leads us to the following purely lattice-theoretic problem.

\begin{problem}\label{problem_(0,1)}
	Determine all pairs of natural numbers $m < m'$ for which a $(0,1)$-embedding $\Eq{[m]} \to \Eq{[m']}$ exists.  
\end{problem}
Czédli \cite{Cz} has provided results towards the solution of Problem~\ref{problem_(0,1)}.
Among others, he proved that there is no $(0,1)$-embedding of $\Eq{[m]}$ into either $\Eq{[m+1]}$ or $\Eq{[m+2]}$ for $m \geq 4$ and that there is no $(0,1)$-embedding of $\Eq{[3k+1]}$ into $\Eq{[3k+5]}$ for any positive integer $k$.
In particular, there is no $(0,1)$-embedding of $\Eq{[4]}$ into $\Eq{[8]}$, although $(0,1)$-embeddings of $\Eq{[4]}$ into $\Eq{[7]}$ and $\Eq{[9]}$ do exist \cite{Cz}.
 
Since for distributive lattices \twogood{}s coincide with $(0,1)$-embeddings, Proposition~\ref{prop_spec2(D)} tells us that if $D \leq \Eq{[m]}$ is small enough so that it doesn't contain any sublattice isomorphic to $M_3$ or $N_5$, then $D$ embeds into $\Eq{[m+1]}$ in a $(0,1)$-preserving way.
On the other hand, according to the above mentioned results of Czédli \cite{Cz}, $\Eq{[m]}$ is already too large to admit a $(0,1)$-embedding into $\Eq{[m+1]}$.
This calls for a characterization of sublattices of $\Eq{[m]}$ that are ``small enough'' in the above sense.

\begin{problem}
	Determine sublattices $L \leq \Eq{[m]}$ that are maximal with respect to the property of admitting a $(0,1)$-embedding $L \to \Eq{[m+1]}$.  
\end{problem}

Finally, we pose a semigroup-theoretic problem related to the ``other side'' of the Galois connection between equivalence relations and $2$-valued unary operations.

\begin{problem}
	Determine Galois-closed subsets $M \subseteq T_2(A)$, i.e., subsets satisfying $M = M^{*\circledast}$.
\end{problem}

\clearpage

\appendix

\section{Congruence lattices of $2$-valued unary algebras with at most four elements}\label{sect_four-element-sets}
Let $\alg=(A;F)$ be a unary algebra with $F\subseteq T_2(A)$ and $1\leq\abs{A}\leq4$. 
Then $\Con{\alg}$ is isomorphic to one of the lattices in Figure~\ref{Fig:ConLat_on_4-1}.

\bigskip

\bigskip

\begin{figure}[ht]
\centering
\scalebox{0.96}{
\begin{tikzpicture}[x=0.9mm,y=0.9mm, line width=0.35pt,
  lab/.style={draw=none, fill=none, below=3mm, font=\small}]

\newcommand{\vertex}[2]{\filldraw[black] (#1,#2) circle (1.2mm);}
\newcommand{\vertexL}[5]{\filldraw[black] (#1,#2) circle (1.2mm); \node at (#4,#5) {$#3$};}
\newcommand{\lat}[4]{%
  \begin{scope}[shift={(#1,#2)}]
    #4
    \node[lab] at (0,-3) {$#3$};
  \end{scope}
}


\lat{0}{0}{K_{1,1}}{%
    \vertex{0}{0};}

\lat{10}{0}{K_{2,1}}{%
    \draw (0,0)--(0,10);
    \vertex{0}{0}\vertex{0}{10}}

\lat{20}{0}{K_{3,1}}{%
    \draw (0,0)--(0,10)--(0,20);
    \vertex{0}{0}\vertex{0}{10}\vertex{0}{20}}

\lat{35}{0}{K_{4,1}}{
    \draw (0,0)--(-10,10)--(0,20)--(10,10)--(0,0);
    \vertex{0}{0}\vertex{-10}{10}\vertex{0}{20}\vertex{10}{10}}

\lat{50}{0}{K_{4,2}}{%
    \draw (0,0)--(0,30);
    \vertex{0}{0}\vertex{0}{10}\vertex{0}{20}\vertex{0}{30}}

\lat{65}{0}{L_{5,1}}{%
    \draw (0,0)--(0,10)--(-10,20)--(0,30)--(10,20)--(0,10);
    \vertex{0}{0}\vertex{0}{10}\vertex{-10}{20}\vertex{0}{30}\vertex{10}{20}}

\lat{80}{0}{L_{5,1}^d}{%
    \draw (0,0)--(-10,10)--(0,20)--(0,30); 
    \draw (0,0)--( 10,10)--(0,20);
    \vertex{0}{0}\vertex{-10}{10}\vertex{10}{10}\vertex{0}{20}\vertex{0}{30}}

\lat{105}{0}{K_{5,1}\cong M_3}{%
    \draw (0,0)--(-10,10)--(0,20); 
    \draw (0,0)--(  0,10)--(0,20);
    \draw (0,0)--( 10,10)--(0,20); 
    \vertex{0}{0}\vertex{-10}{10}\vertex{0}{10}\vertex{10}{10}\vertex{0}{20}}


\lat{0}{-55}{K_{5,2}\cong N_5}{%
    \draw (0,0)--(-10,10)--(-10,20)--(0,30)--(10,15)--(0,0);
    \vertex{0}{0}\vertex{-10}{10}\vertex{-10}{20}\vertex{0}{30}\vertex{10}{15}}

\lat{25}{-55}{L_{6,1}^d}{%
    \draw (0,0)--(-10,15)--(0,30);
    \draw (0,0)--(0,10)--(0,20)--(0,30);
    \draw (0,0)--(10,10)--(0,20);
    \vertex{0}{0}\vertex{0}{10}\vertex{10}{10}\vertex{-10}{15}\vertex{0}{20}\vertex{0}{30}}

\lat{40}{-55}{L_{6,2}}{%
    \draw (0,0)--(0,10)--(-10,20)--(0,30); 
    \draw (0,10)--(0,20)--(0,30);
    \draw (0,10)--(10,20)--(0,30);
    \vertex{0}{0}\vertex{0}{10}\vertex{-10}{20}\vertex{0}{20}\vertex{10}{20}\vertex{0}{30}}

\lat{55}{-55}{L_{6,2}^d}{%
    \draw (0,0)--(-10,10)--(0,20)--(0,30); 
    \draw (0,0)--(0,10)--(0,20);
    \draw (0,0)--(10,10)--(0,20);
    \vertex{0}{0}\vertex{-10}{10}\vertex{0}{10}\vertex{10}{10}\vertex{0}{20}\vertex{0}{30}}

\lat{80}{-55}{K_{6,4}}{%
    \draw (0,0)--(-10,10)--(-10,20)--(0,30)--(10,20)--(10,10)--(0,0); 
    \draw (-10,10)--(10,20);
    \vertex{0}{0}\vertex{-10}{10}\vertex{10}{10}\vertex{-10}{20}\vertex{10}{20}\vertex{0}{30}}

\lat{105}{-55}{L_{7,4}}{%
    \draw (0,0)--(-10,10)--(-10,20)--(0,30)--(10,20)--(0,10)--(0,0); 
    \draw (0,10)--(-10,20);
    \draw (0,10)--(0,30);
    \vertex{0}{0}\vertex{-10}{10}\vertex{0}{10}\vertex{-10}{20}\vertex{0}{20}\vertex{10}{20}\vertex{0}{30}}


\lat{0}{-110}{L_{7,4}^d}{%
    \draw (0,0)--(-10,10)--(-10,20)--(0,30)--(0,20)--(10,10)--(0,0);
    \draw (0,0)--(0,20);
    \draw (-10,10)--(0,20);
    \vertex{0}{0}\vertex{-10}{10}\vertex{0}{10}\vertex{10}{10}\vertex{0}{20}\vertex{-10}{20}\vertex{0}{30}}

\lat{25}{-110}{L_{7,5}}{%
    \draw (0,0)--(-10,10)--(-10,20)--(0,30)--(10,20)--(10,10)--(0,0); 
    \draw (-10,10)--(0,20)--(10,10);
    \draw (0,20)--(0,30);
    \vertex{0}{0}\vertex{-10}{10}\vertex{10}{10}\vertex{-10}{20}\vertex{0}{20}\vertex{10}{20}\vertex{0}{30}}

\lat{50}{-110}{K_{8,1}}{%
    \draw (0,0)--(-10,10)--(-10,20)--(0,30)--(10,20)--(10,10)--(0,0); 
    \draw (-10,10)--(0,20)--(0,30);
    \draw (0,0)--(0,10)--(10,20);
    \draw (-10,10)--(10,20);
    \vertex{0}{0}\vertex{0}{10}\vertex{-10}{10}\vertex{10}{10}\vertex{-10}{20}\vertex{0}{20}\vertex{10}{20}\vertex{0}{30}}

\lat{75}{-110}{K_{8,2}}{%
    \draw (0,0)--(-10,10)--(-10,20)--(0,30)--(10,20)--(10,10)--(0,0); 
    \draw (-10,10)--(0,20)--(10,10);
    \draw (0,20)--(0,30);
    \draw (0,0)--(0,10);
    \draw (-10,20)--(0,10)--(10,20);
    \vertex{0}{0}\vertex{0}{10}\vertex{-10}{10}\vertex{10}{10}\vertex{-10}{20}\vertex{0}{20}\vertex{10}{20}\vertex{0}{30}}

\lat{100}{-110}{L_{9,1}}{%
    \draw (0,0)--(-10,10)--(-10,20)--(0,30)--(10,20)--(10,10)--(0,0); 
    \draw (-10,10)--(0,20)--(10,10);
    \draw (0,20)--(0,30);
    \draw (0,0)--(0,10);
    \draw (-10,20)--(0,10)--(10,20);
    \draw (10,10)--(0,30);
    \vertex{0}{0}\vertex{0}{10}\vertex{-10}{10}\vertex{10}{10}\vertex{-10}{20}\vertex{0}{20}\vertex{5}{20}\vertex{10}{20}\vertex{0}{30}}


\lat{10}{-165}{K_{10,1}}{%
    \draw (0,0)--(-5,10)--(-15,20)--(0,30)--(15,20)--(5,10)--(0,0);
    \draw (0,0)--(-15,10)--(-5,20)--(0,30)--(5,20)--(15,10)--(0,0);
    \draw (-15,10)--(-15,20);
    \draw (15,10)--(15,20);
    \draw (-15,20)--(5,10);
    \draw (-5,20)--(15,10);
    \draw (-5,10)--(5,20);
    \vertex{0}{0}
    \vertex{-15}{10}\vertex{-5}{10}\vertex{5}{10}\vertex{15}{10}
    \vertex{-15}{20}\vertex{-5}{20}\vertex{5}{20}\vertex{15}{20}
    \vertex{0}{30}}

\lat{70}{-165}{L_{15,1}}{%
    \draw (0,30)--(-30,20); \draw (0,30)--(-20,20); \draw (0,30)--(-10,20); \draw (0,30)--(0,20); \draw (0,30)--(10,20); \draw (0,30)--(20,20); \draw (0,30)--(30,20);
    \draw (-30,20)--(-25,10); \draw (-30,20)--(-15,10); \draw (-30,20)--(-5,10);
    \draw (-20,20)--(-25,10); \draw (-20,20)--(25,10);
    \draw (-10,20)--(-25,10); \draw (-10,20)--(5,10); \draw (-10,20)--(15,10);
    \draw (0,20)--(-15,10); \draw (0,20)--(15,10);
    \draw (10,20)--(-15,10); \draw (10,20)--(5,10); \draw (10,20)--(25,10);
    \draw (20,20)--(-5,10); \draw (20,20)--(5,10);
    \draw (30,20)--(-5,10); \draw (30,20)--(15,10); \draw (30,20)--(25,10);
    \draw (0,0)--(-25,10); \draw (0,0)--(-15,10); \draw (0,0)--(-5,10); \draw (0,0)--(5,10); \draw (0,0)--(15,10); \draw (0,0)--(25,10);
    \vertex{0}{30}
    \vertex{-30}{20}\vertex{-20}{20}\vertex{-10}{20}\vertex{0}{20}\vertex{10}{20}\vertex{20}{20}\vertex{30}{20}
    \vertex{-25}{10}\vertex{-15}{10}\vertex{-5}{10}\vertex{5}{10}\vertex{15}{10}\vertex{25}{10}
    \vertex{0}{0}}

\end{tikzpicture}
}
\caption{Congruence lattices of $2$-valued unary algebras with at most four elements.}
\label{Fig:ConLat_on_4-1}
\end{figure}

\clearpage

\section{Classification of lattices with at most seven elements}\label{sect_seven-element-lattices}

\begin{notation*}\ 
\begin{itemize}
\item We use the notation $(abc\dots)\dots(xyz\dots)$ for the equivalence relation whose non-singleton blocks are $\{a,b,c,\dots\},\ldots,\{x,y,z,\dots\}$; singleton blocks are omitted. 
For example, $(12)(34)\in\Eq{[5]}$ denotes the equivalence relation with blocks $\{1,2\}$, $\{3,4\}$, and $\{5\}$.
\item The $k$-element chain (equivalently, the chain of length $k-1$) is denoted by $C_k$. Note that $C_1 = \mathbf{1}$ and $C_2 = \mathbf{2}$.
\item Each lattice is assigned a label of the form $m/n$, where $m$ is the number of elements of the lattice and $n$ is its index in the enumeration used below. 
\item The superscripts ${}^{(\mathrm{RD})}$, ${}^{(\mathrm{R})}$, and ${}^{(\mathrm{NR})}$ indicate, respectively, that the lattice is distributive and \tworepr{}, nondistributive and \tworepr{}, or not \tworepr{}.
\item For $a_1,\ldots,a_n\in[n]$, we denote by $\tau_{a_1a_2\cdots a_n}$ the transformation of $[n]$ 
defined by
\[
    i\mapsto a_i\qquad(i\in[n]).
\]
\end{itemize}
\end{notation*}

In the following, we examine lattices with at most $7$ elements regarding their \tworepry{}. 
(Let us mention that all but one lattices of size up to $7$ are known to be representable as congruence lattices of finite algebras \cite{DeMeo}; the only exception is the lattice $K_{7,4}$ on Figure~\ref{Fig:Lat7_part5}.)
For \tworepr{} lattices $L$, we identify the smallest base set $[m]$ ($m \in \mathbb{N}$) and a minimal set of transformations $M\subseteq T_2([m])$ such that $L\cong\Con{([m];M)}$ holds. 
The calculations were performed using the Mathematica software package. 
The statements below are used in cases of non-\tworepry{}.

As a consequence of Fact~\ref{fact_intervals-in-Eq(A)}, we can state the following.
\begin{proposition}\label{Prop:Forbidden_sublattices}
    Let $A$ be a finite set. 
    \begin{enumerate}[label=\rm(\alph*)]
        \item\label{item:chains-in-Prop:Forbidden_sublattices} No interval of $\Eq{A}$ is isomorphic to $C_k$ for any $k\geq 3$.
        \item\label{item:C2xC2-in-Prop:Forbidden_sublattices} There is no $u\in\Eq{A}$ such that the interval $[u;1_A]$ is isomorphic to $C_2\times C_2$.
        \item\label{item:Mk-in-Prop:Forbidden_sublattices} No interval of $\Eq{A}$ is isomorphic to $M_k$ for any $k\geq 4$.
    \end{enumerate}
\end{proposition}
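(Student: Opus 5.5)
By Fact~\ref{fact_intervals-in-Eq(A)}, every interval $[\rho;\sigma]$ of $\Eq{A}$ is isomorphic to a product $\Eq{[m_1]}\times\dots\times\Eq{[m_t]}$. Factors with $m_i=1$ are trivial and can be discarded, so I will assume $m_i\geq 2$ for all $i$. The plan is to compare such a product with the forbidden lattices through invariants that are easy to read off: the number of elements, direct indecomposability, and the number of atoms.

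For \ref{item:chains-in-Prop:Forbidden_sublattices}, suppose the interval is $C_k$ with $k\geq 3$. A chain has exactly one atom. In the product, however, each factor $\Eq{[m_i]}$ contributes $\binom{m_i}{2}\geq 1$ atoms, so the total number of atoms is $\sum_i\binom{m_i}{2}$. For this sum to equal $1$, there must be exactly one factor and it must have $m_1=2$. In that case the interval is $\Eq{[2]}\cong C_2$, which contradicts $k\geq 3$.

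For \ref{item:Mk-in-Prop:Forbidden_sublattices}, suppose the interval is $M_k$ with $k\geq 4$. Since $M_k$ is not distributive, it is not a chain and not a product of two-element chains; more simply, $M_k$ has height $2$ and is directly indecomposable. A product of at least two nontrivial factors of height $2$ would have height at least $2\cdot 1+\dots$, and height exactly $2$ forces two factors each equal to $\Eq{[2]}$, giving $C_2\times C_2\cong M_2\neq M_k$. So there is a single factor, which has height $m_1-1=2$, hence $m_1=3$. But then the interval is $\Eq{[3]}\cong M_3$, which has $3$ atoms rather than $k\geq 4$.

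For \ref{item:C2xC2-in-Prop:Forbidden_sublattices}, the interval is now a principal filter $[u;1_A]$, so $t=1$: the top block is all of $A$. Hence $[u;1_A]\cong\Eq{[m]}$, where $m$ is the number of blocks of $u$. The lattice $\Eq{[m]}$ has $B_m$ elements, and $B_m\neq 4$ for every $m$, since the Bell numbers run $1,1,2,5,15,\dots$. So $[u;1_A]$ cannot be isomorphic to $C_2\times C_2$. Alternatively, $\Eq{[m]}$ is directly indecomposable.

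None of these steps is a real obstacle. The only point that needs care is in \ref{item:C2xC2-in-Prop:Forbidden_sublattices}, where one must note that the top of the filter is $1_A$, which has a single block, so that $t=1$. This is exactly why the statement is restricted to filters: in general intervals the value $t=2$ is possible and $C_2\times C_2$ does occur.
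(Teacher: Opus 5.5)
Your proof is correct and follows the same route as the paper. The paper states this proposition simply as a consequence of Fact~\ref{fact_intervals-in-Eq(A)}: intervals of $\Eq{A}$ are products $\Eq{[m_1]}\times\dots\times\Eq{[m_t]}$, with $t=1$ for principal filters. It gives no further detail, and your counting of atoms, heights and cardinalities is exactly the routine verification it leaves implicit. One small point: your sentence about products of ``nontrivial factors of height $2$'' in part (c) is garbled. The intended argument is still sound: each nontrivial factor contributes height at least $1$, so total height $2$ leaves only $\Eq{[3]}\cong M_3$ or $\Eq{[2]}^2\cong C_2\times C_2$.
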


\begin{corollary}\label{Cor:Chains}
    Let $L$ be a finite lattice. 
    If there exist distinct elements $u,v,w\in L\setminus\{0_L,1_L\}$ such that $[v,w]$ is a chain of length greater than $1$ and $u$ is a common relative complement of $v$ and $w$, then $L$ is not \tworepr{}.   
\end{corollary}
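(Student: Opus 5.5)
We generalize Example~\ref{example_L73d}: we find a term $t$ and a tuple $\uv$ with $t^-(\uv)=v$ and $t^+(\uv)=w$, so that $[t^-(\uv);t^+(\uv)]_L=[v;w]$ is a chain of length greater than $1$. By Proposition~\ref{Prop:Forbidden_sublattices}\ref{item:chains-in-Prop:Forbidden_sublattices}, such a chain is not isomorphic to any interval of a partition lattice. Corollary~\ref{cor_necessary} then shows that $L$ is not \tworepr{}.

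Since $[v,w]$ is a chain of length greater than $1$, we have $v<w$. The hypothesis that $u$ is a common relative complement of $v$ and $w$ means $u\wedge v=u\wedge w=0_L$ and $u\vee v=u\vee w=1_L$. We use the three variables $x,y,z$ and take $t(x,y,z)=x\vee(y\wedge z)$, evaluated at $(x,y,z)=(v,w,u)$.

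For the DNF, the upset is $\mathcal{M}=\{\{1\},\{2,3\}\}^{\uparrow}$, so $t^-(x,y,z)=x\vee(y\wedge z)$. Evaluating gives
\[
t^-(v,w,u)=v\vee(w\wedge u)=v\vee 0_L=v.
\]
By Proposition~\ref{prop_MN}, the complements of the non-members of $\mathcal{M}$ are exactly $\{1,2\}$, $\{1,3\}$ and their supersets. Hence
\[
t^+(x,y,z)=(x\vee y)\wedge(x\vee z).
\]
Evaluating gives
\[
t^+(v,w,u)=(v\vee w)\wedge(v\vee u)=w\wedge 1_L=w.
\]
Thus $[t^-(\uv);t^+(\uv)]_L=[v;w]$, which is a chain with at least three elements.

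The remaining step is to apply Corollary~\ref{cor_necessary}. It requires $[v;w]$ to be isomorphic to an interval of some $\Eq{A}$, and Proposition~\ref{Prop:Forbidden_sublattices}\ref{item:chains-in-Prop:Forbidden_sublattices} rules this out. No step is a real obstacle. The only care needed is to confirm that the DNF/CNF pair is correctly computed via Proposition~\ref{prop_MN}, and to note that distinctness and the exclusion of $0_L,1_L$ are not used beyond $v<w$.
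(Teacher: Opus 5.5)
Your proof is correct and follows the paper's argument: you realize $[v;w]$ as an interval $[t^-(\uv);t^+(\uv)]$ and then combine Proposition~\ref{Prop:Forbidden_sublattices}\ref{item:chains-in-Prop:Forbidden_sublattices} with Corollary~\ref{cor_necessary} (the paper cites Corollary~\ref{cor_intabst} instead). The paper uses the median terms $m^\mp(u,v,w)$ rather than your $x\vee(y\wedge z)$ evaluated at $(v,w,u)$, but since $v\leq w$ both choices give exactly the same elements, $v\vee(u\wedge w)=v$ and $w\wedge(u\vee v)=w$.
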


\begin{proof}
    Suppose that there exist distinct elements $u,v,w\in L\setminus\{0_L,1_L\}$ such that $[v,w]$ is a chain of length greater than $1$, and $u$ is a common relative complement of $v$ and $w$. 
    Then 
    \[
        [m^-(u,v,w);m^+(u,v,w)]=[v;w]\cong C_k
    \]
    for some integer $k\geq 3$. By Proposition~\ref{Prop:Forbidden_sublattices}\ref{item:chains-in-Prop:Forbidden_sublattices} and Corollary~\ref{cor_intabst}, 
    the lattice $L$ cannot be \tworepr{}.
\end{proof}

A lattice $L$ is \emph{semimodular} if, for all $a,b,c\in L$,
\[
    a\prec b\quad\Longrightarrow\quad a\vee c\preceq b\vee c,
\]
where $x\preceq y$ means that either $x=y$ or $x\prec y$.

\begin{theorem}[{\cite[Chapter~V, \S4.1, Theorem~404]{Gratzer11}}]\label{Th:Eq_is_semimodular}
    The lattice of equivalence relations on a finite set is semimodular.
\end{theorem}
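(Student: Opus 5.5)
The statement is the classical fact that $\Eq{A}$ is semimodular: whenever $\alpha\prec\beta$ in $\Eq{A}$ and $\gamma\in\Eq{A}$, we have $\alpha\vee\gamma\preceq\beta\vee\gamma$. I would prove it directly from the block description of partitions, using Fact~\ref{fact_intervals-in-Eq(A)} to identify covers.

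\emph{Step 1: describe covers.} By Fact~\ref{fact_intervals-in-Eq(A)}, if $\alpha\le\beta$ and $A/\beta=\{A_1,\dots,A_t\}$, then $[\alpha;\beta]\cong\prod_i\Eq{[m_i]}$, where $m_i$ is the number of $\alpha$-blocks inside $A_i$. This interval is a two-element chain exactly when one $m_i$ equals $2$ and all the others equal $1$. Hence $\alpha\prec\beta$ holds if and only if $\beta$ arises from $\alpha$ by merging exactly two $\alpha$-blocks $B,C$ into one block and leaving all other blocks unchanged. As relations, this says $\beta$ is the join $\alpha\vee\theta_{b,c}$ for any $b\in B$, $c\in C$, where $\theta_{b,c}$ is the atom identifying $b$ and $c$.

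\emph{Step 2: compute the joins.} Since joins are associative and commutative,
\[
\beta\vee\gamma=(\alpha\vee\gamma)\vee\theta_{b,c}.
\]
So it suffices to show that for every $\delta\in\Eq{A}$ (here $\delta=\alpha\vee\gamma$) and every atom $\theta_{b,c}$, we have $\delta\preceq\delta\vee\theta_{b,c}$. There are two cases. If $(b,c)\in\delta$, then $\delta\vee\theta_{b,c}=\delta$. Otherwise, the join $\delta\vee\theta_{b,c}$ merges exactly the two $\delta$-blocks $b/\delta$ and $c/\delta$. Indeed, the partition with these two blocks united and all others untouched is an equivalence relation containing both $\delta$ and $(b,c)$, and it is the least such relation. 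By Step 1 this gives a cover, so $\delta\prec\delta\vee\theta_{b,c}$.

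\emph{Step 3: conclude.} Combining the two cases gives $\alpha\vee\gamma\preceq\beta\vee\gamma$, which is semimodularity.

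The only point needing care is Step 1, where covers must be characterized as merges of exactly two blocks. This follows at once from the product decomposition in Fact~\ref{fact_intervals-in-Eq(A)}, because $\Eq{[m]}$ has exactly two elements only when $m=2$. Finiteness of $A$ is not essential, but it matches the setting of the fact.
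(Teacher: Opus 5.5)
Your proof is correct: covers in $\Eq{A}$ are exactly the merges of two blocks, which you correctly read off from Fact~\ref{fact_intervals-in-Eq(A)} because $\abs{\Eq{[m]}}=2$ only for $m=2$. It then follows that $\beta\vee\gamma=(\alpha\vee\gamma)\vee\theta_{b,c}$ merges at most two blocks of $\alpha\vee\gamma$. The paper gives no proof of its own and simply cites Gr\"atzer's textbook, where semimodularity of partition lattices is established by essentially this same block-merging argument.
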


\subsection{Lattices up to 3 elements}
Let $k\leq 3$ be a natural number. 
Up to isomorphism, there is exactly one lattice with $k$ elements, namely the $k$-element chain $C_k$. 
These lattices are distributive lattices; hence, they are \tworepr{}.
\medskip 

\RLatticeItem{RD}{$C_1\cong\Eq{[1]}=\Con{([1];\emptyset)}$.}
 
\RLatticeItem{RD}{$C_2\cong\Eq{[2]}=\Con{([2];\emptyset)}$.}
    
\RLatticeItem{RD}{$C_3\cong\{ 0_{[3]},(12),1_{[3]} \}\cong\Con{([3];\tau_{112})}$.}


\subsection{Lattices with 4 elements}
Up to isomorphism, there are two lattices with 4 elements: $C_4$ and $C_2\times C_2$. 
These lattices are distributive lattices; hence, they are \tworepr{}.
\medskip

\RLatticeItem{RD}{$C_2\times C_2\cong 
    \{ 0_{[4]}, (12)(34), (13)(24),1_{[4]} \}=\Con{([4];M)}$ for 
    $M=\{ \tau_{1133}, \tau_{1212}, \tau_{2244}, \tau_{3434} \}$.}

\RLatticeItem{RD}{$C_4\cong
	\{ 0_{[4]}, (34), (234),1_{[4]} \}=\Con{([4];M)}$ for 
	$M=\{ \tau_{1113}, \tau_{1121} \}$.}


\subsection{Lattices with 5 elements}\setcounter{LCard}{5}\setcounter{LType}{1}
Up to isomorphism, there are five lattices with 5 elements (see Figure~\ref{Fig:ConLat_on_5}). 
All these lattices are \tworepr.
\medskip

\RLatticeItem{RD}{$L_{5,1}\cong
    \{ 0_{[4]}, (34), (234), (12)(34), 1_{[4]} \}=\Con{([4];M)}$ 
    for $M=\{ \tau_{1222}, \tau_{3433} \}$.}
    
\RLatticeItem{RD}{$L_{5,1}^d\cong
    \{ 0_{[4]}, (23),(34), (234), 1_{[4]} \}=\Con{([4];M)}$ 
    for $M=\{ \tau_{2233}, \tau_{2444} \}$.}
    
\RLatticeItem{R}{$K_{5,1}\cong\Con{([3];\emptyset)}=\Eq{[3]}$.}
    
\RLatticeItem{R}{$K_{5,2}\cong
    \{ 0_{[4]}, (34), (12)(34), (13)(24),1_{[4]} \}=\Con{([4];M)}$ 
    for $M=\{ \tau_{1133}, \tau_{2244}, \tau_{3434} \}$.}
    
\RLatticeItem{RD}{$K_{5,3}\cong
    \{ 0_{[5]}, (45), (345), (2345), 1_{[5]} \}=\Con{([5];M)}$ 
    for $M=\{ \tau_{11114}, \tau_{11131}, \tau_{11211} \}$.}
    

\subsection{Lattices with 6 elements}\setcounter{LCard}{6}\setcounter{LType}{1}
Up to isomorphism, there are fifteen lattices with 6 elements. 
\medskip

\NRLatticeItem{NR}{$L_{6,1}$: Let $L=L_{6,1}$.
For $u,v,w,s\in L$ as indicated in Figure~\ref{Fig:Lat_6a} on page~\pageref{Fig:Lat_6a}, we have $[m^-(u,v,w);m^+(u,v,w)] = [s;1] \cong C_2\times C_2$. It follows from Proposition~\ref{Prop:Forbidden_sublattices}\ref{item:C2xC2-in-Prop:Forbidden_sublattices} and Corollary~\ref{cor_intabst} that $L$ is not \tworepr{}.}

\RLatticeItem{R}{$L_{6,1}^d\cong
    \{ 0_{[4]}, (14),(23), (12)(34),(14)(23), 1_{[4]} \}=\Con{([4];M)}$
    for $M=\{ \tau_{1221}, \tau_{3443} \}$.} 

\RLatticeItem{R}{$L_{6,2}\cong
    \{ 0_{[4]}, (34), (134), (234), (12)(34), 1_{[4]}\}=\Con{([4];M)}$ 
    for $M=\{ \tau_{3344}, \tau_{3433} \}$.} 

\RLatticeItem{R}{$L_{6,2}^d\cong
    \{ 0_{[4]}, (23),(24),(34), (234), 1_{[4]} \}=\Con{([4];M)}$ 
    for $M=\{ \tau_{2333}, \tau_{2444} \}$.}
        
\RLatticeItem{R}{$L_{6,3}
    \cong\{ 0_{[5]}, (12), (123), (123)(45),(124)(35), 1_{[5]} \}
    =\Con{([5];M)}$
    for $M=\{ \tau_{11211}, \tau_{11313}, \tau_{11144}, \tau_{33355} \}$.} 

\RLatticeItem{R}{$L_{6,3}^d\cong
    \{ 0_{[5]}, (12), (12)(34),(13)(24), (1234), 1_{[5]} \}=\Con{([5];M)}$ 
    for $M=\{ \tau_{11112}, \tau_{12121}, \tau_{11331}, \tau_{22444} \}$.} 

\RLatticeItem{RD}{$L_{6,4}\cong
    \{ 0_{[5]}, (12), (123), (1234), (1235), 1_{[5]} \}=\Con{([5];M)}$ 
    for $M=\{ \tau_{11115}, \tau_{11133}, \tau_{11211} \}$.} 

\RLatticeItem{RD}{$L_{6,4}^d\cong
    \{ 0_{[5]}, (12),(13), (123), (1234), 1_{[5]} \}=\Con{([5];M)}$ 
    for $M=\{ \tau_{11114}, \tau_{11311}, \tau_{22232} \}$.} 
    
\NRLatticeItem{NR}{$K_{6,1}$:  Let $L=K_{6,1}$. 
    The subset $\{u,v,w\}$ of $L$, where $u,v,w\in L$ are as indicated in 
    Figure~\ref{Fig:Lat_6b} on page~\pageref{Fig:Lat_6b}, 
    generates a $(0,1)$-sublattice that is isomorphic to $M_3$. 
    By Proposition~\ref{prop_median}, the lattice $L$ is not \tworepr{} since $|L|=6$ is not a Bell number.}

\NRLatticeItem{NR}{$K_{6,2}$:  Let $L=K_{6,2}$. 
    The subset $\{u,v,w\}$ of $L$, where $u,v,w\in L$ are as indicated in 
    Figure~\ref{Fig:Lat_6b} on page~\pageref{Fig:Lat_6b}, 
    generates a $(0,1)$-sublattice that is isomorphic to $M_3$. 
    By Proposition~\ref{prop_median}, the lattice $L$ is not \tworepr{} since $|L|=6$ is not a Bell number.}

\RLatticeItem{R}{$K_{6,3}$: 
    \[ 
        K_{6,3}\cong
        \{ 0_{[6]}, (15)(36), (23)(456), (15)(24)(36), (123)(456), 1_{[6]}\}=\Con{([6];M)}
    \]
    for $M=\{ \tau_{111555}, \tau_{232322}, \tau_{333666}, \tau_{445445},\tau_{446446} \}$.}

\RLatticeItem{RD}{$K_{6,4}\cong
    \{ 0_{[4]}, (23),(34), (12)(34), (234), 1_{[4]}\}=\Con{([4];M)}$ 
    for $M=\{ \tau_{1222}, \tau_{3443} \}$.} 

\NRLatticeItem{NR}{$K_{6,5}$: Let $L=K_{6,5}$. 
	Applying Corollary~\ref{Cor:Chains} to the elements $u,v,w\in L$ indicated in Figure~\ref{Fig:Lat_6c} on page~\pageref{Fig:Lat_6c} shows that the lattice $L$ is not \tworepr{}.}
        
\RLatticeItem{RD}{$K_{6,6}\cong
        \{ 0_{[5]}, (12), (123),(124), (1234), 1_{[5]}\}=\Con{([5];M)}$ 
        for $M=\{ \tau_{11113}, \tau_{11114}, \tau_{11141}, \tau_{11221} \}$.}

\RLatticeItem{RD}{$K_{6,7}\cong
    \{ 0_{[6]}, (56), (456), (3456), (23456), 1_{[6]} \}=\Con{([6];M)}$  
    for $M=\{ \tau_{111115}, \tau_{111141}, \tau_{111311}, \tau_{112111} \}$.}


\subsection{Lattices with 7 elements}\setcounter{LCard}{7}\setcounter{LType}{1}
Up to isomorphism, there are fifty-three lattices with 7 elements. 
\medskip

\NRLatticeItem{NR}{$L_{7,1}$: Let $L=L_{7,1}$. 
    The subset $\{u,v,w\}$ of $L$, where $u,v,w\in L$ are as indicated in 
    Figure~\ref{Fig:Lat7_part1} on page~\pageref{Fig:Lat7_part1}, 
    generates a $(0,1)$-sublattice that is isomorphic to $M_3$. 
    By Proposition~\ref{prop_median}, the lattice $L$ is not \tworepr{} since $|L|=7$ is not a Bell number.}  
        
\NRLatticeItem{NR}{$L_{7,1}^d$: Let $L=L_{7,1}^d$. 
    The subset $\{u,v,w\}$ of $L$, where $u,v,w\in L$ are as indicated in 
    Figure~\ref{Fig:Lat7_part1} on page~\pageref{Fig:Lat7_part1}, 
    generates a $(0,1)$-sublattice that is isomorphic to $M_3$. 
    By Proposition~\ref{prop_median}, the lattice $L$ is not \tworepr{} since $|L|=7$ is not a Bell number.}
        
\NRLatticeItem{NR}{$L_{7,2}$: Let $L=L_{7,2}$, 
    and suppose that $\phi\colon L\to\Eq{A}$ is a \twogood{} for a finite set $A$. 
    For the DNF $t^-=(x\wedge y)\vee(x\wedge z)\in\FL{3}$ and the corresponding CNF $t^+=x\wedge(y\vee z)\in\FL{3}$, we have
    \[
        [t^-(u,v,w);t^+(u,v,w)]
        = [0_L;u]
    \]
	where $u,v,w\in L$ are as indicated in Figure~\ref{Fig:Lat7_part1} on page~\pageref{Fig:Lat7_part1}.
	By Corollary~\ref{cor_intabst}, $\varphi([0_L;u]_L)=[0_A;\varphi(u)]_{\Eq{A}}$, hence $\phi(u)\wedge\phi(s)=0_A\prec\phi(u)$. 
    By Theorem~\ref{Th:Eq_is_semimodular}, $\Eq{A}$ is semimodular, thus it follows that $\phi(s)\prec\phi(u)\vee\phi(s)=1_A$,  
    contradicting $\phi(s)<\phi(v)< 1_A$.}
        
\NRLatticeItem{NR}{$L_{7,2}^d$: Let $L=L_{7,2}^d$, 
    and suppose that $\phi\colon L\to\Eq{A}$ is a \twogood{} for a finite set $A$. 
    For the DNF $t^-=x\vee(y\wedge z)\in\FL{3}$ and the corresponding CNF $t^+=(x\vee y)\wedge(x\vee z)\in\FL{3}$,
    we have  
    \begin{align*}
        [t^-(u,v,w);t^+(u,v,w)]_L &= [u;1_L]_L,\\
        [m^-(u,v,w);m^+(u,v,w)]_L &= [0_L;s]_L,
    \end{align*}
    where $u,v,w,s\in L$ are as indicated in Figure~\ref{Fig:Lat7_part1} on
    page~\pageref{Fig:Lat7_part1}.
    By Corollary~\ref{cor_intabst}, $\varphi([u;1_L]_L)=[\varphi(u);1_A]_{\Eq{A}}$ and $\varphi([0_L;s]_L)=[0_A;\varphi(s)]_{\Eq{A}}$.
    The first one of these two equalities implies that $\phi(u)\prec 1_A$, hence the equivalence relation $\phi(u)$ has exactly two blocks.
    The second equality implies that $0_A\prec\phi(u),\phi(v),\phi(w)$ and these three atoms have pairwise join $\phi(s)$, hence the equivalence relation $\phi(s)$ has a unique non-singleton block $B$ of size $3$. 
    However, $\phi(u)\wedge\phi(s)=0_A$ implies that the three elements of $B$ belong 
    to distinct blocks of $\phi(u)$, a contradiction.
    Therefore, $L$ is not \tworepr{}.}
    
\NRLatticeItem{NR}{$L_{7,3}$: Let $L=L_{7,3}$. 
	Consider the term $t=(x\wedge y)\vee(x\wedge z)\in\FL{3}$. For $u,v,w\in L$ as indicated in Figure~\ref{Fig:Lat7_part1} on page~\pageref{Fig:Lat7_part1}, we have $[t^-(u,v,w);t^+(u,v,w)] = [0;u] \cong C_3$. It follows from Proposition~\ref{Prop:Forbidden_sublattices}\ref{item:chains-in-Prop:Forbidden_sublattices} and Corollary~\ref{cor_intabst} that $L$ is not \tworepr{}.}
        
\NRLatticeItem{NR}{$L_{7,3}^d$: see Example~\ref{example_L73d}.} 
    
\RLatticeItem{R}{$L_{7,4}\cong
    \{ 0_{[4]}, (12),(34), (12)(34),(134),(234), 1_{[4]}\}=\Con{([4];M)}$ 
    for $M=\{ \tau_{3344} \}$.}
        
\RLatticeItem{R}{$L_{7,4}^d\cong
    \{ 0_{[4]}, (12),(13),(23), (123),(124), 1_{[4]}\}=\Con{([4];M)}$ 
    for $M=\{ \tau_{1112},\tau_{1114} \}$.}
    
\RLatticeItem{R}{$L_{7,5}$:
    \begin{equation*} 
        L_{7,5}
        \cong\{ 0_{[4]}, (12),(13), (12)(34),(13)(24), (123), 1_{[4]}\}
        =\Con{([4];M)}
    \end{equation*} 
    for $M=\{ \tau_{1133},\tau_{1212} \}$.}

\RLatticeItem{R}{$L_{7,5}^d\cong
    \{ 0_{[5]}, (23), (12)(45),(14)(35), (123)(45), (14)(235), 1_{[5]} \}=\Con{([5];M)}$ 
    for $M=\{ \tau_{11144},\tau_{12212},\tau_{33355},\tau_{45545} \}$.}
    
\NRLatticeItem{NR}{$L_{7,6}$: Let $L=L_{7,6}$. For $u\in L$ as indicated in Figure~\ref{Fig:Lat7_part2} on page~\pageref{Fig:Lat7_part2}, the filter $[u;1]\cong K_{6,1} \cong M_4$ is not \tworepr{} (see case 6/9), hence it follows from Proposition~\ref{prop_filters} that $L$ cannot be \tworepr{} either.}

\NRLatticeItem{NR}{$L_{7,6}^d$: Let $L=L_{7,6}^d$. For $u,v,w,s\in L$ as indicated in Figure~\ref{Fig:Lat7_part2} on page~\pageref{Fig:Lat7_part2}, we have $[m^-(u,v,w);m^+(u,v,w)] = [0;s] \cong M_4$. It follows from Proposition~\ref{Prop:Forbidden_sublattices}\ref{item:Mk-in-Prop:Forbidden_sublattices} and Corollary~\ref{cor_intabst} that $L$ is not \tworepr{}.}

\NRLatticeItem{NR}{$L_{7,7}$: Let $L=L_{7,7}$.
 	Applying Corollary~\ref{Cor:Chains} to the elements $u,v,w\in L$ indicated in Figure~\ref{Fig:Lat7_part2} on page~\pageref{Fig:Lat7_part2} shows that the lattice $L$ is not \tworepr{}.}

\NRLatticeItem{NR}{$L_{7,7}^d$: Let $L=L_{7,7}^d$.
	Applying Corollary~\ref{Cor:Chains} to the elements $u,v,w\in L$ indicated in Figure~\ref{Fig:Lat7_part2} on page~\pageref{Fig:Lat7_part2} shows that the lattice $L$ is not \tworepr{}.}

\NRLatticeItem{NR}{$L_{7,8}$: Let $L=L_{7,8}$. 
	Applying Corollary~\ref{Cor:Chains} to the elements $u,v,w\in L$ indicated in Figure~\ref{Fig:Lat7_part2} on page~\pageref{Fig:Lat7_part2} shows that the lattice $L$ is not \tworepr{}.}

\NRLatticeItem{NR}{$L_{7,8}^d$: Let $L=L_{7,8}^d$. 
	Applying Corollary~\ref{Cor:Chains} to the elements $u,v,w\in L$ indicated in Figure~\ref{Fig:Lat7_part2} on page~\pageref{Fig:Lat7_part2} shows that the lattice $L$ is not \tworepr{}.}

\RLatticeItem{R}{$L_{7,9}\cong\{0_{[5]}, (13), (12)(34),(13)(24), (125)(34), (1234), 1_{[5]} \}=\Con([5];M)$ 
    for $M=\{ \tau_{11331}, \tau_{12121}, \tau_{34344} \}$.} 

\RLatticeItem{R}{$L_{7,9}^d
    \cong\{0_{[5]}, (15), (12)(34),(135), (125)(34), (135)(24), 1_{[5]} \}
    =\Con{([5];M)}$ for $M=\{ \tau_{11115}, \tau_{12121}, \tau_{33553}, \tau_{34343} \}$.}

\RLatticeItem{R}{$L_{7,10}
    \cong\{ 0_{[6]}, (14), (123), (1234), (1234)(56),(1235)(46), 1_{[6]}\}=\Con{([6];M)}$ 
    for $M=\{\tau_{111155},\tau_{111414},\tau_{112111},\tau_{131113},\tau_{444466}\}$.}

\RLatticeItem{R}{$L_{7,10}^d
    \cong\{ 0_{[5]}, (12), (12)(34),(13)(24), (12)(345), (1234), 1_{[5]}\}=\Con{([5];M)}$ 
    for $M=\{ \tau_{11333},\tau_{12121},\tau_{22444},\tau_{33334}\}$.}

\NRLatticeItem{NR}{$L_{7,11}$: Let $L=L_{7,11}$. For $u\in L$ as indicated in Figure~\ref{Fig:Lat7_part3} on page~\pageref{Fig:Lat7_part3}, the filter $[u;1]\cong K_{6,2}$ is not \tworepr{} (see case 6/10), hence it follows from Proposition~\ref{prop_filters} that $L$ cannot be \tworepr{} either.}

\NRLatticeItem{NR}{$L_{7,11}^d$: Let $L=L_{7,11}^d$. Consider the term $t=x\vee(y\wedge z)\in\FL{3}$. For $u,v,w,s\in L$ as indicated in Figure~\ref{Fig:Lat7_part3} on page~\pageref{Fig:Lat7_part3}, we have $[t^-(u,v,w);t^+(u,v,w)] = [u;s] \cong C_3$. It follows from Proposition~\ref{Prop:Forbidden_sublattices}\ref{item:Mk-in-Prop:Forbidden_sublattices} and Corollary~\ref{cor_intabst} that $L$ is not \tworepr{}.}
    
\RLatticeItem{R}{$L_{7,12}\cong 
    \{ 0_{[5]}, (12), (12)(34),(125), (124)(35), (125)(34), 1_{[5]} \}=\Con([5];M)$
    for $M=\{ \tau_{11122},\tau_{11441},\tau_{33553} \}$.}
    
\NRLatticeItem{NR}{$L_{7,12}^d$: Let $L=L_{7,12}^d$. For $u,v,w\in L$ as indicated in Figure~\ref{Fig:Lat7_part3} on page~\pageref{Fig:Lat7_part3}, we have $[m^-(u,v,w);m^+(u,v,w)] = [v\wedge w;v\vee w] \cong C_2 \times C_2$. It follows from Proposition~\ref{Prop:Forbidden_sublattices}\ref{item:C2xC2-in-Prop:Forbidden_sublattices} and Corollary~\ref{cor_intabst} that $L$ is not \tworepr{}.}
    
\NRLatticeItem{NR}{$L_{7,13}$: Let $L=L_{7,13}$. For $u\in L$ as indicated in Figure~\ref{Fig:Lat7_part3} on page~\pageref{Fig:Lat7_part3}, the filter $[u;1]\cong L_{6,1}$ is not \tworepr{} (see case 6/1), hence it follows from Proposition~\ref{prop_filters} that $L$ cannot be \tworepr{} either.}
    
\RLatticeItem{R}{$L_{7,13}^d$: 
    \[
        L_{7,13}^d
        \cong\{ 0_{[5]}, (12),(34), (12)(34),(13)(24), (1234), 1_{[5]} \}
        =\Con([5];M)
    \]
    for $M=\{ \tau_{11112},\tau_{11331},\tau_{22444} \}$.}
    
\RLatticeItem{R}{$L_{7,14}$: 
    \begin{align*}
        L_{7,14}
        &\cong \{ 0_{[7]}, (12), (125)(37), (125)(37)(46), (1234)(56), (1234)(567), 1_{[7]} \}\\
        &= \Con([7];M)
    \end{align*}
    for $M=\{ \tau_{2111211},\tau_{3332323},\tau_{4434443},\tau_{5555222}, \tau_{6655655}, \tau_{7777333} \}$.}
        
\RLatticeItem{R}{$L_{7,14}^d$: 
    \begin{align*}
        L_{7,14}^d
        &\cong \{ 0_{[7]}, (14)(27), (14)(27)(35), (123)(45), (123)(457), (123457), 1_{[7]} \}\\
        &= \Con([7];M)
    \end{align*}
    for $M=\{ \tau_{1114414},\tau_{1121221},\tau_{1311133}, \tau_{2227777}, \tau_{4454544} \}$.}
    
\RLatticeItem{RD}{$L_{7,15}$: 
    \begin{align*}
        L_{7,15}
        &\cong \{ 0_{[5]}, (12), (12)(35), (125),(1235), (1245), 1_{[5]} \}
        = \Con([5];M)
    \end{align*}
    for $M=\{ \tau_{11141},\tau_{11555}, \tau_{22221} \}$.}
    
\RLatticeItem{RD}{$L_{7,15}^d$: 
    \[ 
        L_{7,15}^d\cong 
        \{ 0_{[5]}, (14),(23), (14)(23), (134), (1234), 1_{[5]} \}=\Con([5];M)
    \]
    for $M=\{ \tau_{11112},\tau_{11141},\tau_{34433} \}$.}
    
\RLatticeItem{R}{$L_{7,16}$: 
    \[
        L_{7,16}\cong 
        \{ 0_{[5]}, (12), (124), (1234),(1245), (124)(35), 1_{[5]} \}=\Con([5];M)
    \]
    for $M=\{ \tau_{11144},\tau_{22442} \}$.}
    
\RLatticeItem{R}{$L_{7,16}^d\cong 
    \{ 0_{[5]}, (12),(13),(23), (123), (1235), 1_{[5]} \}=\Con([5];M)$
    for $M=\{ \tau_{11112},\tau_{11113},\tau_{11151} \}$.}
    
\NRLatticeItem{NR}{$L_{7,17}$: Let $L=L_{7,17}$. For $u\in L$ as indicated in Figure~\ref{Fig:Lat7_part4} on page~\pageref{Fig:Lat7_part4}, the filter $[u;1]\cong K_{6,5}$ is not \tworepr{} (see case 6/13), hence it follows from Proposition~\ref{prop_filters} that $L$ cannot be \tworepr{} either.}
    
\NRLatticeItem{NR}{$L_{7,17}^d$: Let $L=L_{7,17}^d$.
	Applying Corollary~\ref{Cor:Chains} to the elements $u,v,w\in L$ indicated in Figure~\ref{Fig:Lat7_part4} on page~\pageref{Fig:Lat7_part4} shows that the lattice $L$ is not \tworepr{}.}

\RLatticeItem{R}{$L_{7,18}\cong 
    \{ 0_{[6]}, (12), (123), (1234), (1234)(56), (1235)(46), 1_{[6]} \}=\Con([6];M)$
    for $M=\{ \tau_{111155},\tau_{111414}, \tau_{112111}, \tau_{113311}, \tau_{444466} \}$.}
        
\RLatticeItem{R}{$L_{7,18}^d$: 
    \[
        L_{7,18}^d\cong 
        \{ 0_{[6]}, (13), (12)(34),(13)(24), (1234), (1234)(56), 1_{[6]} \}=\Con([6];M)
    \]
    for $M=\{ \tau_{113313}, \tau_{121211},\tau_{343434}, \tau_{555566} \}$.}
    
\RLatticeItem{RD}{$L_{7,19}$: 
    \begin{align*}
        L_{7,19}
        &\cong \{ 0_{[6]}, (46), (15)(46),(15)(23)(46), (15)(346), (15)(2346), 1_{[6]} \}\\
        &= \Con([6];M)
    \end{align*}
    for $M=\{ \tau_{115151}, \tau_{232222},  \tau_{244424}, \tau_{464466} \}$.}
        
\RLatticeItem{RD}{$L_{7,19}^d$: 
    \[
        L_{7,19}^d\cong\{ 0_{[6]}, (23), (14)(23), (235), (14)(235), (14)(2356), 1_{[6]} \}=\Con([6];M)
    \]
    for $M=\{ \tau_{111414}, \tau_{222225}, \tau_{233323},\tau_{266266} \}$.}
        
\RLatticeItem{RD}{$L_{7,20}$: 
    \[
        L_{7,20}\cong\{ 0_{[6]}, (16), (16)(25), (146)(25), (12456), (146)(235), 1_{[6]} \}=\Con([6];M)
    \]
    for $M=\{ \tau_{161116}, \tau_{255552}, \tau_{455454}, \tau_{464464} \}$.}
    
\RLatticeItem{RD}{$L_{7,20}^d$: 
    \[
        L_{7,20}^d\cong\{ 0_{[6]}, (25),(45), (245), (2345), (12345), 1_{[6]} \}=\Con([6];M)
    \]
    for $M=\{ \tau_{111112}, \tau_{222552}, \tau_{223222}, \tau_{343443} \}$.}
    
\NRLatticeItem{NR}{$K_{7,1}$:  Let $L=K_{7,1}$. 
    The subset $\{u,v,w\}$ of $L$, where $u,v,w\in L$ are as indicated in 
    Figure~\ref{Fig:Lat7_part5} on page~\pageref{Fig:Lat7_part5}, 
    generates a $(0,1)$-sublattice that is isomorphic to $M_3$. 
    By Proposition~\ref{prop_median}, the lattice $L$ is not \tworepr{} since $|L|=7$ is not a Bell number.}

\NRLatticeItem{NR}{$K_{7,2}$: Let $L=K_{7,2}$. 
    The subset $\{u,v,w\}$ of $L$, where $u,v,w\in L$ are as indicated in 
    Figure~\ref{Fig:Lat7_part5} on page~\pageref{Fig:Lat7_part5}, 
    generates a $(0,1)$-sublattice that is isomorphic to $M_3$. 
    By Proposition~\ref{prop_median}, the lattice $L$ is not \tworepr{} since $|L|=7$ is not a Bell number.}
    
\NRLatticeItem{NR}{$K_{7,3}$: Let $L=K_{7,3}$. 
    The subset $\{u,v,w\}$ of $L$, where $u,v,w\in L$ are as indicated in 
    Figure~\ref{Fig:Lat7_part5} on page~\pageref{Fig:Lat7_part5}, 
    generates a $(0,1)$-sublattice that is isomorphic to $M_3$. 
    By Proposition~\ref{prop_median}, the lattice $L$ is not \tworepr{} since $|L|=7$ is not a Bell number.}
    
\NRLatticeItem{NR}{$K_{7,4}$: Let $L=K_{7,4}$. 
    The subset $\{u,v,w\}$ of $L$, where $u,v,w\in L$ are as indicated in 
    Figure~\ref{Fig:Lat7_part5} on page~\pageref{Fig:Lat7_part5}, 
    generates a $(0,1)$-sublattice that is isomorphic to $M_3$. 
    By Proposition~\ref{prop_median}, the lattice $L$ is not \tworepr{} since $|L|=7$ is not a Bell number.}
    
\NRLatticeItem{NR}{$K_{7,5}$: Let $L=K_{7,5}$. 
    The subset $\{u,v,w\}$ of $L$, where $u,v,w\in L$ are as indicated in 
    Figure~\ref{Fig:Lat7_part5} on page~\pageref{Fig:Lat7_part5}, 
    generates a $(0,1)$-sublattice that is isomorphic to $M_3$. 
    By Proposition~\ref{prop_median}, the lattice $L$ is not \tworepr{} since $|L|=7$ is not a Bell number.}
    
\RLatticeItem{R}{$K_{7,6}$: 
    \begin{align*}
        K_{7,6}
        &\cong \{ 0_{[6]}, (34)(56), (13)(26)(45), (156)(34),(234)(56), (156)(234), 1_{[6]} \}\\
        &= \Con([6];M)
    \end{align*}
    for $M=\{ \tau_{133311},\tau_{266622},\tau_{333443}, \tau_{455544}, \tau_{565666} \}$.}
    
\NRLatticeItem{NR}{$K_{7,7}$: Let $L=K_{7,7}$. 
	Applying Corollary~\ref{Cor:Chains} to the elements $u,v,w\in L$ indicated in Figure~\ref{Fig:Lat7_part5} on page~\pageref{Fig:Lat7_part5} shows that the lattice $L$ is not \tworepr{}.}

\NRLatticeItem{NR}{$K_{7,8}$: Let $L=K_{7,8}$. For $u,v,w\in L$ as indicated in Figure~\ref{Fig:Lat7_part5} on page~\pageref{Fig:Lat7_part5}, we have $[m^-(u,v,w);m^+(u,v,w)] = [v\wedge w;u\vee v] \cong C_3$. It follows from Proposition~\ref{Prop:Forbidden_sublattices}\ref{item:chains-in-Prop:Forbidden_sublattices} and Corollary~\ref{cor_intabst} that $L$ is not \tworepr{}.}
    
\RLatticeItem{RD}{$K_{7,9}$: 
    \[
        K_{7,9}\cong\{ 0_{[5]}, (12),(14), (124), (124)(35), (1245), 1_{[5]} \}=\Con([5];M)
    \]
    for $M=\{ \tau_{11112},\tau_{11144},\tau_{11515} \}$.}
        
\RLatticeItem{R}{$K_{7,10}$: 
    \[
        K_{7,10}\cong\{ 0_{[5]}, (14), (14)(23), (124), (134), (1234), 1_{[5]} \}=\Con([5];M)
    \]
    for $M=\{ \tau_{11112},\tau_{11113},\tau_{11411}, \tau_{14111} \}$.}
    
\NRLatticeItem{NR}{$K_{7,11}$: Let $L=K_{7,11}$. 
	Applying Corollary~\ref{Cor:Chains} to the elements $u,v,w\in L$ indicated in Figure~\ref{Fig:Lat7_part5} on page~\pageref{Fig:Lat7_part5} shows that the lattice $L$ is not \tworepr{}.}
    
\RLatticeItem{R}{$K_{7,12}\cong 
    \{ 0_{[6]}, (12), (124), (123)(45), (124)(35), (12345), 1_{[6]} \}=\Con([6];M)$
    for $M=\{ \tau_{111121}, \tau_{111441}, \tau_{113131}, \tau_{222444}, \tau_{445455} \}$.}
        
\RLatticeItem{RD}{$K_{7,13}$: 
    \[
        K_{7,13}\cong
        \{ 0_{[7]}, (12), (123), (1234), (12345), (123456), 1_{[7]} \}=\Con{([7];M)}
    \]
    for $M=\{ \tau_{1111116}, \tau_{1111151}, \tau_{1111411}, \tau_{1113111}, \tau_{1121111} \}$.}


\clearpage

\subsection{Figures}

\ 

\begin{figure}[ht]
\centering
\scalebox{1.0}{
\begin{tikzpicture}[x=0.9mm,y=0.9mm, line width=0.35pt,
  lab/.style={draw=none, fill=none, below=3mm, font=\small}]

\newcommand{\vertex}[2]{\filldraw[black] (#1,#2) circle (1.2mm);}
\newcommand{\vertexL}[5]{\filldraw[black] (#1,#2) circle (1.2mm); \node at (#4,#5) {$#3$};}
\newcommand{\lat}[4]{%
  \begin{scope}[shift={(#1,#2)}]
    #4
    \node[lab] at (0,-3) {$#3$};
  \end{scope}
}


\lat{0}{0}{L_{5,1}}{%
    \draw (0,0)--(0,10)--(-10,20)--(0,30)--(10,20)--(0,10);
    \vertex{0}{0}\vertex{0}{10}\vertex{-10}{20}\vertex{0}{30}\vertex{10}{20}}

\lat{15}{0}{L_{5,1}^d}{%
    \draw (0,0)--(-10,10)--(0,20)--(0,30); 
    \draw (0,0)--( 10,10)--(0,20);
    \vertex{0}{0}\vertex{-10}{10}\vertex{10}{10}\vertex{0}{20}\vertex{0}{30}}

\lat{40}{0}{K_{5,1}\cong M_3}{%
    \draw (0,0)--(-10,10)--(0,20); 
    \draw (0,0)--(  0,10)--(0,20);
    \draw (0,0)--( 10,10)--(0,20); 
    \vertex{0}{0}\vertex{-10}{10}\vertex{0}{10}\vertex{10}{10}\vertex{0}{20}}

\lat{65}{0}{K_{5,2}\cong N_5}{%
    \draw (0,0)--(-10,10)--(-10,20)--(0,30)--(10,15)--(0,0);
    \vertex{0}{0}\vertex{-10}{10}\vertex{-10}{20}\vertex{0}{30}\vertex{10}{15}}

\lat{80}{0}{K_{5,3}}{%
    \draw (0,0)--(0,40);
    \vertex{0}{0}\vertex{0}{10}\vertex{0}{20}\vertex{0}{30}\vertex{0}{40}}

\end{tikzpicture}
}
\caption{Non-isomorphic lattices with $5$ elements}
\label{Fig:ConLat_on_5}
\end{figure}

\begin{figure}[ht]
\centering
\scalebox{1.0}{
\begin{tikzpicture}[x=0.9mm,y=0.9mm, line width=0.35pt,
  lab/.style={draw=none, fill=none, below=3mm, font=\small}]

\newcommand{\vertex}[2]{\filldraw[black] (#1,#2) circle (1.2mm);}
\newcommand{\vertexL}[5]{\filldraw[black] (#1,#2) circle (1.2mm); \node at (#4,#5) {$#3$};}
\newcommand{\lat}[4]{%
  \begin{scope}[shift={(#1,#2)}]
    #4
    \node[lab] at (0,-3) {$#3$};
  \end{scope}
}


\lat{0}{0}{L_{6,1}}{
  \draw (0,0)--(0,10)--(0,20)--(0,30);
  \draw (0,0)--(-6,15)--(0,30);
  \draw (0,10)--(6,20)--(0,30);
  \vertex{0}{0}
  \vertexL{0}{10}{s}{3}{10}
  \vertexL{0}{20}{v}{-2.5}{18}
  \vertex{0}{30}\vertexL{-6}{15}{u}{-9.5}{15}\vertexL{6}{20}{w}{9.5}{20}
}

\lat{22}{0}{L_{6,1}^d}{
  \draw (0,30)--(0,20)--(0,10)--(0,0);
  \draw (0,30)--(-6,15)--(0,0);
  \draw (0,20)--(6,10)--(0,0);
  \vertex{0}{30}\vertex{0}{20}\vertex{0}{10}\vertex{0}{0}\vertex{-6}{15}\vertex{6}{10}
}

\lat{44}{0}{L_{6,2}}{
  \draw (0,0)--(-6,10);
  \draw (0,0)--(0,10);
  \draw (0,0)--(6,10);
  \draw (-6,10)--(0,20);
  \draw (0,10)--(0,20);
  \draw (6,10)--(0,20);
  \vertex{0}{0}\vertex{-6}{10}\vertex{0}{10}\vertex{6}{10}\vertex{0}{20}\vertex{0}{30}
  \draw (0,20)--(0,30);
}

\lat{66}{0}{L_{6,2}^d}{
  \draw (0,30)--(-6,20);
  \draw (0,30)--(0,20);
  \draw (0,30)--(6,20);
  \draw (-6,20)--(0,10);
  \draw (0,20)--(0,10);
  \draw (6,20)--(0,10);
  \draw (0,10)--(0,0);
  \vertex{0}{30}\vertex{-6}{20}\vertex{0}{20}\vertex{6}{20}\vertex{0}{10}\vertex{0}{0}
}

\lat{88}{0}{L_{6,3}}{
  \draw (0,0)--(0,10);
  \draw (0,10)--(-5,25)--(0,40);
  \draw (0,10)--(5,20)--(5,30)--(0,40);
  \vertex{0}{0}\vertex{0}{10}\vertex{-5}{25}\vertex{5}{20}\vertex{5}{30}\vertex{0}{40}
}

\end{tikzpicture}
}
\caption{Non-isomorphic lattices with $6$ elements (Part I)}
\label{Fig:Lat_6a}
\end{figure}


\begin{figure}[ht]
\centering
\scalebox{1.0}{
\begin{tikzpicture}[x=0.9mm,y=0.9mm, line width=0.35pt,
  lab/.style={draw=none, fill=none, below=3mm, font=\small}]

\newcommand{\vertex}[2]{\filldraw[black] (#1,#2) circle (1.2mm);}
\newcommand{\vertexL}[5]{\filldraw[black] (#1,#2) circle (1.2mm); \node at (#4,#5) {$#3$};}
\newcommand{\lat}[4]{%
  \begin{scope}[shift={(#1,#2)}]
    #4
    \node[lab] at (0,-3) {$#3$};
  \end{scope}
}


\lat{0}{-62}{L_{6,3}^d}{
  \draw (0,40)--(0,30);
  \draw (0,30)--(-5,15)--(0,0);
  \draw (0,30)--(5,20)--(5,10)--(0,0);
  \vertex{0}{40}\vertex{0}{30}\vertex{-5}{15}\vertex{5}{20}\vertex{5}{10}\vertex{0}{0}
}

\lat{22}{-62}{L_{6,4}}{
  \draw (0,20)--(-6,30)--(0,40)--(6,30)--(0,20);
  \draw (0,0)--(0,10) -- (0,20);
  \vertex{0}{0}\vertex{0}{10}\vertex{0}{20}\vertex{-6}{30}\vertex{6}{30}\vertex{0}{40}
}

\lat{44}{-62}{L_{6,4}^d}{
  \draw (0,0)--(-6,10)--(0,20)--(6,10)--(0,0);
  \draw (0,20)--(0,40);
  \vertex{0}{0}\vertex{-6}{10}\vertex{6}{10}\vertex{0}{20}\vertex{0}{30}\vertex{0}{40}
}

\lat{66}{-62}{K_{6,1}}{
  \draw (-8,10)--(0,20) -- (8,10);
  \draw (-3,10)--(0,20);
  \draw (-3,10)--(0,0);
  \draw (3,10)--(0,20);
  \draw (3,10)--(0,0);
  \draw (-8,10)--(0,0);
  \draw (8,10)--(0,0);
  \vertex{0}{20}
  \vertexL{-8}{10}{u}{-11}{10}\vertexL{-3}{10}{v}{-0.5}{10}\vertex{3}{10}\vertexL{8}{10}{w}{11}{10}
  \vertex{0}{0}
}

\lat{88}{-62}{K_{6,2}}{
  \draw (0,0)--(-6,15)--(0,30)--(6,15)--(0,0);
  \draw (0,0)--(0,10)--(0,20)--(0,30);
  \vertex{0}{30}
  \vertexL{-6}{15}{u}{-9}{15}\vertexL{0}{10}{v}{2.5}{12}\vertex{0}{20}\vertexL{6}{15}{w}{9}{15}
  \vertex{0}{0}
}

\end{tikzpicture}
}
\caption{Non-isomorphic lattices with $6$ elements (Part II)}
\label{Fig:Lat_6b}
\end{figure}


\begin{figure}[ht]
\centering
\scalebox{1.0}{
\begin{tikzpicture}[x=0.9mm,y=0.9mm, line width=0.35pt,
  lab/.style={draw=none, fill=none, below=3mm, font=\small}]

\newcommand{\vertex}[2]{\filldraw[black] (#1,#2) circle (1.2mm);}
\newcommand{\vertexL}[5]{\filldraw[black] (#1,#2) circle (1.2mm); \node at (#4,#5) {$#3$};}
\newcommand{\lat}[4]{%
  \begin{scope}[shift={(#1,#2)}]
    #4
    \node[lab] at (0,-3) {$#3$};
  \end{scope}
}


\lat{0}{-124}{K_{6,3}}{
  \draw (0,0)--(-4,10)--(-4,20)--(0,30);
  \draw (0,0)--(4,10)--(4,20)--(0,30);
  \vertex{0}{0}\vertex{-4}{10}\vertex{-4}{20}\vertex{4}{10}\vertex{4}{20}\vertex{0}{30}
}

\lat{22}{-124}{K_{6,4}}{
  \draw (0,0)--(-4,10)--(-4,20)--(0,30);
  \draw (0,0)--(4,10)--(4,20)--(0,30);
  \draw (-4,20)--(4,10);
  \vertex{0}{0}\vertex{-4}{10}\vertex{-4}{20}\vertex{4}{10}\vertex{4}{20}\vertex{0}{30}
}

\lat{44}{-124}{K_{6,5}}{
  \draw (0,0)--(-4,20)--(0,40);
  \draw (0,0)--(4,10)--(4,20)--(4,30)--(0,40);
  \vertex{0}{40}
  \vertexL{-4}{20}{u}{-7}{20}\vertexL{4}{10}{v}{7}{10}\vertex{4}{20}\vertexL{4}{30}{w}{7.5}{30}
  \vertex{0}{0}
}

\lat{66}{-124}{K_{6,6}}{
  \draw (0,10)--(-6,20)--(0,30)--(6,20)--(0,10);
  \draw (0,0)--(0,10);
  \draw (0,30)--(0,40);
  \vertex{0}{0}\vertex{0}{10}\vertex{-6}{20}\vertex{6}{20}\vertex{0}{30}\vertex{0}{40}
}

\lat{88}{-124}{K_{6,7}}{
  \draw (0,0)--(0,8)--(0,16)--(0,24)--(0,32)--(0,40);
  \vertex{0}{0}\vertex{0}{8}\vertex{0}{16}\vertex{0}{24}\vertex{0}{32}\vertex{0}{40}
}

\end{tikzpicture}
}
\caption{Non-isomorphic lattices with $6$ elements (Part III)}
\label{Fig:Lat_6c}
\end{figure}

\begin{figure}[ht]
\centering
\scalebox{1.0}{
\begin{tikzpicture}[
  x=0.9mm,y=0.9mm,
  line width=0.35pt,
  lab/.style={draw=none, fill=none, below=3mm, font=\small}]

\newcommand{\vertex}[2]{\filldraw[black] (#1,#2) circle (1.15mm);}
\newcommand{\vertexL}[5]{\filldraw[black] (#1,#2) circle (1.2mm); \node at (#4,#5) {$#3$};}
\newcommand{\lat}[4]{%
  \begin{scope}[shift={(#1,#2)}]
    #4
    \node[lab] at (0,-3) {$#3$};
  \end{scope}
}


\lat{0}{0}{L_{7,1}}{
  \draw (0,30)--(-10,20)--(0,0)--(10,20)--(0,30);
  \draw (0,30)--(-3,20)--(0,10)--(0,0);
  \draw (0,30)--(3,20)--(0,10);
  \vertex{0}{30}\vertexL{-10}{20}{u}{-13.5}{20}\vertexL{10}{20}{w}{13.5}{20}\vertexL{-3}{20}{v}{-6}{20}\vertex{3}{20}\vertex{0}{10}\vertex{0}{0}
}

\lat{24}{0}{L_{7,1}^d}{
  \draw (0,0)--(-10,10)--(0,30)--(10,10)--(0,0);
  \draw (0,0)--(-3,10)--(0,20)--(0,30);
  \draw (0,0)--(3,10)--(0,20);
  \vertex{0}{0}\vertexL{-10}{10}{u}{-13.5}{10}\vertexL{10}{10}{w}{13.5}{10}\vertexL{-3}{10}{v}{-6}{10}\vertex{3}{10}\vertex{0}{20}\vertex{0}{30}
}

\lat{48}{0}{L_{7,2}}{
  \draw (0,30)--(-10,20)-- (0,0); 
  \draw (0,10)--(10,20)--(0,30);
  \draw (0,30)--(-3,20)--(0,10);
  \draw (0,30)--(3,20)--(0,10);
  \draw (0,10)--(0,0);
  \vertex{0}{30}
  \vertexL{-10}{20}{u}{-13.5}{20}
  \vertexL{-3}{20}{v}{-6}{20}\vertexL{3}{20}{w}{6.5}{20}\vertex{10}{20}
  \vertexL{0}{10}{s}{3}{10}\vertex{0}{0}
}

\lat{72}{0}{L_{7,2}^d}{
  \draw (0,0)--(-10,10)-- (0,30); 
  \draw (0,20)--(10,10)--(0,0);
  \draw (0,0)--(-3,10)--(0,20);
  \draw (0,0)--(3,10)--(0,20);
  \draw (0,20)--(0,30);
  \vertex{0}{0}
  \vertexL{-10}{10}{u}{-13}{10}\vertexL{-3}{10}{v}{-6}{10}\vertexL{3}{10}{w}{6.5}{10}
  \vertex{10}{10}\vertexL{0}{20}{s}{3}{20}\vertex{0}{30}
}

\lat{96}{0}{L_{7,3}}{
  \draw (0,30)--(-8,20)--(-8,10)--(0,0)--(8,10)--(8,20)--(0,30);
  \draw (0,30)--(0,20) -- (8,10);
  \vertex{0}{30}\vertex{-8}{10}\vertex{0}{0}\vertex{8}{10}
  \vertexL{-8}{20}{u}{-11.5}{20}
  \vertexL{0}{20}{v}{-3.5}{20}
  \vertexL{8}{20}{w}{11.5}{20}
}


\lat{0}{-60}{L_{7,3}^d}{
  \draw (0,0)--(-8,10)--(-8,20)--(0,30)--(8,20)--(8,10)--(0,0);
  \draw (0,0)--(0,10)--(8,20);
  \vertex{0}{0}\vertexL{-8}{10}{u}{-11.5}{10}
  \vertex{-8}{20}\vertex{0}{30}\vertex{8}{20}
  \vertexL{0}{10}{v}{-3}{10}\vertexL{8}{10}{w}{11.5}{10}
}

\lat{24}{-60}{L_{7,4}}{
  \draw (0,30)--(-8,20)--(-8,10)--(0,0); 
  \draw (8,20)--(0,30);
  \draw (0,0)--(0,30);
  \draw (0,10)--(8,20); \draw (0,10)--(-8,20);
  \vertex{0}{30}\vertex{-8}{20}\vertex{-8}{10}\vertex{0}{20}\vertex{8}{20}\vertex{0}{10}\vertex{0}{0}
}

\lat{48}{-60}{L_{7,4}^d}{
  \draw (0,0)--(-8,10)--(0,20)--(8,10)--(0,0);
  \draw (-8,10)--(-8,20)--(0,30);
  \draw (0,0)--(0,30);
  \vertex{0}{0}\vertex{-8}{10}\vertex{-8}{20}\vertex{0}{10}\vertex{8}{10}\vertex{0}{20}\vertex{0}{30}
}

\lat{72}{-60}{L_{7,5}}{
  \draw (0,30)--(-8,20)--(-8,10)--(0,0)--(8,10)--(8,20)--(0,30);
  \draw (0,20)--(-8,10);
  \draw (0,20)--(8,10);
  \draw (0,20)--(0,30);
  \vertex{0}{30}\vertex{-8}{20}\vertex{-8}{10}\vertex{0}{20}\vertex{0}{0}\vertex{8}{10}\vertex{8}{20}
}

\lat{96}{-60}{L_{7,5}^d}{
  \draw (0,0)--(-8,10)--(-8,20)--(0,30)--(8,20)--(8,10)--(0,0);
  \draw (0,10)--(-8,20);
  \draw (0,10)--(8,20);
  \draw (0,0)--(0,10);
  \vertex{0}{0}\vertex{-8}{10}\vertex{-8}{20}\vertex{0}{10}\vertex{0}{30}\vertex{8}{20}\vertex{8}{10}
}

\end{tikzpicture}
}
\caption{Non-isomorphic lattices with $7$ elements (Part I)}
\label{Fig:Lat7_part1}
\end{figure}
\begin{figure}[ht]
\centering
\scalebox{1.0}{
\begin{tikzpicture}[x=0.9mm,y=0.9mm,
  line width=0.35pt,
  lab/.style={draw=none, fill=none, below=3mm, font=\small}]

\newcommand{\vertex}[2]{\filldraw[black] (#1,#2) circle (1.15mm);}
\newcommand{\vertexL}[5]{\filldraw[black] (#1,#2) circle (1.2mm); \node at (#4,#5) {$#3$};}
\newcommand{\lat}[4]{%
  \begin{scope}[shift={(#1,#2)}]
    #4
    \node[lab] at (0,-3) {$#3$};
  \end{scope}
}


\lat{0}{0}{L_{7,6}}{
  \draw (0,30)--(-10,20)--(0,10)--(10,20)--(0,30);
  \draw (0,30)--(-3,20)--(0,10);
  \draw (0,30)--(3,20)--(0,10);
  \draw (0,10)--(0,0);
  \vertex{0}{30}
  \vertex{-10}{20}\vertex{-3}{20}\vertex{3}{20}\vertex{10}{20}
  \vertexL{0}{10}{u}{-3}{10}\vertex{0}{0}
}

\lat{24}{0}{L_{7,6}^d}{
  \draw (0,0)--(-10,10)--(0,20)--(10,10)--(0,0);
  \draw (0,0)--(-3,10)--(0,20);
  \draw (0,0)--(3,10)--(0,20);
  \draw (0,30)--(0,20);
  \vertex{0}{0}
  \vertexL{-10}{10}{u}{-13}{10}
  \vertexL{-3}{10}{v}{-6}{10}
  \vertexL{3}{10}{w}{6.5}{10}
  \vertex{10}{10}
  \vertexL{0}{20}{s}{-3}{20}\vertex{0}{30}
}

\lat{48}{0}{L_{7,7}}{
  \draw (0,40)--(-8,20)--(0,0);
  \draw (0,40)--(-3,25)--(0,10);
  \draw (0,40)--(8,30)--(8,20)--(0,10);
  \draw (0,10)--(0,0);
  \vertexL{-8}{20}{u}{-11}{20}
  \vertex{0}{40}\vertex{-3}{25}
  \vertexL{8}{30}{w}{11.5}{30}
  \vertex{8}{20}
  \vertexL{0}{10}{v}{3}{10}\vertex{0}{0}
}

\lat{72}{0}{L_{7,7}^d}{
  \draw (0,0)--(-8,20)--(0,40);
  \draw (0,0)--(-3,15)--(0,30);
  \draw (0,0)--(8,10)--(8,20)--(0,30);
  \draw (0,30)--(0,40);
  \vertexL{-8}{20}{u}{-11}{20}
  \vertex{0}{0}\vertex{-3}{15}\vertexL{8}{10}{v}{11}{10}\vertex{8}{20}
  \vertexL{0}{30}{w}{3.5}{30}\vertex{0}{40}
}

\lat{96}{0}{L_{7,8}}{
  \draw (-8,20)--(0,40)--(0,0)--(-8,20);
  \draw (0,40)--(0,30)--(0,20)--(0,10)--(0,0);
  \draw (0,20)--(6,30)--(0,40);
  \vertex{0}{40}
  \vertexL{-8}{20}{u}{-11}{20}
  \vertexL{0}{30}{w}{-2.5}{27}
  \vertex{6}{30}\vertex{0}{20}\vertexL{0}{10}{v}{3}{10}\vertex{0}{0}
}


\lat{0}{-68}{L_{7,8}^d}{
  \draw (-8,20)--(0,0)--(0,40)--(-8,20);
  \draw (0,0)--(0,10)--(0,20)--(0,30)--(0,40);
  \draw (0,20)--(6,10)--(0,0);
  \vertexL{-8}{20}{u}{-11.5}{20}
  \vertex{0}{0}
  \vertex{0}{10}
  \vertexL{6}{10}{v}{9}{10}
  \vertex{0}{20}
  \vertexL{0}{30}{w}{3.5}{30.5}
  \vertex{0}{40}
}

\lat{24}{-68}{L_{7,9}}{
  \draw (0,0)--(-6,10)--(-6,30)--(0,40);
  \draw (0,0)--(6,10)--(6,30)--(6,30)--(0,40);
  \draw (-6,10)--(6,30);
  \vertex{0}{0}\vertex{-6}{10}\vertex{-6}{30}\vertex{0}{40}
  \vertex{6}{10}\vertex{6}{20}\vertex{6}{30}
}

\lat{48}{-68}{L_{7,9}^d}{
  \draw (0,40)--(-6,30)--(-6,10)--(0,0);
  \draw (0,40)--(6,30)--(6,20)--(6,10)--(0,0);
  \draw (-6,30)--(6,10);
  \vertex{0}{40}\vertex{-6}{30}\vertex{-6}{10}\vertex{0}{0}
  \vertex{6}{30}\vertex{6}{20}\vertex{6}{10}
}

\lat{72}{-68}{L_{7,10}}{
  \draw (0,0)--(-6,10)--(-6,30)--(0,40);
  \draw (0,0)--(6,10)--(6,20)--(6,30)--(0,40);
  \draw (-6,10)--(6,20);
  \vertex{0}{0}\vertex{-6}{10}\vertex{-6}{30}\vertex{0}{40}
  \vertex{6}{10}\vertex{6}{20}\vertex{6}{30}
}

\lat{96}{-68}{L_{7,10}^d}{
  \draw (0,40)--(-6,30)--(-6,10)--(0,0);
  \draw (0,40)--(6,30)--(6,20)--(6,10)--(0,0);
  \draw (-6,30)--(6,20);
  \vertex{0}{40}\vertex{-6}{30}\vertex{-6}{10}\vertex{0}{0}
  \vertex{6}{30}\vertex{6}{20}\vertex{6}{10}
}
\end{tikzpicture}
}
\caption{Non-isomorphic lattices with $7$ elements (Part II)}
\label{Fig:Lat7_part2}
\end{figure}
\begin{figure}[ht]
\centering
\scalebox{1.0}{
\begin{tikzpicture}[x=0.9mm,y=0.9mm,
  line width=0.35pt,
  lab/.style={draw=none, fill=none, below=3mm, font=\small}]

\newcommand{\vertex}[2]{\filldraw[black] (#1,#2) circle (1.15mm);}
\newcommand{\vertexL}[5]{\filldraw[black] (#1,#2) circle (1.2mm); \node at (#4,#5) {$#3$};}
\newcommand{\lat}[4]{%
  \begin{scope}[shift={(#1,#2)}]
    #4
    \node[lab] at (0,-3) {$#3$};
  \end{scope}
}


\lat{0}{0}{L_{7,11}}{
  \draw (0,40)--(-10,25)--(0,10)--(10,25)--(0,40);
  \draw (0,10)--(0,0);
  \draw (0,40)--(0,30)--(0,20)--(0,10);
  \vertex{0}{40}\vertex{-10}{25}\vertex{10}{25}\vertex{0}{30}\vertex{0}{20}
  \vertexL{0}{10}{u}{3}{10}\vertex{0}{0}
}

\lat{24}{0}{L_{7,11}^d}{
  \draw (0,0)--(-10,15)--(0,30)--(10,15)--(0,0);
  \draw (0,30)--(0,40);
  \draw (0,0)--(0,10)--(0,20)--(0,30);
  \vertex{0}{0}\vertexL{-10}{15}{v}{-13}{15}\vertexL{10}{15}{w}{13.5}{15}\vertexL{0}{10}{u}{3}{10}\vertex{0}{20}\vertexL{0}{30}{s}{3}{30}\vertex{0}{40}
}

\lat{48}{0}{L_{7,12}}{
  \draw (0,40)--(-10,25)--(0,10);
  \draw (0,40)--(0,30)--(0,20)--(0,10)--(0,0);
  \draw (0,30)--(10,20)--(0,10);
  \vertex{0}{40}\vertex{-10}{25}\vertex{0}{30}\vertex{0}{20}\vertex{10}{20}\vertex{0}{10}\vertex{0}{0}
}

\lat{72}{0}{L_{7,12}^d}{
  \draw (0,0)--(-10,15)--(0,30);
  \draw (0,40)--(0,30)--(0,20)--(0,10)--(0,0);
  \draw (0,10)--(10,20)--(0,30);
  \vertex{0}{0}
  \vertexL{-10}{15}{u}{-13}{15}\vertex{0}{10}
  \vertexL{0}{20}{v}{-3}{20}
  \vertexL{10}{20}{w}{13.5}{20}\vertex{0}{30}
  \vertex{0}{40}
}

\lat{96}{0}{L_{7,13}}{
  \draw (0,40)--(-10,25)--(0,10);
  \draw (0,40)--(0,30)--(0,20)--(0,10)--(0,0);
  \draw (0,40)--(10,30)--(0,20);
  \vertex{0}{40}\vertex{-10}{25}\vertex{0}{30}\vertex{10}{30}\vertex{0}{20}
  \vertexL{0}{10}{u}{3}{10}\vertex{0}{0}
}


\lat{0}{-68}{L_{7,13}^d}{
  \draw (0,0)--(-10,15)--(0,30);
  \draw (0,40)--(0,30)--(0,20)--(0,10)--(0,0);
  \draw (0,20)--(10,10)--(0,0);
  \vertex{0}{0}\vertex{-10}{15}\vertex{0}{10}\vertex{10}{10}\vertex{0}{20}\vertex{0}{30}\vertex{0}{40}
}

\lat{24}{-68}{L_{7,14}}{
  \draw (0,40)--(-10,30)--(-10,20)--(0,10);
  \draw (0,40)--(10,30)--(10,20)--(0,10);
  \draw (0,10)--(0,0);
  \vertex{0}{40}\vertex{-10}{30}\vertex{-10}{20}\vertex{10}{30}\vertex{10}{20}\vertex{0}{10}\vertex{0}{0}
}

\lat{48}{-68}{L_{7,14}^d}{
  \draw (0,0)--(-10,10)--(-10,20)--(0,30);
  \draw (0,0)--(10,10)--(10,20)--(0,30);
  \draw (0,30)--(0,40);
  \vertex{0}{0}\vertex{-10}{10}\vertex{-10}{20}\vertex{10}{10}\vertex{10}{20}\vertex{0}{30}\vertex{0}{40}
}

\lat{72}{-68}{L_{7,15}}{
  \draw (0,40)--(-10,30)--(-10,20)--(0,10);
  \draw (0,40)--(10,30)--(10,20)--(0,10);
  \draw (0,10)--(0,0);
  \draw (-10,30)--(10,20);
  \vertex{0}{40}\vertex{-10}{30}\vertex{-10}{20}\vertex{10}{30}\vertex{10}{20}\vertex{0}{10}\vertex{0}{0}
}

\lat{96}{-68}{L_{7,15}^d}{
  \draw (0,0)--(-10,10)--(-10,20)--(0,30);
  \draw (0,0)--(10,10)--(10,20)--(0,30);
  \draw (0,30)--(0,40);
  \draw (-10,10)--(10,20);
  \vertex{0}{0}\vertex{-10}{10}\vertex{-10}{20}\vertex{10}{10}\vertex{10}{20}\vertex{0}{30}\vertex{0}{40}
}

\end{tikzpicture}
}
\caption{Non-isomorphic lattices with $7$ elements (Part III)}
\label{Fig:Lat7_part3}
\end{figure}
\begin{figure}[ht]
\centering
\scalebox{1.0}{
\begin{tikzpicture}[x=0.9mm,y=0.9mm,
  line width=0.35pt,
  lab/.style={draw=none, fill=none, below=3mm, font=\small}]

\newcommand{\vertex}[2]{\filldraw[black] (#1,#2) circle (1.15mm);}
\newcommand{\vertexL}[5]{\filldraw[black] (#1,#2) circle (1.2mm); \node at (#4,#5) {$#3$};}
\newcommand{\lat}[4]{%
  \begin{scope}[shift={(#1,#2)}]
    #4
    \node[lab] at (0,-3) {$#3$};
  \end{scope}
}


\lat{0}{0}{L_{7,16}}{
  \draw (0,0)--(0,10)--(0,20);
  \draw (0,20)--(-10,30);
  \draw (0,20)--(0,30);
  \draw (0,20)--(10,30);
  \draw (-10,30)--(0,40);
  \draw (0,30)--(0,40);
  \draw (10,30)--(0,40);
  \vertex{0}{0}\vertex{0}{10}\vertex{0}{20}\vertex{-10}{30}\vertex{0}{30}\vertex{10}{30}\vertex{0}{40}
}

\lat{24}{0}{L_{7,16}^d}{
  \draw (0,40)--(0,30)--(0,20);
  \draw (0,20)--(-10,10);
  \draw (0,20)--(0,10);
  \draw (0,20)--(10,10);
  \draw (-10,10)--(0,0);
  \draw (0,10)--(0,0);
  \draw (10,10)--(0,0);
  \vertex{0}{40}\vertex{0}{30}\vertex{0}{20}\vertex{-10}{10}\vertex{0}{10}\vertex{10}{10}\vertex{0}{0}
}

\lat{48}{0}{L_{7,17}}{
  \draw (0,0)--(0,10);
  \draw (0,10)--(-5,30)--(0,50);
  \draw (0,10)--(5,20)--(5,30)--(5,40)--(0,50);
  \vertex{0}{0}\vertexL{0}{10}{u}{3}{10}\vertex{-5}{30}\vertex{5}{20}\vertex{5}{30}\vertex{5}{40}
  \vertex{0}{50}
}

\lat{72}{0}{L_{7,17}^d}{
  \draw (0,50)--(0,40);
  \draw (0,40)--(-5,20)--(0,0);
  \draw (0,40)--(5,30)--(5,20)--(5,10)--(0,0);
  \vertex{0}{50}\vertex{0}{40}
  \vertexL{-5}{20}{u}{-8}{20}\vertexL{5}{30}{w}{8.5}{30}\vertex{5}{20}\vertexL{5}{10}{v}{8}{10}\vertex{0}{0}
}

\lat{96}{0}{L_{7,18}}{
  \draw (0,0)--(0,10)--(0,20);
  \draw (0,20)--(-6,35)--(0,50);
  \draw (0,20)--(6,30)--(6,40)--(0,50);
  \vertex{0}{0}\vertex{0}{10}\vertex{0}{20}\vertex{-6}{35}\vertex{6}{30}\vertex{6}{40}\vertex{0}{50}
}


\lat{0}{-72}{L_{7,18}^d}{
  \draw (0,50)--(0,40)--(0,30);
  \draw (0,30)--(-6,15)--(0,0);
  \draw (0,30)--(6,20)--(6,10)--(0,0);
  \vertex{0}{50}\vertex{0}{40}\vertex{0}{30}\vertex{-6}{15}\vertex{6}{20}\vertex{6}{10}\vertex{0}{0}
}

\lat{24}{-72}{L_{7,19}}{
  \draw (0,0)--(0,10)--(0,20);
  \draw (0,20)--(-10,30)--(0,40)--(0,50);
  \draw (0,20)--(10,30)--(0,40);
  \vertex{0}{0}\vertex{0}{10}\vertex{0}{20}\vertex{-10}{30}\vertex{10}{30}\vertex{0}{40}\vertex{0}{50}
}

\lat{48}{-72}{L_{7,19}^d}{
  \draw (0,50)--(0,40)--(0,30);
  \draw (0,30)--(-10,20)--(0,10)--(0,0);
  \draw (0,30)--(10,20)--(0,10);
  \vertex{0}{50}\vertex{0}{40}\vertex{0}{30}\vertex{-10}{20}\vertex{10}{20}\vertex{0}{10}\vertex{0}{0}
}

\lat{72}{-72}{L_{7,20}}{
  \draw (0,0)--(0,10)--(0,20)--(0,30);
  \draw (0,30)--(-10,40)--(0,50);
  \draw (0,30)--(10,40)--(0,50);
  \vertex{0}{0}\vertex{0}{10}\vertex{0}{20}\vertex{0}{30}\vertex{-10}{40}\vertex{10}{40}\vertex{0}{50}
}

\lat{96}{-72}{L_{7,20}^d}{
  \draw (0,50)--(0,40)--(0,30)--(0,20);
  \draw (0,20)--(-10,10)--(0,0);
  \draw (0,20)--(10,10)--(0,0);
  \vertex{0}{50}\vertex{0}{40}\vertex{0}{30}\vertex{0}{20}\vertex{-10}{10}\vertex{10}{10}\vertex{0}{0}
}

\end{tikzpicture}
}
\caption{Non-isomorphic lattices with $7$ elements (Part IV)}
\label{Fig:Lat7_part4}
\end{figure}
\begin{figure}[ht]
\centering
\scalebox{1.0}{
\begin{tikzpicture}[x=0.9mm,y=0.9mm,
  line width=0.35pt,
  lab/.style={draw=none, fill=none, below=3mm, font=\small}]

\newcommand{\vertex}[2]{\filldraw[black] (#1,#2) circle (1.15mm);}
\newcommand{\vertexL}[5]{\filldraw[black] (#1,#2) circle (1.2mm); \node at (#4,#5) {$#3$};}
\newcommand{\lat}[4]{%
  \begin{scope}[shift={(#1,#2)}]
    #4
    \node[lab] at (0,-3) {$#3$};
  \end{scope}
}


\lat{0}{0}{K_{7,1}}{
  \draw (0,30)--(-12,15)--(0,0)--(12,15)--(0,30);
  \draw (0,30)--(-6,15)--(0,0);
  \draw (0,30)--(0,15)--(0,0);
  \draw (0,30)--(6,15)--(0,0);
  \vertex{0}{30}\vertexL{-12}{15}{u}{-15}{15}\vertexL{-6}{15}{v}{-9}{15}\vertex{0}{15}\vertex{6}{15}\vertexL{12}{15}{w}{9}{15}\vertex{0}{0}
}

\lat{30}{0}{K_{7,2}}{
  \draw (0,0)--(-9,15)--(0,30);
  \draw (0,0)--(-3,15)--(0,30);
  \draw (0,0)--(3,15)--(0,30);
  \draw (0,0)--(9,10)--(9,20)--(0,30);
  \vertex{0}{0}
  \vertexL{-9}{15}{u}{-12}{15}\vertexL{-3}{15}{v}{-6}{15}\vertexL{3}{15}{w}{6.5}{15}\vertex{9}{10}\vertex{9}{20}\vertex{0}{30}
}

\lat{60}{0}{K_{7,3}}{
  \draw (0,0)--(-8,10)--(-8,20)--(0,30);
  \draw (0,0)--(0,15)--(0,30);
  \draw (0,0)--(8,10)--(8,20)--(0,30);
  \vertex{0}{0}\vertexL{-8}{10}{u}{-11}{10}\vertex{-8}{20}\vertexL{0}{15}{v}{-3}{15}\vertexL{8}{10}{w}{11.5}{10}\vertex{8}{20}\vertex{0}{30}
}

\lat{90}{0}{K_{7,4}}{
  \draw (0,0)--(-8,10)--(-8,20)--(0,30);
  \draw (0,0)--(8,10)--(8,20)--(0,30);
  \draw (-8,10)--(8,20);
  \draw (0,0)--(16,15)--(0,30);
  \vertex{0}{0}\vertex{-8}{10}\vertexL{-8}{20}{u}{-11}{20}\vertexL{8}{10}{v}{5}{10}\vertex{8}{20}\vertex{0}{30}\vertexL{16}{15}{w}{19.5}{15}
}


\lat{0}{-70}{K_{7,5}}{
  \draw (0,40)--(-8,20)--(0,0);
  \draw (0,40)--(8,20)--(0,0);
  \draw (0,40)--(0,30)--(0,20)--(0,10)--(0,0);
  \vertex{0}{40}\vertexL{-8}{20}{u}{-11}{20}\vertex{0}{30}\vertexL{0}{20}{v}{-3}{20}
  \vertex{0}{10}\vertexL{8}{20}{w}{11.5}{20}\vertex{0}{0}
}

\lat{25}{-70}{K_{7,6}}{
  \draw (0,40)--(0,30)--(0,20)--(0,10)--(0,0);
  \draw (-8,20)--(0,30);
  \draw (-8,20)--(0,10);
  \draw (8,20)--(0,40);
  \draw (8,20)--(0,0);
  \vertex{0}{40}\vertex{0}{30}\vertex{-8}{20}\vertex{0}{20}\vertex{8}{20}\vertex{0}{10}\vertex{0}{0}
}

\lat{50}{-70}{K_{7,7}}{
  \draw (0,40)--(-6,30)--(-6,10)--(0,0);
  \draw (0,40)--(6,30)--(6,20)--(6,10)--(0,0);
  \vertex{0}{40}\vertex{-6}{30}\vertexL{-6}{10}{u}{-9}{10}
  \vertexL{6}{30}{w}{9.5}{30}
  \vertex{6}{20}
  \vertexL{6}{10}{v}{9}{10}\vertex{0}{0}
}

\lat{75}{-70}{K_{7,8}}{
  \draw (0,40)--(-6,30)--(-6,10)--(0,0);
  \draw (0,40)--(6,30)--(6,20)--(6,10)--(0,0);
  \draw (-6,10)--(6,30);
  \vertex{0}{40}
  \vertexL{-6}{30}{w}{-9.5}{30}\vertex{-6}{10}\vertex{6}{30}
  \vertexL{0}{20}{v}{-3}{20}
  \vertexL{6}{10}{u}{9}{10}\vertex{0}{0}
}

\lat{100}{-70}{K_{7,9}}{
  \draw (0,0)--(-6,10)--(0,20)--(6,10)--(0,0);
  \draw (0,20)--(-6,30)--(0,40)--(6,30)--(0,20);
  \vertex{0}{0}\vertex{-6}{10}\vertex{6}{10}\vertex{0}{20}\vertex{-6}{30}\vertex{6}{30}\vertex{0}{40}
}


\lat{0}{-140}{K_{7,10}}{
  \draw (0,40)--(0,30)--(0,20)--(0,10)--(0,0);
  \draw (-8,20)--(0,30);
  \draw (-8,20)--(0,10);
  \draw (8,20)--(0,30);
  \draw (8,20)--(0,10);
  \vertex{0}{40}\vertex{0}{30}\vertex{-8}{20}\vertex{0}{20}\vertex{8}{20}\vertex{0}{10}\vertex{0}{0}
}

\lat{30}{-140}{K_{7,11}}{
  \draw (0,0)--(-6,25)--(0,50);
  \draw (0,0)--(6,10)--(6,25)--(6,30)--(6,40)--(0,50);
  \vertex{0}{0}\vertexL{-6}{25}{u}{-9}{25}\vertex{0}{50}
  \vertexL{6}{10}{v}{9}{10}\vertex{6}{20}\vertexL{6}{30}{w}{9.5}{30}\vertex{6}{40}
}

\lat{60}{-140}{K_{7,12}}{
  \draw (0,0)--(0,10);
  \draw (0,10)--(-6,25)--(0,40);
  \draw (0,10)--(6,20)--(6,30)--(0,40);
  \draw (0,40)--(0,50);
  \vertex{0}{0}\vertex{0}{10}\vertex{-6}{25}\vertex{6}{20}\vertex{6}{30}\vertex{0}{40}\vertex{0}{50}
}

\lat{90}{-140}{K_{7,13}}{
  \draw (0,48)--(0,40)--(0,32)--(0,24)--(0,16)--(0,8)--(0,0);
  \vertex{0}{48}\vertex{0}{40}\vertex{0}{32}\vertex{0}{24}
  \vertex{0}{16}\vertex{0}{8}\vertex{0}{0}
}

\end{tikzpicture}
}
\caption{Non-isomorphic self-dual lattices with $7$ elements}
\label{Fig:Lat7_part5}
\end{figure} 


\end{document}